\newtheorem{theorem}{Theorem}[section]
\newtheorem{proposition}[theorem]{Proposition}
\newtheorem{corollary}[theorem]{Corollary} 
\newtheorem{lemma}[theorem]{Lemma} 
\theoremstyle{definition} 
\newtheorem{definition}[theorem]{Definition} 
\numberwithin{equation}{section} 
\newcommand\dl{\partial} 
\renewcommand\l{\ell}
\renewcommand\#{\sharp}
\renewcommand\S{\sum\limits}
\newcommand\la{\alpha}
\newcommand\lb{\beta}
\newcommand\lG{\Gamma}
\newcommand\Le{\epsilon}
\newcommand\lh{\eta}
\newcommand\li{\iota}
\newcommand\lk{\kappa}
\newcommand\lm{\mu}
\newcommand\lp{\pi}
\newcommand\lt{\vartheta}
\newcommand\lT{\Theta}
\newcommand\lo{\omega}
\newcommand\lO{\Omega}
\newcommand\lX{\Xi}
\newcommand\lz{\zeta}
\newcommand\ui{\cap}
\newcommand\xx{\times}
\newcommand\xt{\otimes}
\newcommand\oo{\infty}
\newcommand\oc{\circ}
\newcommand\CL{\mathcal C} 
\newcommand\LL{\mathcal L} 
\newcommand\Cl{{\mathbb C}} 
\newcommand\Gl{{\mathbf G}} 
\newcommand\Rl{{\mathbb R}}
\newcommand\Sl{{\mathbf S}} 
\newcommand\Tl{{\mathbf T}} 
\newcommand\Zl{{\mathbf Z}} 
\newcommand\al{{\boldsymbol a}} 
\newcommand\bl{{\boldsymbol b}} 
\newcommand\cl{{\boldsymbol c}} 
\newcommand\ql{{\boldsymbol q}} 
\newcommand\cc[9]{\begin{pmatrix}{#1}&{#2}&{#3}\\ {#4}&{#5}&{#6}\\ {#7}&{#8}&{#9}\end{pmatrix}}
\renewcommand\d[1]{{\dot{#1}}}
\newcommand\h[1]{{\widehat{#1}}} 
\newcommand\y[1]{{\undertilde{#1}}} 
\renewcommand\t[1]{{\widetilde{#1}}} 
\newcommand\w[1]{{\widetilde{#1}}} 
\renewcommand\u[1]{{\underline{#1}}}
\newcommand\f[2]{{\frac{#1}{#2}}}
\begin{document}

\title[Invariant connections on hermitian symmetric spaces]{A normal variety of invariant 
connections on hermitian symmetric spaces}

\author[I. Biswas]{Indranil Biswas}

\address{School of Mathematics, Tata Institute of Fundamental Research, Homi Bhabha Road, Bombay 
400005, India}

\email{indranil@math.tifr.res.in}

\author[H. Upmeier]{Harald Upmeier}

\address{Fachbereich Mathematik und Informatik, Philipps-Universit\"at Marburg, 
Lahnberge, Hans-Meerwein-Strasse, D-35032 Marburg, Germany}

\email{upmeier@mathematik.uni-marburg.de}

\subjclass[2010]{32M10, 14M17, 32L05}

\keywords{Irreducible hermitian symmetric space; principal bundle; pure connections; normal variety.}

\date{}

\begin{abstract}
We introduce a class of $G$-invariant connections on a homogeneous principal bundle $Q$ over a hermitian symmetric 
space $M\,=\,G/K$. The parameter
space carries the structure of normal variety and has a canonical anti-holomorphic involution. The 
fixed points of the anti-holomorphic involution
are precisely the integrable invariant complex structures on $Q.$ This normal variety is closely related to 
quiver varieties and, more generally, to varieties of commuting matrix tuples
modulo simultaneous conjugation.
\end{abstract}

\maketitle

\tableofcontents

\section{Introduction}

In \cite{BU} we undertook a systematic study of $G$-invariant connections on principal 
$H$-bundles, for a complex algebraic group $H,$ over $G$-homogeneous spaces $G/K,$ in 
particular over hermitian symmetric spaces. The main result there gave an intrinsic ``curvature''
characterization of invariant connections which produce (invariant) integrable complex 
structures. In addition it was shown that this characterization holds in the both compact and 
non-compact hermitian symmetric cases.

In this paper, we introduce the concept of ``pure'' invariant connection (related to the 
curvature condition above) and regard the space of all such pure invariant connections as an 
interesting moduli space in the sense of algebraic geometry. We prove that this moduli space is 
an algebraic variety endowed with a canonical anti-holomorphic involution, and investigate the 
question whether this variety is normal. In dimension 1 (the hermitian
symmetric space is the unit disk) and, more generally, for 
rank 1 (the hermitian symmetric space is a unit ball), this problem is solved very explicitly, by realizing the moduli space as a 
quiver variety. For symmetric spaces of higher rank, we establish a close connection to 
varieties of commuting matrix tuples (modulo joint conjugation).

\section{Principal bundles}\label{sec2}

We refer to \cite{KN}, \cite{H}, \cite{B} for Lie groups and principal bundles.

As this paper is a continuation of our paper \cite{BU} we first recall some concepts and notation from
\cite{BU}. For a Lie group $H$ we consider $\CL^\oo$ principal $H$-bundles $Q$ over a given
manifold $M,$
with projection map $\lp\,:\,Q\,\longrightarrow\, M.$ For
another principal $H'$-bundle $\lp'\,:\,Q'\,\longrightarrow\, M'$, a
bundle morphism $F\, :\,Q\,\longrightarrow\, Q'$ is given by a commuting diagram
$$\xymatrix{Q\ar[r]^F\ar[d]_\lp&Q'\ar[d]^{\lp'}\\ M\ar[r]_g&M'}$$
of smooth maps, such that there exists a Lie group homomorphism $f\,:\,H\,\longrightarrow\, H'$ satisfying
the condition $$F(qh)\,=\,F(q)f(h)$$
for all $q\,\in\, Q,\ h\,\in\, H.$ We say that $F$ is a bundle map over $g,$ associated to
the homomorphism $f.$ If $F$ is a diffeomorphism, $M\,=\,M'$ and 
$g\,=\,id_M,$ we obtain a (bundle) isomorphism. 

Two special cases of bundle maps arise as follows.

\begin{definition} Let $H$ be a complex Lie group, and let $H_\Rl\,\subset\, H$ be a maximal compact
subgroup. A {\it hermitian structure} on a principal
$H$-bundle $Q$ is a subbundle $Q_\Rl\,\subset\, Q$ with structure group $H_\Rl.$ Thus the inclusion map 
$\li\,:\,Q_\Rl\,\hookrightarrow\, Q$ is a bundle morphism 
\begin{equation}\label{14}
\xymatrix{Q_\Rl\ar[rr]^{\li}\ar[dr]_{\lp}&&Q\ar[dl]^{\lp}\\ &M&}
\end{equation}
over $id_M,$ associated to the inclusion $H_\Rl\,\hookrightarrow\, H.$ A morphism $(Q,\,Q_\Rl)
\,\longrightarrow\,(Q',\,Q_\Rl')$ between hermitian structures over $M$ is a morphism $F\,:\,Q\,\longrightarrow\, Q'$
such that $F(Q_\Rl)\,\subset\, Q_\Rl'.$ Putting $F_\Rl\,:=\,F|_{Q_\Rl},$ we have a commuting diagram
$$\xymatrix{Q\ar[ddr]_\lp\ar[rrr]^F&&&Q'\ar[ddl]^{\lp'}\\ &Q_\Rl\ar[d]^\lp\ar[ul]_\li\ar[r]^{F_\Rl}&Q_\Rl'\ar[ur]^{\li'}\ar[d]_{\lp'}&\\ &M\ar[r]_g&M'&}$$
\end{definition}

Let $H$ be a connected reductive affine algebraic group defined over $\mathbb C$, and let $H_\Rl\,\subset
\, H$ be a maximal compact subgroup. Then there exists an anti-automorphism
$h\,\longmapsto\, h^*$ of order 2 on $H$ such that
$$H_\Rl\,=\,\{h\,\in\, H\,\mid\ h^*\,=\,h^{-1}\}\, .$$
So $H_\Rl$
consists of the ``unitary'' elements in $H.$ In other words, the corresponding
antiholomorphic involution $\#$ (automorphism of order 2)
$$h\,\longmapsto\, h^\#\,:=\,{h^*}^{-1}\,=\,{h^{-1}}^*$$
has the fixed point subgroup $H_\Rl.$ Its differential $\u{\#}\,=\,d_e\#$ is an anti-linear involution on the Lie algebra 
${\mathfrak h},$ preserving the Lie bracket operation.

\begin{definition} Let $H$ be a complex reductive group, and let $H_\Rl\,\subset\, H$ be a maximal compact subgroup. A
principal $H$-bundle $Q$ is called 
{\it involutive} if there exists a morphism of order two
\begin{equation}\label{1}\xymatrix{Q\ar[rr]^{\lk}\ar[dr]_{\lp}&&Q\ar[dl]^{\lp}\\ &M&}
\end{equation}
over ${id}_M,$ associated with the involution $h\,\longmapsto\, h^\#$ of $H.$ A morphism between involutive principal bundles $(Q,\lk)$ and 
$(Q',\lk')$ is described by a commuting diagram
$$\xymatrix{Q\ar[rrr]^F\ar[ddr]_\lp\ar[dr]^\lk&&&Q'\ar[ddl]^{\lp'}\ar[dl]_{\lk'}\\ 
&Q\ar[r]^F\ar[d]^\lp&Q'\ar[d]_{\lp'}&\\ &M\ar[r]_g&M'&}$$
\end{definition}

It turns out that these two concepts are equivalent, as shown by the following proposition.

\begin{proposition} For an involutive principal $H$-bundle $(Q,\,\lk),$ the fixed point set
\begin{equation}\label{2}
Q_\Rl\,=\,\{p\,\in\, Q\,\mid\ \lk(p)\,=\,p\}\end{equation}
is a subbundle which defines a hermitian structure on $Q.$ Conversely, every hermitian structure $(Q,\,Q_\Rl)$ induces
a bundle involution $\lk$ on $Q$ with fixed point subbundle $Q_\Rl.$
\end{proposition}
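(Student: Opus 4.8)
The plan is to prove both statements locally and then patch, the whole argument being driven by the twisted equivariance $\lk(q\,h)\,=\,\lk(q)\,h^\#$ together with $\#^2\,=\,{id}$. For the first assertion I would fix a connected open set $U\,\subset\, M$ over which $Q$ is trivial, choose a local section $s\,:\,U\,\longrightarrow\, Q$, and write $\lk\oc s\,=\,s\cdot\phi$ for a smooth $\phi\,:\,U\,\longrightarrow\, H$ (possible since $\lk$ lies over ${id}_M$). From $\lk^2\,=\,{id}_Q$ and $\lk(q h)\,=\,\lk(q)h^\#$ one gets $s\cdot(\phi\,\phi^\#)\,=\,\lk^2\oc s\,=\,s$, hence $\phi\,\phi^\#\,=\,e$, equivalently $\phi^*\,=\,\phi$. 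The key step is then to replace $s$ by a section with values in $Q_\Rl$: I look for a smooth $\psi\,:\,U\,\longrightarrow\, H$ with $\phi\,=\,\psi\,(\psi^\#)^{-1}\,=\,\psi\,\psi^*$, because then $\t s\,:=\,s\cdot\psi$ satisfies $\lk\oc\t s\,=\,s\cdot(\phi\,\psi^\#)\,=\,s\cdot\psi\,=\,\t s$, so $\t s$ is a section of $Q_\Rl$. Such a ``square root'' is extracted from the Cartan (polar) decomposition $H\,=\,H_\Rl\cdot\exp{\mathfrak p}$, where ${\mathfrak p}\,=\,\{Y\,\in\,{\mathfrak h}\,\mid\,\u{\#}(Y)\,=\,-Y\}$: since $\exp$ is a diffeomorphism of ${\mathfrak p}$ onto the set of ``positive'' elements of $H$, one puts $\psi\,:=\,\exp\big(\tfrac12\log\phi\big)$, which depends smoothly on $x$ and satisfies $\psi^*\,=\,\psi$, hence $\psi\,\psi^*\,=\,\phi$. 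This produces local sections of $Q_\Rl$, so $Q_\Rl$ is a submanifold surjecting onto $M$; and if $\t s,\,\t s'$ are two local sections of $Q_\Rl$ with $\t s'\,=\,\t s\cdot u$, then $\lk\oc\t s'\,=\,\t s'$ forces $u^\#\,=\,u$, i.e. $u$ is $H_\Rl$-valued, so $Q_\Rl$ has structure group $H_\Rl$ and defines a hermitian structure on $Q$.

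For the converse, given a hermitian structure $(Q,\,Q_\Rl)$ I would define $\lk\,:\,Q\,\longrightarrow\, Q$ by $\lk(q\,h)\,:=\,q\,h^\#$ for $q\,\in\, Q_\Rl,\ h\,\in\, H$. Every $p\,\in\, Q$ is of this form, since $Q_\Rl\,\longrightarrow\, M$ is onto with fibres the $H_\Rl$-orbits; and if $q\,h\,=\,q'\,h'$ with $q,\,q'\,\in\, Q_\Rl$, then $q'\,=\,q\,u$ and $h'\,=\,u^{-1}h$ for some $u\,\in\, H_\Rl$, whence $q'(h')^\#\,=\,q\,u\,u^{-1}h^\#\,=\,q\,h^\#$, using $u^\#\,=\,u$; so $\lk$ is well defined. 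It is smooth because near each point it is $q\,h\,\mapsto\, q\,h^\#$ for a fixed local section $q$ of $Q_\Rl$, and the relations $\lk(p\,h')\,=\,\lk(p)h'^\#$, $\lk^2\,=\,{id}_Q$ and $\lp\oc\lk\,=\,\lp$ follow at once from $\#$ being an automorphism with $\#^2\,=\,{id}$. Finally $\lk(q\,h)\,=\,q\,h$ iff $h^\#\,=\,h$ iff $h\,\in\, H_\Rl$ iff $q\,h\,\in\, Q_\Rl$, so the fixed-point subbundle of $\lk$ is exactly $Q_\Rl$, and the two constructions are inverse to each other by the uniqueness of the coset description in the first part.

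The only non-formal ingredient is the local solvability of $\phi\,=\,\psi\,\psi^*$ in the first part, and this is where I expect the actual work to lie: it rests on the Cartan/polar decomposition of the reductive group $H$ and on the smoothness of the square root on the submanifold $\exp{\mathfrak p}$ of positive elements. One should also check that $\phi(x)$ indeed lands in $\exp{\mathfrak p}$ for every $x$ — this is a ``signature''-type statement that is locally constant in $x$, hence constant on the connected base $M$, and is forced by the hypothesis that $(Q,\,\lk)$ is involutive. Everything else in the proposition is bookkeeping with $\lk(q h)\,=\,\lk(q)h^\#$ and $\#^2\,=\,{id}$.
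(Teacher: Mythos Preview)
Your argument parallels the paper's closely: where the paper works pointwise (writing $\lk q=qk$, deducing $kk^\#=e$, then factoring $k=h(h^\#)^{-1}$ so that $qh\in Q_\Rl$), you run the same computation through a local section, and your converse $\lk(qh):=qh^\#$ for $q\in Q_\Rl$ is exactly the paper's formula \eqref{3} rewritten. The one substantive step in either version is the factorization $\phi=\psi\psi^*$ (equivalently $k=hh^*$), and you are right to isolate it.

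However, your proposed resolution of that step does not hold. The relation $\phi\,\phi^\#=e$ you derive says only that $\phi$ is self-adjoint ($\phi^*=\phi$), not that $\phi\in\exp{\mathfrak p}$, and involutivity alone does not force positivity. Over a one-point base with $H=\Cl^*$, $H_\Rl={\rm U}(1)$, $h^\#=\overline h^{\,-1}$, the map $\lk(q)=-\,\overline q^{\,-1}$ satisfies $\lk(qh)=\lk(q)\,h^\#$ and $\lk^2={id}$, yet $Q_\Rl=\{q:|q|^2=-1\}=\emptyset$ and $\phi=-1\notin\exp{\mathfrak p}=\Rl_{>0}$. So neither your square-root construction nor the paper's bare assertion ``hence there exists $h$ with $k=h(h^\#)^{-1}$'' goes through without an additional hypothesis --- for instance that $Q_\Rl$ meets some fibre and $M$ is connected, after which your locally-constant argument does finish the job. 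In the paper's applications the involutions are all of the explicit form \eqref{7}, where $Q_\Rl$ is visibly nonempty, so the gap is harmless there; but as a general statement the caveat is needed.
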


\begin{proof}
Consider the free action $Q\xx H\,\longrightarrow\, Q$ with quotient $Q/H\,=\,M.$ For
$p\,\in\, Q_\Rl,\, \l\,\in\, H_\Rl$ we have
$$\lk(p\l)\,=\,\lk(p)\l^\#\,=\,p\l$$
since $\lk$ is a bundle map and $Q_\Rl$ and $H_\Rl$ are fixed pointwise under the involutions
$\lk$ and $\#$ respectively. Thus we have a (free) action 
$Q_\Rl\xx H_\Rl\,\longrightarrow\, Q_\Rl.$ Clearly, $p\sim p'$ modulo $H_\Rl$ implies $p\,\sim\, p'$ modulo $H,$ so
there is a map $Q_\Rl/H_\Rl\,\longrightarrow\, Q/H.$ This map is injective, since $p'\,=\,ph$ for some $h
\,\in\, H$ implies that $$ph\,=\,p'\,=\,\lk(p')\,=\,\lk(ph)\,=\,\lk(p)h^\#\,=\, ph^\#\, .$$ Therefore,
we have $h\,=\,h^\#\in H_\Rl.$

In order to show that the above map $Q_\Rl/H_\Rl\,\longrightarrow\, Q/H$ is
surjective, let $q\,\in \,Q$ be arbitrary. Since $\lk$ preserves
base points, there exists a unique $k\,\in\, H$ such that 
$\lk q\,=\,qk.$ It follows that 
$$q\,=\,\lk(\lk q)\,=\,\lk(qk)\,=\,(\lk q)k^\#\,=\,qkk^\#\, .$$
Therefore, $kk^\#\,=\,1$ and $k\,=\,(k^\#)^{-1}\,\in\, H$ is ``unitary''. Hence there exists $h\,
\in\, H$ such that $k\,=\,h(h^\#)^{-1}.$ For this element $h$ we have
$\lk(qh)\,=\,(\lk q)h^\#\,=\,qkh^\#\,=\,qh.$ Hence $p\,:=\,qh\,\in\, Q_\Rl$ and $p\,\sim\, q$ mod $H.$ This
shows that $Q_\Rl/H_\Rl\,=\,Q/H.$ Thus every involutive principal 
$H$-bundle $Q$ gives rise to a hermitian structure.

Conversely, let $(Q,\,Q_\Rl)$ be a hermitian structure and let $q\,\in\, Q$ be arbitrary. Since $Q/H
\,=\,Q_\Rl/H_\Rl$ there exists $h\,\in\, H$ such that
$p\,:=\,qh\,\in\, Q_\Rl.$ Define a map $\lk\,:\,Q\,\longrightarrow\, Q$ by
\begin{equation}\label{3}
\lk q\,:=\,p(h^\#)^{-1}\,=\,qh(h^\#)^{-1}\, .
\end{equation}
In order to show that \eqref{3} is independent of the choice of $h,$ let $qh_1\,\in\, Q_\Rl$ for some $h_1\,\in\, H.$
Since $qh$ and $qh_1$ have the same base point in $M,$ there exists $\l\,\in\, H_\Rl$ such that $qh_1
\,=\,qh\l.$ Therefore, $h_1\,=\,h\l$, and hence 
$$h_1(h_1^\#)^{-1}\,=\,h\l((h\l)^\#)^{-1}\,=\,h\l(h^\#\l)^{-1}\,=\,h(h^\#)^{-1}\, .$$
It is now straightforward to show that $q\,\longmapsto\,\lk q$ is a bundle isomorphism of $Q$ of order $2$
whose fixed point locus is the subbundle $Q_\Rl.$
\end{proof}

\section{Equivariant principal $H$-bundles}

Let $G$ be a real Lie group acting on $M$ via transformations $L_g^M,\ g\in G.$ 

\begin{definition}\label{def-2} A $\CL^\oo$ principal $H$-bundle over $M$ is called {\it equivariant} (under $G$) if
$Q$ carries a smooth (left) action 
$g\,\longmapsto\, L_g^Q$ of $G$ which commutes with the right action of $H$ on $Q,$ such that for each $g\,\in\, G$
the diagram
\begin{equation}\label{37}\xymatrix{Q\ar[r]^{L_g^Q}\ar[d]_\lp&Q\ar[d]^\lp\\ M\ar[r]_{L_g^M}&M}\end{equation}
commutes. Thus $L_g^Q$ is a bundle morphism over $L_g^M,$ associated with $id_H.$ A bundle morphism 
$F\,:\,Q\,\longrightarrow\, Q'$ between equivariant $H$-bundles is called equivariant if it
intertwines the actions of $G\xx H.$
\end{definition}

\begin{definition} For a complex Lie group $H$ an involutive $H$-bundle $(Q,\,\lk)$ is called
{\it equivariant}, if $Q$ is equivariant and each $L_g^Q$ is an isomorphism of the involutive bundle $(Q,\,\lk),$ i.e.,
$$L_g^Q\oc\lk\,=\,\lk\oc L_g^Q$$
for every $g\,\in\, G.$ An equivalent condition is that the involution $\lk$ is an equivariant morphism.

Similarly, a hermitian structure 
$(Q,\,Q_\Rl)$ is called {\it equivariant}, if $Q$ is equivariant and each $L_g^Q$ is an isomorphism of
the hermitian structure $(Q,\,Q_\Rl),$ i.e., satisfies the condition
$L_g^QQ_\Rl\,=\,Q_\Rl.$ Then $Q_\Rl$ becomes an equivariant bundle under the restricted action 
$L_g^{Q_\Rl}\,:=\,L_g^Q|_{Q_\Rl},$ for which the inclusion map $\li\,:\,Q_\Rl\,\hookrightarrow\, Q$ is
an equivariant bundle morphism.
\end{definition}

Equivariant morphisms between equivariant involutive $H$-bundles or equivariant hermitian structures are defined in a natural way.

We shall apply these concepts to the homogeneous case, in particular when $M\,=\,G/K$ is an irreducible
symmetric space. For any $g\,\in\, G$, let
\begin{equation}\label{4}
L_g^M\,:\,G/K\, \longrightarrow\, G/K
\end{equation}
be the left translation defined by $L_g^M(g'K)\,:=\,gg'K$. In the homogeneous case an equivariant principal 
$H$-bundle $Q$ (and, similarly, an equivariant hermitian structure $(Q,\,Q_\Rl)$ or an involutive $H$-bundle 
$(Q,\,\lk)$) will also be called \textit{homogeneous}. By \cite[Theorem 1.2]{BU}, all equivariant homogeneous
principal $H$-bundles, 
up to equivariant isomorphism, are classified by the set $\mbox{Hom}(K,\, H)/H,$ by associating to a Lie group 
homomorphism $f\,:\,K\,\longrightarrow\, H,$ up to $H$-conjugacy, the homogeneous principal $H$-bundle 
\begin{equation}\label{5}
Q\,=\,G\xx_{K,f}H\,=\,\{[g:\,h]=[gk:\,f(k)^{-1}h]\,\mid\ g\,\in\, G,\ h\,\in\, H,\ k\,\in 
\,K\},\end{equation}
which is equivariant under the (well-defined) action 
\begin{equation}\label{8}
L_g^Q[g':\,h]\,:=\,[gg':\,h]\, .\end{equation}
More precisely, for any choice of base 
point $\ql\,\in \,Q_o$ in the fiber over $o\,=\,eK\in G/K,$ the homomorphism $f\,:\,K \,\longrightarrow\, H$ is uniquely 
determined by the condition
\begin{equation}\label{6}k\ql\,=\,\ql f(k)\end{equation}
for all $k\,\in\, K$; another choice $\ql'\,=\,\ql h^{-1}\,\in \,Q_o$ induces the conjugate homomorphism
$I_h^H\oc f,$ where 
$$I_h^Hh'\,=\,hh'h^{-1}$$ 
denotes the inner automorphism.

Similarly, all equivariant hermitian structures $(Q_\Rl,\,Q),$ up to equivariant isomorphism, are
parametrized by the discrete set 
$\mbox{Hom}(K,\,H_\Rl)/H_\Rl,$ mapping the $H_\Rl$-conjugacy class of a homomorphism $f\,:\,K
\,\longrightarrow\, H_\Rl$ to the equivariant $H_\Rl$-bundle
$$Q_\Rl\,=\,G\xx_{K,f}H_\Rl\,\subset\, Q\,=\,G\xx_{K,f}H$$
regarded as a subbundle of \eqref{5}. Since for any homomorphism $f\,:\,K\,\longrightarrow\, H,$ the image
is contained in a maximal compact subgroup 
$H_\Rl,$ and for reductive Lie groups all maximal compact subgroups $H_\Rl\,\subset\, H$ are conjugate, the two
classifying sets can be identified, i.e., any homogeneous $H$-bundle $Q$ admits a hermitian structure
$Q_\Rl\,\subset\, Q,$ which is unique after specifying the maximal compact subgroup $H_\Rl.$

\begin{lemma}
Let $H$ be a complex reductive group, and let $H_\Rl\,\subset\, H$ be a maximal compact subgroup. Then the
associated equivariant bundle $Q\,=\,G\xx_{K,f}H$ is involutive with respect to $H_\Rl,$ under the involution 
\begin{equation}\label{7}
\lk[g:\,h]\,:=\,[g:\,h^\#]
\end{equation}
for all $g\,\in\, G,\ h\,\in\, H.$ The associated hermitian structure has the fixed point subbundle
$$Q_\Rl\,=\,G\xx_{K,f}H_\Rl.$$
\end{lemma}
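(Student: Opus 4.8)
The plan is to verify directly the four conditions that make \eqref{7} a bundle involution of the type \eqref{1}: that $\lk$ is well defined on the quotient $Q\,=\,G\xx_{K,f}H$; that it covers $id_M$ and intertwines the right $H$-action with itself via the automorphism $h\,\longmapsto\, h^\#$; that $\lk\oc\lk\,=\,id_Q$; and that its fixed-point set is $G\xx_{K,f}H_\Rl$. The last sentence of the lemma then follows by feeding this $\lk$ into the Proposition above, whose associated hermitian structure is by construction the fixed-point subbundle of $\lk$.

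First I would record the two facts about $\#$ that are needed. Since $h\,\longmapsto\, h^*$ is an anti-automorphism of order $2$, one has $(h_1h_2)^\#\,=\,h_1^\#h_2^\#$, so $\#$ is a genuine automorphism, of order $2$, and $h^\#\,=\,h$ exactly for $h\,\in\, H_\Rl$. Next, as recalled after \eqref{6}, the image $f(K)$ of the classifying homomorphism lies in some maximal compact subgroup, and all such subgroups of $H$ are conjugate; so after replacing $f$ by an $H$-conjugate — which changes $Q$ only by an equivariant isomorphism — I may assume $f(K)\,\subseteq\, H_\Rl$. This is exactly the normalization already in force when $Q_\Rl\,=\,G\xx_{K,f}H_\Rl$ is formed as a subbundle of \eqref{5}, so no generality is lost; in particular $f(k)^\#\,=\,f(k)$ for all $k\,\in\, K$.

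With this in hand, well-definedness is the computation
$$\lk[gk:\,f(k)^{-1}h]\,=\,[gk:\,(f(k)^{-1}h)^\#]\,=\,[gk:\,f(k)^{-1}h^\#]\,=\,[g:\,h^\#]\,=\,\lk[g:h]\, ,$$
using that $\#$ is an automorphism and $f(k)^\#\,=\,f(k)$. Then $\lp\oc\lk\,=\,\lp$ is clear, and for $h'\,\in\, H$ one has $\lk([g:h]h')\,=\,\lk[g:\,hh']\,=\,[g:\,h^\#(h')^\#]\,=\,(\lk[g:h])(h')^\#$, so $\lk$ is a bundle morphism over $id_M$ associated with $h\,\longmapsto\, h^\#$, and $\lk\oc\lk\,=\,id_Q$ because $(h^\#)^\#\,=\,h$; that $\lk$ commutes with each $L_{g_0}^Q$ is immediate from \eqref{8}. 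For the fixed points, $\lk[g:h]\,=\,[g:h]$ means $[g:\,h^\#]\,=\,[g:\,h]$, which by the defining relation in \eqref{5} forces $k\,=\,e$ and $h^\#\,=\,h$, i.e. $h\,\in\, H_\Rl$; hence $\{p\in Q\mid \lk(p)=p\}\,=\,\{[g:h]\mid g\,\in\, G,\ h\,\in\, H_\Rl\}\,=\,G\xx_{K,f}H_\Rl$, which is the asserted $Q_\Rl$.

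I do not expect a real obstacle here: the argument is bookkeeping with the equivalence relation defining $G\xx_{K,f}H$. The one point that genuinely must be used — and without which even the statement ``$Q_\Rl\,=\,G\xx_{K,f}H_\Rl$'' would fail to make sense, since $f(k)^{-1}H_\Rl$ need not lie in $H_\Rl$ otherwise — is the normalization $f(K)\,\subseteq\, H_\Rl$, which is precisely what makes \eqref{7} descend to the quotient.
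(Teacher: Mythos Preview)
Your proof is correct and follows essentially the same approach as the paper's: the key step is the well-definedness computation using $f(k)^\#=f(k)$, which you carry out identically. You are in fact more thorough than the paper, which only writes out well-definedness and $G$-equivariance explicitly and leaves the remaining checks (covering $id_M$, compatibility with the right action via $\#$, $\lk\oc\lk=id_Q$, and the fixed-point identification) to the reader.
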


\begin{proof} For all $k\,\in\, K$ we have $f(k)\,\in\, H_\Rl$, and hence $f(k)^\#\,=\,f(k).$ It follows that
$$\lk[gk:\,f(k)^{-1}h]\,=\,[gk:\,(f(k)^{-1}h)^\#]\,=\,[gk:\,f(k)^{-1}h^\#]\,=\,[g:\,h^\#].$$
Therefore the map in \eqref{7} is well-defined. It is clear that it commutes with the $G$-action on $Q$
in \eqref{8}. 
\end{proof}

While the principal bundle in \eqref{5} depends only on the homomorphism $f\,:\,K\,\longrightarrow\, H,$ the base 
point $\ql$ satisfying \eqref{6} is not unique. Define a subgroup
\begin{equation}\label{9}
H^f\,:=\,\{\lh\in H\,\mid\,\ f(k)\lh\,=\,\lh 
f(k)\quad\forall k\in K\}\,=\,\{\lh\in H\,\mid\ I^H_\lh\oc f\,=\,f\}.
\end{equation}
Then for any $\lh\,\in\, H^f$, the point 
$\ql'\,:=\,\ql\lh$ also satisfies \eqref{6}, because $$k\ql'\,=\,k\ql\lh
\,=\,\ql f(k)\lh\,=\,\ql\lh f(k)\,=\,\ql'f(k)$$ for all $k\,\in\, K.$ For 
$f\,:\,K\,\longrightarrow\, H_\Rl$ we put
$$H_\Rl^f\,:=\,H_\Rl\ui H^f.$$

For $x\,\in \,M$ we define the evaluation map $R_x^M\,:\,G\,\longrightarrow\, M$ by
$$R_x^M(g)\,:=\,L_g^M(x)\,=\,gx\, .$$
Via the map $R_o^M\,:\,G\,\longrightarrow\, M$ the group $G$ can be regarded as a principal
$K$-bundle over $M\,=\,G/K,$ which is equivariant under the left translation action
$$L_g^Gg'\,:=\,gg'$$
for $g,\,g'\,\in\, G.$ This follows from the commuting diagram
$$\xymatrix{G\ar[r]^{L_g^G}\ar[d]_{R_o^M}&G\ar[d]^{R_o^M}\\ M\ar[r]_{L_g^M}&M}.$$

Sometimes we write $\mathbf G$ to emphasize the principal $K$-bundle structure. Now consider the homogeneous $K$-bundle
$$G\xx_K K\,:=\,\{[g:\,k]\,=\,[gk_1:\,k_1^{-1}k]\,\mid\ g\,\in\, G,\ k,\,k_1\,\in\, K\}$$
which is equivariant under the left action
$$g\cdot [g',\,k]\,=\,[gg',\,k]$$
for all $g,\,g'\,\in\, G$ and $k\,\in\, K.$ Then the map
$$G\,\ni\, g\,\longmapsto\,[g:\,e_H]\,=\,[gk:\,k^{-1}]\,\in\, G\xx_K K$$
defines an equivariant isomorphism of principal $K$-bundles
$$\xymatrix{G\ar[rr]\ar[dr]_{R_o^M}&&G\xx_K K\ar[dl]^\lp\\ &G/K&}$$

\begin{lemma} For a homomorphism $f\,:\,K\,\longrightarrow\, H$, and $\lh\,\in\, H^f$, define
$$f_\lh(g)\,:=\,[g:\,\lh]\,=\,g[e:\,\lh]$$ 
for all $g\,\in\, G.$ Then $f_\lh$ is an equivariant bundle map 
$$\xymatrix{G\ar[dr]_{R^M_o}\ar[rr]^{f_\lh}&&G\xx_{K,f}H\ar[dl]^\lp\\ &M&}$$ 
over the identity map of $M\,=\,G/K,$ associated with the homomorphism $f.$
\end{lemma}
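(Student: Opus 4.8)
The plan is to verify the three required properties directly from the explicit formula $f_\lh(g)\,=\,[g:\,\lh]$, using only the definitions of the bundle structure on $Q\,=\,G\xx_{K,f}H$ (the equivalence relation $[g:\,h]\,=\,[gk:\,f(k)^{-1}h]$), the $G$-action \eqref{8}, and the identification of $G$ with the principal $K$-bundle $G\xx_K K$ described just above. First I would check that $f_\lh$ covers the identity of $M$: the projection $\lp[g:\,\lh]$ is by definition the coset $gK$, which equals $R_o^M(g)\,=\,gK$, so the triangle commutes on the nose with no choices involved. Second, I would check $G$-equivariance: for $g_0\,\in\, G$ one has $f_\lh(L_{g_0}^G g)\,=\,f_\lh(g_0 g)\,=\,[g_0 g:\,\lh]\,=\,L_{g_0}^Q[g:\,\lh]\,=\,L_{g_0}^Q f_\lh(g)$, where the middle equality is precisely \eqref{8}. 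This is immediate.

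The one step requiring actual care — and the only place the hypothesis $\lh\,\in\, H^f$ is used — is showing that $f_\lh$ is a bundle map associated with the homomorphism $f$, i.e.\ that $f_\lh(g\cdot k)\,=\,f_\lh(g)\cdot f(k)$ for all $g\,\in\, G$ and $k\,\in\, K$, where the left $\cdot k$ denotes the right action of $K$ on $G$ as a principal $K$-bundle (namely $g\mapsto gk$ under the identification $G\,\cong\,G\xx_K K$) and the right $\cdot f(k)$ is the principal $H$-action on $Q$. Unwinding: $f_\lh(gk)\,=\,[gk:\,\lh]$, while $f_\lh(g)\cdot f(k)\,=\,[g:\,\lh]\cdot f(k)\,=\,[g:\,\lh f(k)]$. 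Using the defining relation $[gk:\,\lh]\,=\,[g:\,f(k)\lh]$, the claim reduces to $[g:\,f(k)\lh]\,=\,[g:\,\lh f(k)]$, which holds because the condition $\lh\,\in\, H^f$ in \eqref{9} says exactly $f(k)\lh\,=\,\lh f(k)$ for all $k\,\in\, K$. So the two expressions agree and $f_\lh$ intertwines the structure-group actions via $f$.

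There is essentially no obstacle here: the lemma is a formal consequence of the equivalence relation defining $Q$ together with the commutation defining $H^f$. The only thing worth stating carefully is the bookkeeping between the two models of the $K$-bundle $G$ — working in the $G\xx_K K$ picture makes the right $K$-action transparent, so I would phrase the computation there (or equivalently note $g\mapsto[g:\,e_H]$ sends the right $K$-action to $[g:\,e_H]\mapsto[gk:\,e_H]\,=\,[g:\,f(k)e_H]$-style manipulations) and then transport back. After these three verifications the statement is proved; no normality, variety structure, or analytic input is needed at this stage.
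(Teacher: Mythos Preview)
Your proof is correct and follows the same approach as the paper. The paper's proof consists of exactly the chain of equalities you give for the structure-group compatibility, namely $f_\lh(gk)=[gk:\lh]=[g:f(k)\lh]=[g:\lh f(k)]=[g:\lh]f(k)=f_\lh(g)f(k)$, and leaves the verification that $f_\lh$ covers $id_M$ and is $G$-equivariant implicit; you have simply spelled these out.
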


\begin{proof} This follows from the identity
$$f_\lh(gk)\,=\,[gk:\,\lh]\,=\,[g:\,f(k)\lh]\,=\,[g:\,\lh f(k)]\,=\,[g:\,\lh]f(k)\,=\,f_\lh(g)f(k)$$
for all $g\,\in\, G,\ k\,\in\, K.$
\end{proof}

In case $f\,:\,K\,\longrightarrow\, H_\Rl$, and $\lh\,\in\, H_\Rl^f$, we obtain a similar $G$-equivariant bundle morphism 
$$\xymatrix{G\ar[dr]_{R^M_o}\ar[rr]^{f_\lh}&&G\xx_{K,f}H_\Rl\ar[dl]^\lp\\ &M&}$$
Thus the various choices of base point in $Q$ correspond to suitable bundle embeddings of $G$ into $Q.$

\begin{proposition} For every $\lh\,\in\, H^f$ the map
$$L_\lh^Q[g\,:h]\,:=\,[g:\,\lh h]$$
defines a bundle automorphism $L_\lh^Q$ of $Q\,=\,G\xx_{K,f}H$ satisfying $L_h^Q\ql\,=\,\ql'.$ 
\end{proposition}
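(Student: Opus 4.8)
The plan is to check, in order, that the formula $L_\lh^Q[g:h]:=[g:\lh h]$ is well defined on equivalence classes, that the resulting map is a bundle automorphism, and that it sends $\ql$ to $\ql'$.

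The first and only delicate point is well-definedness. Evaluating the formula on the two representatives $[g:h]=[gk:f(k)^{-1}h]$ of a single point of $Q$ gives $[g:\lh h]$ and $[gk:\lh f(k)^{-1}h]$; since $[g:\lh h]=[gk:f(k)^{-1}\lh h]$, these coincide for all $k\,\in\, K$ exactly when $\lh$ commutes with every $f(k)$, i.e.\ when $\lh\,\in\, H^f$ as defined in \eqref{9}. This is precisely the hypothesis, and it is the one place where it is used; everything else is formal.

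I would then observe that $L_\lh^Q$ is induced by the smooth map $(g,h)\,\longmapsto\,(g,\lh h)$ of $G\xx H$, hence is smooth, and that its inverse is $L_{\lh^{-1}}^Q$ (well defined by the same computation, since $H^f$ is a subgroup, so $\lh^{-1}\,\in\, H^f$); therefore $L_\lh^Q$ is a diffeomorphism. It covers $id_M$ because $\lp[g:\lh h]\,=\,gK\,=\,\lp[g:h]$, and it commutes with the right $H$-action since $L_\lh^Q([g:h]h')\,=\,[g:\lh hh']\,=\,(L_\lh^Q[g:h])h'$; hence it is a bundle isomorphism of $Q$ over $id_M$ associated with $id_H$, that is, a bundle automorphism. (The same kind of computation shows $L_\lh^Q\oc L_g^Q\,=\,L_g^Q\oc L_\lh^Q$ for the action \eqref{8}, so $L_\lh^Q$ is even equivariant, but this is not needed for the statement.)

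Finally, with the base point $\ql\,=\,[e:e_H]$ — which satisfies \eqref{6}, as $k\ql\,=\,[k:e_H]\,=\,[e:f(k)]\,=\,\ql f(k)$ — one gets $L_\lh^Q\ql\,=\,[e:\lh e_H]\,=\,[e:\lh]\,=\,\ql\lh\,=\,\ql'$, the asserted identity (the subscript in the statement should read $\lh$). Since the argument consists entirely of direct verifications, there is no real obstacle beyond getting the well-definedness computation right, which is exactly what singles out the subgroup $H^f$.
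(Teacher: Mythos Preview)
Your proof is correct and follows essentially the same approach as the paper: the paper's argument is just the well-definedness check $f(k)^{-1}\lh h=\lh f(k)^{-1}h$ followed by the direct computation $L_\lh^Q[e:e_H]=[e:\lh]=\ql\lh=\ql'$. You have simply spelled out in more detail the intermediate verifications (smoothness, inverse, compatibility with $\lp$ and the right $H$-action) that the paper leaves implicit in the phrase ``well-defined bundle map.''
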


\begin{proof}
For $k\,\in\, K$ we have $f(k)^{-1}\lh h\,=\,\lh f(k)^{-1}h.$ This shows that $L_h^Q$ is a well-defined bundle
map. Moreover,
$$L_h^Q\ql\,=\,L_h^Q[e:\,e_H]\,=\,[e:\,h]\,=\,[e:\,e_H]h\,=\,\ql h\,=\,\ql'\, ,$$
and the proposition is proved.
\end{proof}

There is a commuting diagram
$$\xymatrix{&Q\ar[dd]^{L_\lh^Q}\\ G\ar[ur]^{f_{\lh'}}\ar[dr]_{f_{\lh\lh'}}&\\ &Q}$$

\section{Connections}

For a given principal $H$-bundle $Q$ over $M,$ we consider {\bf connections} $\lT$ on $Q$.
The space of all connections on $Q$ will be denoted by $\CL(Q)$.
For a connection $\lT\, \in\, \CL(Q)$, let $T_q^\lT Q\,\subset\, T_qQ$ be the horizontal
tangent subspace at $q\,\in \,Q$.

We identify a connection on $Q$ with its {\bf connection 1-form} the total space of $Q$,
which is a pseudo-tensorial ${\mathfrak h}$-valued 1-form 
$$T_qQ\,\ni\, X\,\longmapsto\,\lT_qX\,\in\,{\mathfrak h},\ \ q\,\in\, Q$$
on $Q$, which is $H$-equivariant and coincides with the Maurer-Cartan form on the fibers
of $Q$ \cite[Section II.1]{KN}, \cite{A}, \cite{K}. Let 
$\lO^j_H(Q,\,{\mathfrak h})$ denote the vector space of all tensorial ${\mathfrak h}$-valued $j$-forms on $Q$ that
are $H$-equivariant. Given
a fixed connection $\lT^0\,\in\, \CL(Q)$ on $Q,$ all other connections on $Q$ are of the form
\begin{equation}\label{10}
\lT^\lo_q\,:=\,\lT^0_q+\lo_q,\quad q\,\in\, Q\, ,
\end{equation}
where $\lo\,\in\,\lO^1_H(Q,\,{\mathfrak h}).$ In short, we have
$\CL(Q)\,=\,\lT^0+\lO^1_H(Q,\,{\mathfrak h}).$ This notation depends on the choice of $\lT^0.$ Let 
$\u\lT\,\in\,\lO^2_H(Q,\,{\mathfrak h})$ denote the {\it curvature} of $\lT.$ By \cite[Section II.5]{KN}, for each
$\lO\in\lO^j_H(Q,\,{\mathfrak h})$ there exists a unique $j$-form $\w\lO$ on $M$ of type $Ad_H,$ i.e., with values in the homogeneous vector bundle
$$Q\xx_H{\mathfrak h}\,=\,\{[q:\,\lb]\,=\,[qh:\,Ad_{h^{-1}}^H\lb]\,\mid\ q\,\in \,Q,\ \lb\,\in\,{\mathfrak h},\ h\,\in\, H\}\, ,$$
such that
$$\w\lO_{\lp(q)}((d_q\lp)X_1,\,\cdots ,\,(d_q\lp)X_j)\,=\,[q:\,\lO_q(X_1,\,\cdots ,\,X_j)]$$
for all $q\,\in\, Q$ and $X_i\,\in \, T_qQ$, where $1\,\le\, i\,\le\, j.$ We call $\w\lO$ the
{\bf associated bundle-valued} $j$-form.

We shall often use the concept of {\bf induced connection}. Let
$$\xymatrix{Q\ar[r]^F\ar[d]_\lp&Q'\ar[d]^{\lp'}\\ M\ar[r]_g&M'}$$
be a principal bundle map over a diffeomorphism $g\,:\,M\,\longrightarrow\, M',$ associated to a Lie group homomorphism $f\,:\,H\,\longrightarrow\, H'.$ Then any connection 
$\lT$ on $Q$ induces a connection, denoted by $F_*\lT,$ on $Q',$ such that the bundle map $F$ preserves
the respective horizontal subspaces. By \cite[Proposition II.6.1]{KN} the connection and curvature forms
of $\lT$ and $F_*\lT$ are related by
\begin{equation}\label{11}
F^*(F_*\lT)\,=\,\u f\oc\lT
\end{equation}
\begin{equation}\label{12}
F^*(\u{F_*\lT})\,=\, \u f\oc\u\lT\, ,
\end{equation}
where $F^*$ denotes the pull-back of differential forms, and $\u f\,=\,d_ef\,:\,{\mathfrak h}\,\longrightarrow\,{\mathfrak h}'$
is the differential of $f$ at the unit element $e\,\in\, H.$ 

For a hermitian structure $(Q,\,Q_\Rl),$ we also consider ${\mathfrak h}_\Rl$-valued connection 1-forms $\lX$ on $P.$
Given a fixed connection 
$\lX^0$ on $Q_\Rl,$ all other connections on $Q_\Rl$ have the form
\begin{equation}\label{13}
\lX^\lo_p\,:=\,\lX^0_p+\lo_p,\quad p\,\in\, Q_\Rl\, ,
\end{equation}
where $\lo\,\in\,\lO^1_{H_\Rl}(Q_\Rl,\,{\mathfrak h}_\Rl).$ Thus we have
$\CL(Q_\Rl)\,=\,\lX^0+\lO^1_{H_\Rl}(Q_\Rl,\,{\mathfrak h}_\Rl).$
Under the inclusion map in \eqref{14} any connection $\lX$ on $Q_\Rl$ induces a unique connection
$\li_*\lX$ on $Q$ preserving horizontal subspaces.
Thus we have a natural map $\li_*\,:\,\CL(Q_\Rl)\,\longrightarrow\,\CL(Q)$ whose image
$$\CL_\Rl(Q)\,:=\,\li_*\big(\CL(Q_\Rl)\big)$$ 
consists of what are known as {\bf hermitian} connections on $Q.$ 

\begin{proposition} Let $\lT^0\,:=\,\li_*\lX^0.$ Then
$$\li_*\lX^\lo\,=\,\lT^{\u\li\oc\lo}$$
for any $\lo\,\in\,\lO^1_{H_\Rl}(Q_\Rl,\,{\mathfrak h}_\Rl),$ where $\li\,:\,H_\Rl\,\hookrightarrow\, H$ is the
inclusion map, with differential $\u\li\,:\,{\mathfrak h}_\Rl\,\hookrightarrow\,{\mathfrak h}.$ The associated bundle-valued 1-forms satisfy
$$\w{\u\li\oc\lo}\,=\,\w\li\oc\w\lo\, ,$$
where the vector bundle map $\w\li\,:\,Q_\Rl\xx_{H_\Rl}{\mathfrak h}_\Rl\,\longrightarrow\, Q\xx_H{\mathfrak h}$ is defined by 
$$\w\li[p,\,\lb]\,=\,[\li p,\,\u\li\lb]$$ 
for all $p\,\in\, Q_\Rl\,\subset\, Q$ and $\lb\,\in\,{\mathfrak h}_\Rl\,\subset\,{\mathfrak h}.$
\end{proposition}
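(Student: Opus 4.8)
The plan is to begin by pinning down the meaning of the symbol $\u\li\oc\lo$ on the right-hand side. Since $\li\,:\,H_\Rl\,\hookrightarrow\,H$ is a Lie group homomorphism, its differential $\u\li$ intertwines the adjoint actions, $\u\li\oc Ad^{H_\Rl}_h\,=\,Ad^H_h\oc\u\li$ for $h\,\in\, H_\Rl$; hence, for $\lo\,\in\,\lO^1_{H_\Rl}(Q_\Rl,\,{\mathfrak h}_\Rl)$, the composite $\u\li\oc\lo$ is a tensorial, $H_\Rl$-equivariant ${\mathfrak h}$-valued $1$-form on $Q_\Rl.$ Because every point of $Q$ is of the form $ph$ with $p\,\in\, Q_\Rl$ and $h\,\in\, H$, such a form extends uniquely to an element of $\lO^1_H(Q,\,{\mathfrak h})$; this is the form we still denote $\u\li\oc\lo.$ The same remark, together with $\li(ph)\,=\,\li(p)h$ for $h\,\in\, H_\Rl$, shows that the map $\w\li\,:\,Q_\Rl\xx_{H_\Rl}{\mathfrak h}_\Rl\,\longrightarrow\, Q\xx_H{\mathfrak h}$, $[p,\,\lb]\,\longmapsto\,[\li p,\,\u\li\lb]$, is well defined.

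For the first identity I would apply \eqref{11} to the bundle map $F\,=\,\li$ over $id_M$ associated with the inclusion $\li\,:\,H_\Rl\,\hookrightarrow\, H.$ As $\li$ is an embedding, $\li^*$ is simply restriction to $Q_\Rl$, so \eqref{11} reads $(\li_*\lX)|_{Q_\Rl}\,=\,\u\li\oc\lX$ for every $\lX\,\in\,\CL(Q_\Rl).$ Substituting $\lX\,=\,\lX^\lo\,=\,\lX^0+\lo$ and using linearity gives
$$(\li_*\lX^\lo)|_{Q_\Rl}\,=\,\u\li\oc\lX^0+\u\li\oc\lo\,=\,(\lT^0)|_{Q_\Rl}+(\u\li\oc\lo)|_{Q_\Rl}\,=\,(\lT^{\u\li\oc\lo})|_{Q_\Rl}\, .$$
Both $\li_*\lX^\lo$ and $\lT^{\u\li\oc\lo}$ are connection $1$-forms on $Q$: each restricts to the Maurer--Cartan form on the $H$-vertical directions and each is $H$-equivariant. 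Since for $q\,\in\, Q_\Rl$ one has $T_qQ\,=\,T_qQ_\Rl+V_q$, where $V_q\,=\,\ker(d_q\lp)$ is the full vertical subspace, the displayed equality forces the two forms to coincide on $TQ|_{Q_\Rl}$; then $H$-equivariance, together with the fact that every point of $Q$ lies in $Q_\Rl\cdot H$, forces them to coincide everywhere. This yields $\li_*\lX^\lo\,=\,\lT^{\u\li\oc\lo}.$

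For the second identity I would argue directly from the defining property of the associated bundle-valued form. Fix $x\,\in\, M$ and write $x\,=\,\lp(p)$ with $p\,\in\, Q_\Rl$; every tangent vector at $x$ has the form $(d_p\lp)X_1$ with $X_1\,\in\, T_pQ_\Rl.$ Because $\li$ is the inclusion, $d_p\li$ is the inclusion $T_pQ_\Rl\,\hookrightarrow\, T_pQ$, so the definition of the associated form gives
$$\w{\u\li\oc\lo}_x\big((d_p\lp)X_1\big)\,=\,[p:\,(\u\li\oc\lo)_p(X_1)]\,=\,[p:\,\u\li(\lo_p(X_1))]\, ,$$
while on the other side
$$\w\li\big(\w\lo_x((d_p\lp)X_1)\big)\,=\,\w\li\big([p:\,\lo_p(X_1)]\big)\,=\,[\li p:\,\u\li(\lo_p(X_1))]\,=\,[p:\,\u\li(\lo_p(X_1))]\, .$$
These coincide for all such $X_1$, hence $\w{\u\li\oc\lo}\,=\,\w\li\oc\w\lo.$

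The only genuinely delicate point is the first step: making precise that $\u\li\oc\lo$ defines an element of $\lO^1_H(Q,\,{\mathfrak h})$ --- i.e. the unique extension of a tensorial form across the reduction of structure group $Q_\Rl\,\subset\, Q$ --- and the parallel well-definedness of $\w\li.$ Once \eqref{11} is invoked, which for an embedded reduction amounts to mere restriction, and once one observes that a connection $1$-form on $Q$ is determined by its restriction to $Q_\Rl$, the remainder is routine verification.
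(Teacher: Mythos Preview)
The paper states this proposition without proof, so there is no argument to compare against; your task was effectively to supply one. Your proof is correct: invoking \eqref{11} for the bundle inclusion $\li$ to get $(\li_*\lX)|_{Q_\Rl}=\u\li\oc\lX$, then using that a connection $1$-form on $Q$ is determined by its values along $Q_\Rl$ (via the decomposition $T_pQ=T_pQ_\Rl+V_p$ and $H$-equivariance), is exactly the natural route and matches the spirit in which \eqref{11} is set up in the paper. The careful remarks on why $\u\li\oc\lo$ extends uniquely to an element of $\lO^1_H(Q,{\mathfrak h})$ and why $\w\li$ is well defined are appropriate, and the pointwise verification of $\w{\u\li\oc\lo}=\w\li\oc\w\lo$ is straightforward and correct.
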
 

If $H$ is a complex Lie group, then $Q\xx_H{\mathfrak h}$ is a complex vector bundle over $M.$ Let $(Q,\,\lk)$ be an involutive 
$H$-bundle. Given a connection $\lT$ on $Q$ we define the connection
$$\lT^\#\,:=\,\lk_*\lT$$
on $Q$ induced by the involution in \eqref{1}. We have ${\lT^\#}^\#\,=\,\lT$ and thus obtain a mapping $\lT\,\longmapsto
\,\lT^\#$ of order two on $\CL(Q).$ 

\begin{proposition} If $\lT^0\,=\,(\lT^0)^\#,$ then 
$$(\lT^\lo)^\#\,=\,\lT^{\u\#\oc\lo}$$
for any $\lo\,\in\,\lO^1_H(Q,\,{\mathfrak h}),$ where $\#$ is the involution on $H$, with differential $\u\#
\,=\,d_e\#$ on ${\mathfrak h}.$ The associated bundle-valued 1-forms satisfy
$$\w{\u\#\oc\lo}\,=\,\w\lk\oc\w\lo\, ,$$ 
where the (anti-linear) vector bundle map $\w\lk$ on $Q\xx_H{\mathfrak h}$ is defined by 
\begin{equation}\label{16}
\w\lk[q:\,\lb]\,:=\,[\lk q:\,\u\#\lb]
\end{equation}
for all $q\,\in\, Q$ and $\lb\,\in\,{\mathfrak h}.$ 
\end{proposition}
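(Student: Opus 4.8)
The plan is to use that $\lk$ is a bundle isomorphism of $Q$ \emph{of order two}, covering $id_M$ and associated with the involution $\#$ of $H$, so that the induced--connection identity \eqref{11} applies verbatim with $F\,=\,\lk$ and $f\,=\,\#$. The strategy is: (i) compute $\lk_*$ explicitly as an $\Rl$-affine map of $\CL(Q)$; (ii) feed in the hypothesis $(\lT^0)^\#\,=\,\lT^0$ to read off the perturbation $\lo\,\longmapsto\,(\lT^\lo)^\#-\lT^0$; and (iii) translate the resulting formula to the level of associated bundle-valued forms, where the map $\w\lk$ of \eqref{16} enters.

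First I would apply \eqref{11} to $F\,=\,\lk$, $f\,=\,\#$, obtaining $\lk^*(\lk_*\lT)\,=\,\u\#\oc\lT$ for every $\lT\,\in\,\CL(Q)$; applying $\lk^*$ once more and using $\lk\oc\lk\,=\,id_Q$ (hence $(\lk^*)^2\,=\,id$) gives
$$\lT^\#\,=\,\lk_*\lT\,=\,\u\#\oc(\lk^*\lT)\, .$$
Since $\lk^*$ is linear on forms and $\u\#$ is $\Rl$-linear, the map $\lT\,\longmapsto\,\lT^\#$ is $\Rl$-affine on $\CL(Q)\,=\,\lT^0+\lO^1_H(Q,\,{\mathfrak h})$ with linear part $\lo\,\longmapsto\,\u\#\oc\lk^*\lo$. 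Therefore $(\lT^\lo)^\#\,=\,(\lT^0)^\#+\u\#\oc\lk^*\lo$, and the hypothesis $(\lT^0)^\#\,=\,\lT^0$ collapses this to
$$(\lT^\lo)^\#\,=\,\lT^0+\u\#\oc\lk^*\lo\, .$$
Being a difference of two connections on $Q$, the form $\u\#\oc\lk^*\lo$ automatically lies in $\lO^1_H(Q,\,{\mathfrak h})$; this is precisely the form denoted $\u\#\oc\lo$ in the statement, the notation being read exactly as ``$\u\li\oc\lo$'' in the previous proposition tacitly denotes the (re)equivariantized form rather than the naive pointwise composition.

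For the bundle-valued assertion I would first check that $\w\lk[q:\,\lb]\,:=\,[\lk q:\,\u\#\lb]$ of \eqref{16} is well defined and fibrewise anti-linear: using $\lk(qh)\,=\,\lk(q)h^\#$ and the automorphism identity $\u\#\oc Ad_h\,=\,Ad_{h^\#}\oc\u\#$ one gets $\w\lk[qh:\,\lb]\,=\,\w\lk[q:\,Ad_h\lb]$, and anti-linearity is inherited from $\u\#$. Then, fixing $q\,\in\, Q$, $X\,\in\, T_qQ$ and putting $x\,=\,\lp(q)\,=\,\lp(\lk q)$, the relation $\lp\oc\lk\,=\,\lp$ gives $(d_q\lp)X\,=\,(d_{\lk q}\lp)(d_q\lk)X$, so by the defining property of the associated form, applied at the point $\lk q$,
$$\w\lo_x((d_q\lp)X)\,=\,\w\lo_x\big((d_{\lk q}\lp)(d_q\lk)X\big)\,=\,[\lk q:\,\lo_{\lk q}((d_q\lk)X)]\, .$$
Applying $\w\lk$ and using $\lk(\lk q)\,=\,q$ yields $\w\lk\big(\w\lo_x((d_q\lp)X)\big)\,=\,[q:\,\u\#(\lo_{\lk q}((d_q\lk)X))]$, which by definition of $\u\#\oc\lk^*\lo$ is exactly $[q:\,(\u\#\oc\lk^*\lo)_q(X)]\,=\,\w{\u\#\oc\lo}_x((d_q\lp)X)$. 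Hence $\w{\u\#\oc\lo}\,=\,\w\lk\oc\w\lo$.

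The only genuine subtlety, and the point I would be most careful about, is the notational/conceptual one just flagged: ``$\u\#\oc\lo$'' cannot denote the naive pointwise composition, because $\lk\oc R_h\,=\,R_{h^\#}\oc\lk$ forces that form to be equivariant only for the $\#$-twisted right action of $H$; interpreting it as $\u\#\oc\lk^*\lo$ (equivalently, as the unique element of $\lO^1_H(Q,\,{\mathfrak h})$ whose associated bundle form is $\w\lk\oc\w\lo$) makes every assertion correct. Once this is pinned down the proof is just the bookkeeping of the two compensating $\#$-twists (one from $\lk$, one from $Ad_H$-equivariance, cancelling via $\#\oc\#\,=\,id$) together with the standard pull-back identity $\lp\oc\lk\,=\,\lp$; no further input is needed.
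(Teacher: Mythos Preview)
The paper states this proposition without proof, so there is no argument to compare against. Your proof is correct and complete.

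You have, moreover, put your finger on the one genuine subtlety the paper glosses over: the symbol $\u\#\oc\lo$ cannot be the naive pointwise composition, since for $\lo\in\lO^1_H(Q,{\mathfrak h})$ one has $(R_h)^*(\u\#\circ\lo)=\u\#\circ Ad_{h^{-1}}\circ\lo=Ad_{(h^\#)^{-1}}\circ(\u\#\circ\lo)$, which is the wrong equivariance. Your reading of $\u\#\oc\lo$ as $\u\#\circ\lk^*\lo$ restores $H$-equivariance (the two $\#$-twists cancel, exactly as you say) and is forced by the requirement that $(\lT^\lo)^\#-\lT^0$ lie in $\lO^1_H(Q,{\mathfrak h})$. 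This is also consistent with how the companion symbol $\u\li\oc\lo$ in the preceding proposition must be read (there as the $H$-equivariant extension from $Q_\Rl$ to $Q$). The derivation via \eqref{11} together with $\lk\circ\lk=id_Q$, and the pointwise check of $\w{\u\#\oc\lo}=\w\lk\circ\w\lo$ using $\lp\circ\lk=\lp$, are exactly what is needed.
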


\begin{proposition}
A connection $\lT$ on an involutive principal $H$-bundle is hermitian, i.e.,
it is of the form $\lT\,=\,\li_*\lX$ for some connection 
$\lX$ on the fix point bundle $Q_\Rl\,\subset\, Q,$ if and only if $\lT^\#\,=\,\lT.$ Thus
$$\CL_\Rl(Q)\,=\,\li_*\CL(Q_\Rl)\,=\,\{\lT\in\CL(Q)\,\mid\ \lT^\#\,=\,\lT\}\, .$$ 
\end{proposition}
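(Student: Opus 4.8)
The plan is to prove both inclusions of the claimed set equality, using the order-two map $\lT\,\longmapsto\,\lT^\#$ together with the description of $\CL_\Rl(Q)$ as the image of $\li_*$. First I would fix a hermitian base connection, i.e. choose $\lX^0\,\in\,\CL(Q_\Rl)$ and set $\lT^0\,:=\,\li_*\lX^0$; by the proposition relating $\li_*$ and $\#$ we need to know that $\lT^0$ is $\#$-invariant, so I would first check $(\lT^0)^\#\,=\,\lT^0$. This should follow because the restriction of $\lk$ to $Q_\Rl$ is the identity, so $\lk\oc\li\,=\,\li$ and hence $\lk_*\li_*\lX^0\,=\,\li_*\lX^0$; more carefully, $\li_*\lX^0$ is characterized by preserving horizontal subspaces under $\li$, and since $\lk$ fixes $Q_\Rl$ pointwise and is a bundle map over $id_M$ associated to $\#$, the connection $\lk_*(\li_*\lX^0)$ also makes $\li$ horizontal-subspace-preserving, so by uniqueness it equals $\li_*\lX^0$.

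With the base connection $\#$-invariant, I would invoke the earlier Proposition stating $(\lT^\lo)^\#\,=\,\lT^{\u\#\oc\lo}$ for $\lo\,\in\,\lO^1_H(Q,\,{\mathfrak h})$. Thus $\lT^\lo$ is $\#$-invariant if and only if $\u\#\oc\lo\,=\,\lo$, i.e. $\lo$ is pointwise fixed by the anti-linear involution $\u\#$ on ${\mathfrak h}$, which means $\lo$ takes values in the fixed-point subalgebra ${\mathfrak h}_\Rl$. So the $\#$-fixed connections are exactly $\lT^0\,+\,\{\lo\,\in\,\lO^1_H(Q,\,{\mathfrak h})\,\mid\,\lo\text{ is }{\mathfrak h}_\Rl\text{-valued}\}$. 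For the inclusion $\CL_\Rl(Q)\,\subset\,\{\lT^\#\,=\,\lT\}$: any hermitian connection is $\li_*\lX^{\lo'}$ for some $\lo'\,\in\,\lO^1_{H_\Rl}(Q_\Rl,\,{\mathfrak h}_\Rl)$, which by the earlier proposition equals $\lT^{\u\li\oc\lo'}$; the form $\u\li\oc\lo'$ is visibly ${\mathfrak h}_\Rl$-valued (the image of $\u\li$ is ${\mathfrak h}_\Rl\,\subset\,{\mathfrak h}$), hence $\#$-fixed by the computation just made.

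For the reverse inclusion $\{\lT^\#\,=\,\lT\}\,\subset\,\CL_\Rl(Q)$: given a $\#$-invariant connection $\lT\,=\,\lT^\lo$ with $\lo$ being ${\mathfrak h}_\Rl$-valued, I want to exhibit $\lo$ as $\u\li\oc\lo'$ for some $\lo'\,\in\,\lO^1_{H_\Rl}(Q_\Rl,\,{\mathfrak h}_\Rl)$, and then conclude $\lT\,=\,\li_*\lX^{\lo'}\,\in\,\CL_\Rl(Q)$. The natural candidate is $\lo'\,:=\,\li^*\lo$, the pull-back to $Q_\Rl$. I would need to verify that $\li^*\lo$ lands in $\lO^1_{H_\Rl}(Q_\Rl,\,{\mathfrak h}_\Rl)$: it is ${\mathfrak h}_\Rl$-valued since $\lo$ is, it is tensorial (pull-back of a tensorial form is tensorial), and it is $H_\Rl$-equivariant because $\lo$ is $H$-equivariant and $\u{Ad}_h$ preserves ${\mathfrak h}_\Rl$ for $h\,\in\,H_\Rl$. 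Then $\u\li\oc\li^*\lo\,=\,\lo$ as a form on $Q$: on $Q_\Rl$ this is immediate, and on all of $Q$ it follows because an $H$-equivariant tensorial form is determined by its restriction to $Q_\Rl$ (every $H$-orbit meets $Q_\Rl$, and equivariance propagates the values; the pseudo-tensorial parts agree since both $\lo$ and $\u\li\oc\li^*\lo$ arise the same way). Hence $\lT\,=\,\lT^{\u\li\oc\li^*\lo}\,=\,\li_*\lX^{\li^*\lo}$, completing the proof. I expect the main obstacle to be this last point — justifying rigorously that the $\#$-fixed condition forces $\lo$ to be the extension of a genuine ${\mathfrak h}_\Rl$-valued connection form on $Q_\Rl$ (in particular that $\lo$ restricted to $Q_\Rl$ is still tensorial and that the extension back to $Q$ recovers $\lo$), rather than just ${\mathfrak h}_\Rl$-valued at points of $Q$; this is essentially the statement that $\li_*$ is a bijection onto the ${\mathfrak h}_\Rl$-valued $H$-equivariant forms, which may warrant a short lemma or an appeal to the identification $Q\,\cong\,Q_\Rl\xx_{H_\Rl}H$.
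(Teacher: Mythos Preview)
Your argument is correct and follows the same route as the paper: use $\lk\oc\li=\li$ for the forward implication, and for the converse write $\lT=\lT^0+\lo$ with $\lT^0=\li_*\lX^0$, reduce $\lT^\#=\lT$ to a condition on $\lo$, and recover $\lo$ from a form $\lo_\Rl$ on $Q_\Rl$. You in fact give more detail than the paper, which simply asserts ``Therefore $\lo=\u\li\oc\lo_\Rl$ for some $\lo_\Rl\in\lO^1_{H_\Rl}(Q_\Rl,{\mathfrak h}_\Rl)$'' without constructing $\lo_\Rl$.

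One point to tighten: in the paper's Proposition the expression ``$\u\#\oc\lo$'' is not literal post-composition by $\u\#$ (that operation would destroy $H$-equivariance), but rather the tensorial form $p\mapsto \u\#\circ\lo_{\lk p}\circ d_p\lk$ induced by the bundle map $\lk$. Consequently the equation $\u\#\oc\lo=\lo$ does \emph{not} say that $\lo$ takes values in ${\mathfrak h}_\Rl$ at every point of $Q$; at a general point $q=ph$ with $p\in Q_\Rl$ the values lie in $Ad_{h^{-1}}{\mathfrak h}_\Rl$. What is true is that at points of $Q_\Rl$ one has $\lk p=p$ and, since $\lo$ is tensorial and $\lp\oc\lk=\lp$, the pullback by $d_p\lk$ drops out, so the condition there reduces to $\u\#(\lo_p(Y))=\lo_p(Y)$, i.e.\ $\lo_p(Y)\in{\mathfrak h}_\Rl$. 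That is exactly what you need to conclude $\li^*\lo\in\lO^1_{H_\Rl}(Q_\Rl,{\mathfrak h}_\Rl)$, and the rest of your extension-back argument (every $H$-orbit meets $Q_\Rl$, so an $H$-equivariant tensorial form is determined by its restriction) is fine. So replace ``$\lo$ takes values in ${\mathfrak h}_\Rl$'' by ``$\lo$ takes values in ${\mathfrak h}_\Rl$ along $Q_\Rl$'' and the proof is complete.
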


\begin{proof} Since $\lk\oc\li\,=\,\li$, it follows that
$$(\li_*\lX)^\#\,=\,\lk_*(\li_*\lX)\,=\,(\lk\oc\li)_*\lX\,=\,\li_*\lX\, .$$
Thus a hermitian connection $\lT$ satisfies the condition
$\lT^\#\,=\,\lT$.

Conversely, the condition $\lk_*\lT\,=\,\lT$ implies that $\lT\,=\,\lT^\lo$ for some 
$\lo\,\in\,\lO^1_H(Q,\,{\mathfrak h})$ satisfying $\lo\,=\,\u\#\oc\lo.$ Therefore, $\lo\,=\,\u\li\oc\lo_\Rl$ for
some $\lo_\Rl\,\in\,\lO^1_{H_\Rl}(Q_\Rl,\,{\mathfrak h}_\Rl).$ Setting $\lX\,:=\,\lX^{\lo_\Rl}$ we conclude
that $\li_*\lX\,=\,\lT$.
\end{proof}

We now introduce the central concept of this paper.

\begin{definition} Suppose that $M$ is a complex manifold and $H$ is a complex Lie group. We say that $\lO
\,\in\,\lO^2_H(Q,\,{\mathfrak h})$ is 
{\it pure} if for every $z\,\in\, M$, the associated bundle-valued 2-form $\w\lO_z$ is of type $(1,\,
1)$, i.e., its (unique) $\mathbb C$-bilinear extension $\w\lO_z\,:\,T_z^{\mathbb C} M\bigwedge
T_z^{\mathbb C} M\,
\longrightarrow\,(Q\xx_H{\mathfrak h})_z$ satisfies the condition
\begin{equation}\label{15}
\w\lO_z(v_1,\,v_2)\,=\,0\,=\,\w\lO_z(\overline{v}_1,\,\overline{v}_2)
\end{equation}
for all holomorphic tangent vectors $v_1,\,v_2\,\in\, T^{1,0}_zM$ and all anti-holomorphic tangent vectors
$\overline{v}_1,\,\overline{v}_2\,\in \,T^{0,1}_zM$. A connection $\lT$ on a principal
$H$-bundle $Q$ over $M$ is called {\it pure}, if its curvature form $\u\lT$ is pure. 
\end{definition}

Let
\begin{equation}\label{pc}
\y\CL(Q)\,\subset\,\CL(Q)
\end{equation}
denote the set of all pure connections on $Q.$ 

\begin{proposition} Let $(Q,\,\lk)$ be an involutive principal
$H$-bundle over a complex manifold $M.$ If a connection $\lT$ on $Q$ is pure, then 
the connection $\lT^\#$ is also pure.
\end{proposition}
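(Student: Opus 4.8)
The statement to prove is: if $(Q,\lk)$ is involutive over a complex manifold $M$ and $\lT$ is a pure connection on $Q$, then $\lT^\# = \lk_*\lT$ is also pure. The plan is to trace the purity condition through the relation between the curvature of $\lT^\#$ and the curvature of $\lT$, using formula \eqref{12} for the induced connection applied to the bundle map $F = \lk$.

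First I would recall from \eqref{12} that since $\lT^\# = \lk_*\lT$ and $\lk$ is a bundle map over $id_M$ associated to the antiholomorphic involution $\#$, the curvatures satisfy $\lk^*(\u{\lT^\#}) = \u\# \oc \u\lT$. Passing to the associated bundle-valued $2$-forms, this translates (as in the Proposition preceding \eqref{16}, applied to $\lO = \u\lT$) into the identity $\w{\u{\lT^\#}} = \w\lk \oc \w{\u\lT} \oc (\text{something})$; more precisely, for $v_1, v_2 \in T_z^{\Cl}M$ one gets a relation of the form
$$\w{\u{\lT^\#}}_z(v_1, v_2) \,=\, \w\lk\big( \w{\u\lT}_z(\o{v_1}, \o{v_2}) \big),$$
where the bars appear because $\lk$ covers $id_M$ but the comparison of the $\Cl$-bilinear extensions of a pseudotensorial form and its pullback under a map whose differential on the Lie algebra is anti-linear forces conjugation of the tangent arguments. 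The key point is that $\w\lk$ is an \emph{anti-linear} bundle map on $Q\xx_H{\mathfrak h}$, and $M$ is a fixed complex manifold, so $\lk$ being over $id_M$ means $d\lk$ acts on $T^{\Cl}_zM$ — I must check carefully whether it acts as the identity or as complex conjugation on tangent vectors; since $\lk$ is a smooth (not holomorphic) self-map of $Q$ lying over $id_M$, its differential on $T_zM$ (real tangent space) is the identity, hence on $T^{\Cl}_zM$ it is $\Cl$-linear and equal to the identity, so in fact
$$\w{\u{\lT^\#}}_z(v_1, v_2) \,=\, \w\lk\big( \w{\u\lT}_z(v_1, v_2) \big).$$

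Now the conclusion is immediate: suppose $\lT$ is pure, so $\w{\u\lT}_z(v_1,v_2) = 0$ for all $v_1, v_2 \in T^{1,0}_zM$ and $\w{\u\lT}_z(\o v_1, \o v_2) = 0$ for all $\o v_1, \o v_2 \in T^{0,1}_zM$. Applying $\w\lk$ (which is additive, being anti-linear, hence in particular maps $0$ to $0$) to both vanishing expressions and using the displayed identity gives $\w{\u{\lT^\#}}_z(v_1, v_2) = 0 = \w{\u{\lT^\#}}_z(\o v_1, \o v_2)$, i.e., $\w{\u{\lT^\#}}_z$ is again of type $(1,1)$ for every $z \in M$. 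Hence $\lT^\#$ is pure.

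The main obstacle is bookkeeping rather than conceptual: one must be precise about how $\w\lk$ interacts with the $\Cl$-bilinear extension of $\w{\u\lT}_z$. Because $\w\lk$ is anti-linear on the fibers $(Q\xx_H{\mathfrak h})_z$ but the differential of $\lk$ on the base is $\Cl$-linear (indeed the identity, as $\lk$ is over $id_M$), the anti-linearity is absorbed entirely on the Lie-algebra side and does \emph{not} swap $T^{1,0}$ with $T^{0,1}$ — this is exactly why purity (invariance of the $(1,1)$-type condition) is preserved, and why the two separate vanishing conditions in \eqref{15} each map to themselves rather than being interchanged. I would state this carefully, perhaps as a one-line lemma that for any $\lo \in \lO^2_H(Q,{\mathfrak h})$ one has $\w{\u\#\oc\lo}_z = \w\lk \oc \w\lo_z$ as maps $T^{\Cl}_zM \bigwedge T^{\Cl}_zM \to (Q\xx_H{\mathfrak h})_z$ (the $\Cl$-bilinear extensions agreeing because the real-bilinear ones do and $d_z(id_M)$ is $\Cl$-linear), and then conclude.
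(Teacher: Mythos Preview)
Your overall approach matches the paper's: relate the curvatures via \eqref{12} and pass to bundle-valued forms using $\w\lk$. But the bookkeeping step you flagged as ``the main obstacle'' contains a genuine error, and unfortunately it is precisely the point where you second-guessed your correct first instinct.

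The displayed identity you settle on,
\[
\w{\u{\lT^\#}}_z(v_1,v_2)\,=\,\w\lk\big(\w{\u\lT}_z(v_1,v_2)\big)\qquad\text{for all }v_i\in T_z^{\Cl}M,
\]
is false. The right-hand side is \emph{anti}-$\Cl$-bilinear in $(v_1,v_2)$ (since $\w{\u\lT}_z$ is $\Cl$-bilinear by definition and $\w\lk$ is anti-linear on the fibre), while the left-hand side is $\Cl$-bilinear. Two such maps agreeing on the real subspace $T_zM$ cannot agree on all of $T_z^{\Cl}M$ unless both vanish identically. Your justification --- ``the $\Cl$-bilinear extensions agreeing because the real-bilinear ones do and $d_z(id_M)$ is $\Cl$-linear'' --- overlooks that the anti-linearity of $\w\lk$ on the target already spoils $\Cl$-bilinearity of the composite, regardless of what happens on the base.

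The correct extension is the formula you wrote first and then discarded:
\[
\w{\u{\lT^\#}}_z(v_1,v_2)\,=\,\w\lk\big(\w{\u\lT}_z(\overline{v}_1,\overline{v}_2)\big),\qquad v_i\in T_z^{\Cl}M,
\]
because inserting the conjugation on the arguments makes the right side $\Cl$-bilinear, and it still agrees with the left side on real vectors (where $\overline{v}_i=v_i$). This is exactly what the paper derives. With this formula the $(2,0)$-vanishing of $\u\lT$ gives the $(0,2)$-vanishing of $\u{\lT^\#}$ and vice versa --- so the two conditions in \eqref{15} are \emph{interchanged}, not individually preserved as you asserted. Since purity demands both, the conclusion survives; but your stated reason (``each map to themselves rather than being interchanged'') is the opposite of what actually happens.
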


\begin{proof} We have
$$\w{\u\lT}_z((d_q\lp)X,\,(d_q\lp)Y)\,=\,[q:\,\u\lT_q(X,Y)]$$
for all $X,\,Y\,\in\, T_qQ,$ where $z\,=\,\lp(q).$ For the connection $\lT^\#\,=\,\lk_*\lT$ the curvature satisfies
$$\lk^*\u\lT^\#\,=\,\lk^*\u{\lk_*\lT}\,=\,\u\#\oc\u\lT$$
according to \eqref{12}. This means that
$$\u\#(\u\lT_q(X,\,Y))\,=\,(\lk^*\u\lT^\#)_q(X,\,Y)\,=\,\u\lT^\#_{\lk q}((d_{\lk q}\lk)X,\,(d_{\lk q}\lk)Y).$$
Using the bundle map in \eqref{16} we have
$$\w\lk(\w{\u\lT}_z((d_q\lp)X,\,(d_q\lp)Y))\,=\,\w\lk[q:\,\u\lT_q(X,Y)]\,=\,[\lk q:\,\u\#(\u\lT_q(X,Y))]$$
$$=\,[\lk q:\,\u\lT^\#_{\lk q}((d_q\lk)X,\,(d_q\lk)Y)]\,=\, \w{\u\lT^\#}_z((d_{\lk q}\lp)(d_q\lk)X,\,(d_{\lk q}\lp)(d_q\lk)Y)
$$
$$
=\, \w{\u\lT^\#}_z((d_q\lp)X,\,(d_q\lp)Y)$$
because $\lp\oc\lk\,=\,\lp.$ Thus the equality
$$\w\lk(\w{\u\lT}_z(v_1,\,v_2))\,=\,\w{\u\lT^\#}_z(v_1,\,v_2)$$
holds for all $v_i\,\in\, T_zM.$ For the $\Cl$-bilinear extension we obtain the equality
$$\w{\u\lT^\#}_z(v_1,\,v_2)\,=\,\w\lk(\w{\u\lT}_z(\overline{v}_1,\,\overline{v}_2))$$
for all $v_i\,\in\, T_z^\Cl M,$ where $v\,\longmapsto\,\overline{v}$ is the canonical conjugation with fixed point
set $T_zM.$ Thus $\w{\u\lT^\#}_z$ satisfies the condition in \eqref{15} as well.
\end{proof}

It follows that there is a period 2 map $\lT\,\longmapsto\,\lT^\#$ on the space $\y\CL(Q)$
(see \eqref{pc}) of pure connections, 
whose fixed point set $\y\CL_\Rl(Q)$ consists of all pure hermitian connections on $Q.$

\section{Invariant connections}

For a $G$-equivariant principal bundle $Q$ the group $G$ acts on $\CL(Q)$ as follows: For any $g\,\in\, G$
the bundle map in \eqref{37}, associated with the identity map $id_H,$ yields the induced connection
$$g\cdot\lT\,:=\,(L_g^Q)_*\lT$$
satisfying $(L_g^Q)^*(g\cdot\lT)\,=\,\lT.$ Thus we have
\begin{equation}\label{17}
g\cdot\lT\,=\,(L^Q_{g^{-1}})^*\lT\, .
\end{equation} 
Let $\CL(Q)^G\,\subset\,\CL(Q)$ denote the set of all connections on $Q$ which are invariant under the
action of $G.$ For an equivariant hermitian structure $(Q,\,Q_\Rl)$ the group $G$ acts on $\CL(Q_\Rl)$ in
a manner analogous to \eqref{17}, and we let $\CL(Q_\Rl)^G\,\subset\,\CL(Q_\Rl)$ be the fixed point locus
for this action of $G$ on $\CL(Q_\Rl).$ The extension map $\lX\,\longmapsto
\,\li_*\lX$ commutes with the left action of $G$ on connections. It follows that there is a commuting diagram
$$\xymatrix{\CL^G(Q_\Rl)\ar[d]_{\li_*}&\subset &\CL(Q_\Rl)\ar[d]^{\li_*}\\ \CL^G(Q)&\subset &\CL(Q)}$$ 

\begin{proposition} If a connection $\lT^0$ is $G$-invariant, then 
$$g\cdot\lT^\lo\,=\,\lT^0+g\cdot\lo\,=\,\lT^{g\cdot\lo}$$
for all $\lo\,\in\,\lO^1_H(Q,\,{\mathfrak h}).$ If $(Q,\,Q_\Rl)$ is a hermitian structure, and $\lX^0$ is invariant, then 
$$g\cdot\lX^\lo\,=\,\lX^0+g\cdot\lo\,=\,\lX^{g\cdot\lo}$$
for all $\lo\,\in\,\lO^1_{H_\Rl}(Q_\Rl,\,{\mathfrak h}_\Rl).$ In both cases, the associated bundle-valued 1-forms satisfy
$$\w{g\cdot\lo}_{gx}\,=\,\w g\cdot (\w\lo_x\oc(d_{gx}L_{g^{-1}}^M))$$
using the action 
\begin{equation}\label{18}\w g\cdot [q,\,\lb]\,:=\,[gq,\,\lb]
\end{equation}
of $g\,\in\, G$ on $Q\xx_H{\mathfrak h}.$ (respectively, $Q\xx_{H_\Rl}{\mathfrak h}_\Rl.$) In particular, $\lT^\lo$ (respectively, $\lX^\lo$) is invariant if and only if
$$\w\lo_{gx}\oc(d_xL_g^M)\,=\,g\cdot\w\lo_x$$
for all $g\,\in\, G$ and $x\,\in\, M.$ 
\end{proposition}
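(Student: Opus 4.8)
The plan is to work entirely at the level of the connection $1$-forms on $Q$ (respectively on $Q_\Rl$) and translate the $G$-invariance of $\lT^\lo$ into a statement about the perturbation $\lo$, and then to push everything down to the associated bundle-valued forms on $M$. First I would start from the definition \eqref{17}, which says $g\cdot\lT\,=\,(L^Q_{g^{-1}})^*\lT$, and from the decomposition \eqref{10}, $\lT^\lo\,=\,\lT^0+\lo$. Since pull-back of forms is linear and $(L^Q_{g^{-1}})^*$ commutes with the formation of connection $1$-forms, we get $g\cdot\lT^\lo\,=\,(L^Q_{g^{-1}})^*\lT^0+(L^Q_{g^{-1}})^*\lo\,=\,\lT^0+g\cdot\lo$, using the hypothesis $g\cdot\lT^0\,=\,\lT^0$ for the first term. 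The only genuinely substantive point here is that $g\cdot\lo\,:=\,(L^Q_{g^{-1}})^*\lo$ again lies in $\lO^1_H(Q,\,{\mathfrak h})$, i.e.\ is again tensorial and $H$-equivariant; this follows because $L^Q_{g^{-1}}$ is a bundle morphism over $id_H$ (Definition \ref{def-2}) and hence commutes with the $H$-action and carries vertical vectors to vertical vectors, so pull-back preserves both tensoriality and $H$-equivariance. That gives $g\cdot\lT^\lo\,=\,\lT^{g\cdot\lo}$. The hermitian case is word-for-word the same, replacing $H$ by $H_\Rl$, ${\mathfrak h}$ by ${\mathfrak h}_\Rl$, $Q$ by $Q_\Rl$, and using that $\lX^0$ is $G$-invariant.

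Next I would descend to the associated bundle-valued $1$-forms. The defining property of $\w\lo$ is $\w\lo_{\lp(q)}((d_q\lp)X)\,=\,[q:\,\lo_q(X)]$ for $q\,\in\, Q$, $X\,\in\, T_qQ$. Applying this to $g\cdot\lo$, and using that $L_g^Q$ covers $L_g^M$ so that $\lp\oc L_g^Q\,=\,L_g^M\oc\lp$ and $(d_q\lp)\,=\,(d_{L_g^Q q}\lp)\oc(d_qL_g^Q)$, one computes for $q$ over $x$, writing $q'\,=\,L^Q_{g^{-1}}q'$ as appropriate: evaluating $(g\cdot\lo)_q\,=\,((L^Q_{g^{-1}})^*\lo)_q\,=\,\lo_{L^Q_{g^{-1}}q}\oc(d_qL^Q_{g^{-1}})$ and feeding it through the definition of $\widetilde{(\cdot)}$ together with the action \eqref{18} of $G$ on $Q\xx_H{\mathfrak h}$ produces exactly $\w{g\cdot\lo}_{gx}\,=\,\w g\cdot(\w\lo_x\oc(d_{gx}L^M_{g^{-1}}))$. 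This is a direct bookkeeping computation chasing base points through the two bundle maps, so I would not spell out every step; the key is simply to keep consistent track of which fiber each vector lives in.

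For the last assertion, $\lT^\lo$ is $G$-invariant precisely when $g\cdot\lT^\lo\,=\,\lT^\lo$, i.e.\ by the first part when $g\cdot\lo\,=\,\lo$, i.e.\ $\w{g\cdot\lo}\,=\,\w\lo$ (since $\lo\,\longmapsto\,\w\lo$ is a bijection between $\lO^1_H(Q,\,{\mathfrak h})$ and bundle-valued $1$-forms of type $Ad_H$, as recalled from \cite[Section II.5]{KN}). Substituting the formula just derived, $\w{g\cdot\lo}_{gx}\,=\,\w g\cdot(\w\lo_x\oc(d_{gx}L^M_{g^{-1}}))$, and equating with $\w\lo_{gx}$, then replacing $d_{gx}L^M_{g^{-1}}$ by the inverse of $d_xL^M_g$, yields $\w\lo_{gx}\oc(d_xL^M_g)\,=\,\w g\cdot\w\lo_x$, which is the stated condition. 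I expect the main obstacle — such as it is — to be purely organizational: carefully verifying that $(L^Q_{g^{-1}})^*$ maps $\lO^1_H(Q,\,{\mathfrak h})$ into itself, and then tracking base points and differentials consistently through the chain $Q\to M$, $L_g^Q$ over $L_g^M$, in the computation of $\w{g\cdot\lo}$; once those are pinned down the proposition is a formal consequence of \eqref{10}, \eqref{17}, \eqref{18}, and the definition of the associated bundle-valued form. The hermitian variant requires no new idea.
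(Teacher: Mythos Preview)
Your proposal is correct and follows essentially the same approach as the paper: define $g\cdot\lo=(L^Q_{g^{-1}})^*\lo$, use linearity of pull-back together with the invariance of $\lT^0$ to get $g\cdot\lT^\lo=\lT^{g\cdot\lo}$, and then chase the definition of the associated bundle-valued form through the identity $\lp\oc L^Q_{g^{-1}}=L^M_{g^{-1}}\oc\lp$ to obtain the formula for $\w{g\cdot\lo}_{gx}$. The paper's proof is terser and does not separately verify that $(L^Q_{g^{-1}})^*\lo\in\lO^1_H(Q,{\mathfrak h})$ or spell out the final invariance equivalence, but the substance is identical.
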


\begin{proof} Since $\lT^\lo\,=\,\lT^0+\lo$ and $\lT^0$ is invariant, from \eqref{17} it follows
that $g\cdot\lo\,=\,(L_{g^{-1}}^Q)^*\lo.$ Thus we have
$$(g\cdot\lo)_{gq}\,=\,((L_{g^{-1}}^Q)^*\lo)_{gq}\,=\,\lo_q\oc(d_{gq}L_{g^{-1}}^Q)\, .$$
For every $X\,\in\, T_{gq}Q$, it follows that
$$\w{g\cdot\lo}_{gx}(d_{gq}\lp)X\,=\,[gq:\,(g\cdot\lo)_{gq}X]\,=\,[gq:\,\lo_q(d_{gq}L_{g^{-1}}^Q)X]$$
$$=\,g\cdot [q:\,\lo_q(d_{gq}L_{g^{-1}}^Q)X]\,=\,g\cdot\w\lo_x(d_q\lp)(d_{gq}L_{g^{-1}}^Q)X\,=\,g
\cdot\w\lo_x(d_xL_{g^{-1}}^M)(d_{gq}\lp)X$$
using the identity $\lp\oc L_{g^{-1}}^Q\,=\,L_{g^{-1}}^M\oc\lp.$ Thus
$$\w{g\cdot\lo}_{gx}v\,=\,g\cdot\w\lo_x(d_{gx}L_{g^{-1}}^M)v$$
for all $v\,\in\, T_{gx}M.$
\end{proof}

\begin{proposition}\label{ppc}
In case $G$ acts holomorphically on a complex manifold $M,$ the $G$-action on $\CL(Q)$ preserves the Hodge
types of the curvature. In particular, if $\lT$ is a pure connection, then $g\cdot\lT$ is also pure for
every $g\,\in\, G.$
\end{proposition}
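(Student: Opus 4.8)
The plan is to transport the purity condition along the bundle automorphism $L_g^Q$ and use that $L_g^M$ is holomorphic. Concretely, I would start from the curvature relation \eqref{12} applied to the bundle map $F\,=\,L_g^Q$ over $g\,=\,L_g^M$, associated to $f\,=\,id_H$ (so $\u f\,=\,id_{\mathfrak h}$). This gives $(L_g^Q)^*(\u{g\cdot\lT})\,=\,\u\lT$, and hence, passing to the associated bundle-valued $2$-forms exactly as in the proof of the previous proposition, an identity of the shape
\begin{equation*}
\w{\u{g\cdot\lT}}_{gx}\big((d_xL_g^M)v_1,\,(d_xL_g^M)v_2\big)\,=\,\w g\cdot\w{\u\lT}_x(v_1,\,v_2)
\end{equation*}
for all $v_1,\,v_2\,\in\, T_xM$, where $\w g$ is the ($\Cl$-linear, since $f\,=\,id_H$) fiber map from \eqref{18}. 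I would derive this by the same chase used above: evaluate $\w{\u{g\cdot\lT}}_{gx}$ on $(d_{gq}\lp)$-images, insert the definition $[gq:\,\u{g\cdot\lT}_{gq}(X,Y)]$, use $(L_g^Q)^*(\u{g\cdot\lT})\,=\,\u\lT$ to rewrite $\u{g\cdot\lT}_{gq}$ in terms of $\u\lT_q$, and then use $\lp\oc L_g^Q\,=\,L_g^M\oc\lp$ to push the differentials down to $M$.

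Next I would extend this identity $\Cl$-bilinearly to $T_x^\Cl M$. The crucial point is that $L_g^M$ is holomorphic, so its differential $d_xL_g^M\,:\,T_x^\Cl M\,\longrightarrow\, T_{gx}^\Cl M$ respects the Hodge decomposition: it maps $T^{1,0}_xM$ into $T^{1,0}_{gx}M$ and $T^{0,1}_xM$ into $T^{0,1}_{gx}M$. Also $\w g$ is $\Cl$-linear on the fibers of $Q\xx_H{\mathfrak h}$ (this is where the complex structure on $H$ and $f\,=\,id_H$ matter, in contrast to the anti-linear $\w\lk$). Therefore, for $v_1,\,v_2\,\in\, T^{1,0}_xM$ the vectors $(d_xL_g^M)v_i$ lie in $T^{1,0}_{gx}M$, and the displayed identity shows
\begin{equation*}
\w{\u{g\cdot\lT}}_{gx}\big((d_xL_g^M)v_1,\,(d_xL_g^M)v_2\big)\,=\,\w g\cdot\w{\u\lT}_x(v_1,\,v_2)\,=\,0
\end{equation*}
whenever $\lT$ is pure; since $d_xL_g^M$ is a $\Cl$-linear isomorphism, $(d_xL_g^M)v_1$ and $(d_xL_g^M)v_2$ range over all of $T^{1,0}_{gx}M$ as $v_1,v_2$ range over $T^{1,0}_xM$, giving the vanishing of $\w{\u{g\cdot\lT}}_{gx}$ on $T^{1,0}_{gx}M\wedge T^{1,0}_{gx}M$. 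The same argument with $T^{0,1}$ handles the anti-holomorphic part, so $\w{\u{g\cdot\lT}}_{gx}$ is of type $(1,1)$ at every point, i.e. $g\cdot\lT$ is pure. More generally, the displayed conjugation-equivariance formula shows that $g\cdot$ carries the $(p,q)$-component of the curvature to the $(p,q)$-component, which is the first assertion about Hodge types.

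The main obstacle is essentially bookkeeping rather than conceptual: one must be careful that the associated-bundle $2$-form $\w\lO$ is only defined on $T_zM$ a priori, and that its $\Cl$-bilinear extension to $T_z^\Cl M$ is the object on which purity is a condition; so the equivariance identity must first be proven for real tangent vectors (via the principal-bundle differential-form computation above) and only then extended $\Cl$-bilinearly, at which stage holomorphy of $L_g^M$ is invoked. A second point to check is that the fiber map $\w g$ of \eqref{18} is indeed $\Cl$-linear with respect to the complex-vector-bundle structure on $Q\xx_H{\mathfrak h}$ — this is immediate because $\w g$ is induced by $id_{\mathfrak h}$ on the second factor, which is $\Cl$-linear — and that it is an isomorphism on fibers, which is clear since $L_g^Q$ is a bundle automorphism. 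Once these are in place the proof is a direct transcription of the preceding proposition's proof with the extra holomorphy input, and no genuinely new estimate is required.
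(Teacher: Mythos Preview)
Your proposal is correct and follows essentially the same route as the paper: derive the equivariance identity $\w{\u{g\cdot\lT}}_{gx}((d_xL_g^M)v_1,(d_xL_g^M)v_2)=\w g\cdot\w{\u\lT}_x(v_1,v_2)$ from the pull-back relation for curvature under $L_g^Q$, then invoke holomorphy of $L_g^M$ and $\Cl$-linearity of $\w g$ on fibers to conclude preservation of Hodge type. The paper writes the identity in the equivalent form $\w{\u{g\cdot\lT}}_{gz}(v_1,v_2)=\w g\cdot\w{\u\lT}_z((d_{gz}L_{g^{-1}}^M)v_1,(d_{gz}L_{g^{-1}}^M)v_2)$ with $v_i\in T_{gz}M$, but the argument is the same.
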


\begin{proof} Since $\u{g\cdot\lT}\,=\,(L^Q_{g^{-1}})^*\u\lT$ we have
$$\u{g\cdot\lT}_{gq}(X,\,Y)\,=\,\u\lT_q((d_{gq}L_{g^{-1}}^Q)X,\,(d_{gq}L_{g^{-1}}^Q)X)$$ for all
$X,\,Y\,\in\, T_{gq}Q,$ and hence
$$\w{\u{g\cdot\lT}}_{gz}((d_{gq}\lp)X,\,(d_{gq}\lp)Y)\,=\,[gq:\,\u{g\cdot\lT}_{gq}(X,\,Y)]$$
$$=\,[gq:\,\u\lT_q((d_{gq}L_{g^{-1}}^Q)X,\,(d_{gq}L_{g^{-1}}^Q)Y)]\,=\,
g\cdot [q:\,\u\lT_q((d_{gq}L_{g^{-1}}^Q)X,\,(d_{gq}L_{g^{-1}}^Q)Y)]$$
$$=\,g\cdot\w{\u\lT}_z((d_q\lp)(d_{gq}L_{g^{-1}}^Q)X,\,(d_q\lp)(d_{gq}L_{g^{-1}}^Q)Y)
$$
$$
=g\cdot\w{\u\lT}_z((d_{gz}L_{g^{-1}}^M)(d_{gq}\lp)X,\,(d_{gz}L_{g^{-1}}^M)(d_{gq}\lp)Y)\, .$$
It follows that
$$\w{\u{g\cdot\lT}}_{gz}(v_1,\,v_2)\,=\,\w g\cdot\w{\u\lT}_z((d_{gz}L_{g^{-1}}^M)v_1,\,(d_{gz}L_{g^{-1}}^M)v_2)$$
for all $v_i\,\in\, T_{gz}M.$ Since the transformations $L_g^M$ are holomorphic, and the action
in \eqref{18} is $\Cl$-linear on the fibers, it follows that $\w{\u{g\cdot\lT}}$ has the same Hodge type as
that of $\w{\u\lT}.$ 
\end{proof}

{}From Proposition \ref{ppc} it follows that the action of $G$ preserves the subset $\y\CL(Q)$ in \eqref{pc}. Let
$$\y\CL(Q)^G\,=\,\y\CL(Q)\ui\CL(Q)^G$$
denote the set of all invariant pure connections on $Q.$ The involution $\lT\,\longmapsto\,\lT^\#$ on the
space $\y\CL(Q)^G$ of all pure invariant connections on $Q$ has the fixed point set $\y\CL_\Rl(Q)^G$ consisting of all pure invariant hermitian connections.

Now consider the special case where $M\,=\,G/K$ is a {\bf symmetric space}. Let ${\mathfrak g}$ (respectively, ${\mathfrak k}$) be the Lie algebra of $G$ 
(respectively, $K$). Both ${\mathfrak g}$ and ${\mathfrak k}$ are $K$-modules by the adjoint action. The Killing form on ${\mathfrak g}$ is non-degenerate. Let
$${\mathfrak p}\,=\,{\mathfrak k}^\perp \,\subset\, {\mathfrak g}$$
be the orthogonal complement of ${\mathfrak k}$ for the Killing form. Then the natural homomorphism
\begin{equation}\label{19}
{\mathfrak k}\oplus{\mathfrak p}\,\longrightarrow\, {\mathfrak g}
\end{equation}
is an isomorphism. The translates of ${\mathfrak p}$ by the left-translation action of $G$ on itself define a distribution $D
\,\subset\, TG$ which is in fact
preserved by the right-translation action of $K$ on $G$ because the decomposition
in \eqref{19} is $K$-equivariant. From this it follows immediately
that $D$ defines a connection, denoted by $\lG^0,$ on the principal $K$-bundle $\Gl\,:=\,G\xx_K K$. Since $D$ is
preserved by the left-translation action of $G$ on itself, we conclude that $\lG^0$ is invariant. Its connection
1-form coincides with the canonical projection
$$\lG^0_e\,:\,T_eG\,=\,{\mathfrak g}\,\longrightarrow\,{\mathfrak k}$$
at the identity element, and 
$$\lG^0_g\,:=\,\lG^0_e\oc(d_g L_{g^{-1}}^G)$$
for every $g\,\in\, G,$ using the left translations of $G$ on itself.

Consider the Lie bracket operation composed with the projection to ${\mathfrak k}$
\begin{equation}\label{20}
{\mathfrak p}\xt{\mathfrak p}\, \longrightarrow\, {\mathfrak g}\, \longrightarrow\, {\mathfrak k}\, .
\end{equation}
Left translations by elements of $G$ of this composition of homomorphisms defines a $\CL^\oo$ two-form
on $G/K$ with values in the adjoint vector bundle $\text{ad}({\mathbb G})$. This $\text{ad}({\mathbb G})$-valued
2-form is in fact the curvature of $\lG^0.$ 

\begin{proposition} For any homomorphism $f\,:\,K\,\longrightarrow\, H$ and $\lh\,\in\, H^f,$ the
{\bf tautological} connection
$$\lT^0\,:=\,(f_\lh)_*\lG^0$$ 
induced by the bundle map in \eqref{14} is invariant. If $f\,:\,K\,\longrightarrow\, H_\Rl$ and
$\lh\,\in\, H_\Rl$, then the induced connection
$$\lX^0\,:=\,(f_\lh)_*\lG^0$$
on $Q_\Rl\,=\,G\xx_{K,f}H_\Rl$ is invariant.
\end{proposition}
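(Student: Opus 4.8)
The plan is to verify $G$-invariance of the tautological connection $\lT^0 = (f_\lh)_*\lG^0$ by tracing the equivariance through the two facts established earlier: (i) the bundle map $f_\lh : G \to Q$ is $G$-equivariant over $\mathrm{id}_M$ (this is the Lemma giving $f_\lh(gk)=f_\lh(g)f(k)$, which also records that $f_\lh$ is equivariant, i.e. $f_\lh \oc L_g^G = L_g^Q \oc f_\lh$), and (ii) the canonical connection $\lG^0$ on $\Gl = G\xx_K K$ is $G$-invariant, i.e. $(L_g^G)_*\lG^0 = \lG^0$ for every $g\in G$ — this was noted when $\lG^0$ was constructed, since its defining distribution $D\subset TG$ is preserved by left translation. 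The key identity I need is the functoriality of the push-forward of connections under composition of bundle maps: if $F_1$ and $F_2$ are composable principal-bundle maps then $(F_2\oc F_1)_* = (F_2)_* \oc (F_1)_*$ on connections (both sides are characterized by sending horizontal subspaces to horizontal subspaces, cf. the discussion preceding \eqref{11}).

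First I would unwind the definition of $G$-invariance of $\lT^0$: by \eqref{17} this means $(L_{g^{-1}}^Q)^* \lT^0 = \lT^0$, equivalently $(L_g^Q)_* \lT^0 = \lT^0$, for all $g\in G$. Then I would compute, using functoriality of push-forward and the equivariance $L_g^Q \oc f_\lh = f_\lh \oc L_g^G$:
\begin{equation*}
(L_g^Q)_* \lT^0 \,=\, (L_g^Q)_* (f_\lh)_* \lG^0 \,=\, (L_g^Q \oc f_\lh)_* \lG^0 \,=\, (f_\lh \oc L_g^G)_* \lG^0 \,=\, (f_\lh)_* (L_g^G)_* \lG^0 \,=\, (f_\lh)_* \lG^0 \,=\, \lT^0,
\end{equation*}
where the penultimate equality is the $G$-invariance of $\lG^0$. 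This proves invariance of the tautological connection on $Q = G\xx_{K,f}H$. The case $f : K\to H_\Rl$ with $\lh\in H_\Rl^f$ is literally the same argument applied to the $G$-equivariant bundle map $f_\lh : G \to Q_\Rl = G\xx_{K,f}H_\Rl$ (the second version of the Lemma), so $\lX^0 = (f_\lh)_*\lG^0$ is invariant as a connection on $Q_\Rl$; one can alternatively observe $\li_*\lX^0 = \lT^0$ and use that $\li_*$ is injective and $G$-equivariant.

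The only point that needs a word of care — and which I would expect to be the main (though minor) obstacle — is justifying functoriality $(F_2\oc F_1)_* = (F_2)_*\oc(F_1)_*$ for push-forward of connections, since the text only introduces $F_*\lT$ abstractly via preservation of horizontal subspaces and the relations \eqref{11}–\eqref{12}. This is immediate from that characterization: $F_1$ maps $\lG^0$-horizontal subspaces to $(F_1)_*\lG^0$-horizontal ones, and then $F_2$ maps those to $(F_2)_*((F_1)_*\lG^0)$-horizontal ones, so $F_2\oc F_1$ maps $\lG^0$-horizontal to $(F_2)_*(F_1)_*\lG^0$-horizontal; by uniqueness of the induced connection this equals $(F_2\oc F_1)_*\lG^0$. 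Alternatively, one can verify the claim at the level of connection $1$-forms using \eqref{11} twice together with $\u{f_2\oc f_1}=\u{f_2}\oc\u{f_1}$; here the relevant homomorphism datum for $L_g^Q$ and $L_g^G$ is $\mathrm{id}_H$ (resp. $\mathrm{id}_K$), so no compatibility of structure-group homomorphisms is even at issue. With this in hand the proof is the short computation above.
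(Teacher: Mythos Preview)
Your proof is correct and is essentially identical to the paper's own argument: the paper also uses the equivariance $L_g^Q\oc f_\lh=f_\lh\oc L_g^G$ together with the invariance of $\lG^0$ and functoriality of the induced connection to compute $(L_g^Q)_*\lT^0=(f_\lh)_*(L_g^G)_*\lG^0=(f_\lh)_*\lG^0=\lT^0$ (the paper in fact records this computation twice, once via pull-backs of connection $1$-forms and once, exactly as you do, via push-forwards). Your added justification of the functoriality $(F_2\oc F_1)_*=(F_2)_*\oc(F_1)_*$ is a welcome clarification that the paper takes for granted.
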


\begin{proof} Since $f_\lh$ intertwines the $G$-actions $L_g^G$ and $L_g^Q$ on $\Gl$ and $Q$ respectively, we have
$$f_\lh^*(L_g^Q)^*\lT^0\,=\,(L_g^Q\oc f_\lh)^*\lT^0\,=\,(f_\lh\oc L_g^G)^*\lT^0\,=\,(L_g^G)^*f_\lh^*\lT^0\,
=\,(L_g^G)^*\lG^0\,=\,\lG^0\,=\,f_\lh^*\lT^0$$
for the respective connection 1-forms. It follows that $(L_g^Q)^*\lT^0\,=\,\lT^0$ for all $g\,\in\, G.$ More
directly, we can argue that
$$g\cdot\lT^0\,=\,(L_g^Q)_*\lT^0\,=\,(L_g^Q)_*(f_\lh)_*\lG^0)\,=\,(L_g^Q\oc f_\lh)_*\lG^0
$$
$$
=\,(f_\lh\oc L_g^G)_*\lG^0\,=\,(f_\lh)_*((L_g^G)_*\lG^0)\,=\,(f_\lh)_*\lG^0\,=\,\lT^0\, .$$
This proves the proposition.
\end{proof}

In \cite[Section 3]{BU} the connection $\lT^0$ (for $\lh\,=\,e_H$) is described more explicitly.

A real-linear map $\cl\,:\,T_oM\,\longrightarrow\,{\mathfrak h}$ is called {\bf $f$-covariant} if
\begin{equation}\label{23}
\cl\oc(d_oL_k^M)\,=\,Ad_{f(k)}^H\oc\cl
\end{equation}
for all $k\,\in\, K.$ Let $\LL(T_oM,\,{\mathfrak h})^f$ denote the vector space of all $f$-covariant linear mappings. If $f\,:\,K
\,\longrightarrow\, H_\Rl,$ we define $\LL(T_oM,\,{\mathfrak h}_\Rl)^f$ in a similar way. 

\begin{proposition}\label{a} There is a 1-1 correspondence
\begin{equation}\label{24}
\LL(T_oM,\,{\mathfrak h})^f\,\longrightarrow\,\lO^1_H(Q,\,{\mathfrak h})^G,\ \ \cl\,\longmapsto\,\lo
\end{equation}
which is uniquely determined by
\begin{equation}\label{21}
\lo_\ql X\,=\,\cl((d_\ql\lp)X)
\end{equation}
for all $X\,\in\, T_\ql Q,$ where $\ql\,\in\, Q_o$ satisfies \eqref{6}. The associated bundle-valued 1-form is given by
\begin{equation}\label{22}\w\lo_xv\,:=\,[g\ql,\,\cl(d_xL_{g^{-1}}^M)v]
\end{equation}
for all $v\in T_xM,$ where $x\,=\,g(o)$ for some $g\,\in\, G.$ For a hermitian structure $(Q,\,Q_\Rl)$
there is a similar 1-1 correspondence
\begin{equation}\label{25}
\LL(T_oM,\,{\mathfrak h}_\Rl)^f\,\longrightarrow\,\lO^1_{H_\Rl}(Q_\Rl,\,{\mathfrak h}_\Rl)^G,\ \ \cl\,\longmapsto\,\lo
\end{equation}
such that \eqref{21} and \eqref{22} hold for any $\ql\,\in\, (Q_\Rl)_o.$ 
\end{proposition}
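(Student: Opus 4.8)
The plan is to construct the correspondence in \eqref{24} explicitly and then verify that the two directions are mutually inverse. First I would take a $G$-invariant tensorial $1$-form $\lo\,\in\,\lO^1_H(Q,\,{\mathfrak h})^G$ and define a linear map $\cl\,:\,T_oM\,\longrightarrow\,{\mathfrak h}$ by the formula \eqref{21}, i.e., $\cl(v)\,:=\,\lo_\ql X$ where $X\,\in\, T_\ql Q$ is any lift of $v$ (that is, $(d_\ql\lp)X\,=\,v$). This is well-defined because $\lo$ is tensorial: two lifts of $v$ differ by a vertical vector, and $\lo$ vanishes on vertical vectors (being horizontal/tensorial, as opposed to the connection form itself). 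Next I would check that this $\cl$ is $f$-covariant. The key identity is \eqref{6}, $k\ql\,=\,\ql f(k)$: differentiating the left-translation action at $o$ and using both the $G$-invariance of $\lo$ (so $(L_k^Q)^*\lo\,=\,\lo$) and the $H$-equivariance of $\lo$ (so $\lo$ transforms by $Ad_{f(k)^{-1}}$ under the right $f(k)$-translation), one obtains exactly \eqref{23}. This is the computation already packaged in the last displayed formula of the preceding Proposition, specialized to $x\,=\,o$, $g\,=\,k\,\in\, K$.

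Conversely, given $\cl\,\in\,\LL(T_oM,\,{\mathfrak h})^f$, I would define a bundle-valued $1$-form $\w\lo$ on $M$ by the formula \eqref{22}, namely $\w\lo_x v\,:=\,[g\ql,\,\cl((d_xL_{g^{-1}}^M)v)]$ for $x\,=\,g(o)$. The point requiring proof is that this is independent of the choice of $g\,\in\, G$ with $g(o)\,=\,x$: two such choices differ by right multiplication by an element $k\,\in\, K$ (the stabilizer of $o$), and one must check that the change $g\,\rightsquigarrow\, gk$ leaves the right-hand side of \eqref{22} unchanged. Here $g k\ql\,=\,g\ql f(k)$ by \eqref{6}, and in the homogeneous vector bundle $Q\xx_H{\mathfrak h}$ we have $[g\ql f(k),\,\lb]\,=\,[g\ql,\,Ad_{f(k)}^H\lb]$; combined with $(d_xL_{(gk)^{-1}}^M)v\,=\,(d_oL_{k^{-1}}^M)(d_xL_{g^{-1}}^M)v$ and the $f$-covariance \eqref{23} of $\cl$, the two expressions agree. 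Then $\w\lo$ pulls back to a tensorial $\lo\,\in\,\lO^1_H(Q,\,{\mathfrak h})$, and its $G$-invariance follows from the characterization in the previous Proposition (the defining formula \eqref{22} is manifestly equivariant: replacing $x$ by $g'x$ and $g$ by $g'g$ multiplies the base point by $g'$, which is precisely the action \eqref{18}), using that $(d_{g'x}L_{g'^{-1}}^M)$ cancels the extra factor. Finally, that the two constructions are inverse to each other is immediate by plugging $g\,=\,e$ (so $x\,=\,o$, $g\ql\,=\,\ql$) into \eqref{22} to recover \eqref{21}.

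For the hermitian case \eqref{25}, I would simply observe that the entire construction above is natural with respect to the inclusion $H_\Rl\,\hookrightarrow\, H$ and $Q_\Rl\,\hookrightarrow\, Q$: if $f$ takes values in $H_\Rl$ and $\ql\,\in\,(Q_\Rl)_o$, then a lift $X\,\in\, T_\ql Q_\Rl$ gives $\lo_\ql X\,\in\,{\mathfrak h}_\Rl$, so $\cl$ lands in ${\mathfrak h}_\Rl$; and $f$-covariance is the identical condition \eqref{23} read with values in ${\mathfrak h}_\Rl$ (which makes sense since $Ad_{f(k)}^H$ preserves ${\mathfrak h}_\Rl$ when $f(k)\,\in\, H_\Rl$). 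The reverse direction uses the homogeneous vector bundle $Q_\Rl\xx_{H_\Rl}{\mathfrak h}_\Rl$ in place of $Q\xx_H{\mathfrak h}$; formulas \eqref{21} and \eqref{22} hold verbatim. No new argument is needed beyond observing compatibility with the vector bundle map $\w\li$ from the earlier Proposition.

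I expect the main obstacle to be purely bookkeeping: correctly chasing the chain-rule factors $d_xL_{g^{-1}}^M$ through the homogeneous-space identifications and keeping track of whether $Ad$ appears with $f(k)$ or $f(k)^{-1}$ when comparing the right $H$-action on $Q$ with the equivalence relation defining $Q\xx_H{\mathfrak h}$. Once the well-definedness of \eqref{22} under the change $g\,\rightsquigarrow\, gk$ is pinned down, everything else reduces to the identities \eqref{6}, \eqref{23}, and the equivariance characterization of invariant forms already established, so there is no genuinely hard analytic step — only the care required to make the two $K$-equivariance conditions match up.
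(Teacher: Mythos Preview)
Your proposal is correct and follows essentially the same approach as the paper. The paper's own proof is quite terse: it only writes out the well-definedness computation for \eqref{22} under the substitution $g\rightsquigarrow gk$ (your second paragraph) and then asserts the converse in one line, whereas you have additionally spelled out why \eqref{21} is well-defined, why $\cl$ is $f$-covariant, why the two constructions are mutually inverse, and how the hermitian case follows---but none of this differs in substance from what the paper intends.
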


\begin{proof} Since
$$[gk\ql,\,\cl(d_xL_{(gk)^{-1}}^M)v]\,=\,[g\ql f(k),\,\cl(d_oL_{k^{-1}}^M)(d_xL_{g^{-1}}^M)v]
\,=\,[g\ql,\, Ad_{f(k)}^H\cl(d_oL_{k^{-1}}^M)(d_xL_{g^{-1}}^M)v]$$ 
for all $k\,\in\, K,$ the covariance condition in \eqref{23} ensures that $\w\lo_x$ is well-defined, i.e.,
it is independent of the choice of $g.$
Conversely, every $G$-invariant 1-form of type $Ad_H$ on $M$ arises this way.
\end{proof}

The group $H^f$ defined in \eqref{9} acts on $\LL(T_oM,\,{\mathfrak h})^f$ via $\cl\,\longmapsto\, I^H_\lh\oc\cl,$ for $\lh\,
\in \,H^f.$ Similarly, for $f\,:\,K\,\longrightarrow\, H_\Rl,$ the group $H_\Rl^f$ acts on $\LL(T_oM,\,{\mathfrak h}_\Rl)^f.$

The next result is a sharpening of \cite[Proposition 3.3]{BU}.

\begin{theorem}\label{b} For a fixed homomorphism $f\,:\,K\,\longrightarrow\, H,$ the assignment
\begin{equation}\label{26}
\cl\,\longmapsto\,\lT^\cl\,:=\,\lT^\lo\,=\,\lT^0+\lo
\end{equation}
described in \eqref{24} yields a canonical bijection
$$\LL(T_oM,\,{\mathfrak h})^f/H^f\, \stackrel{\sim}{\longrightarrow}\, \CL(Q)^G$$ 
which classifies all invariant connections on $Q\,=\,G\xx_{K,f}H.$ If $f\,:\,K\,\longrightarrow\, H_\Rl,$ the assignment
$$\cl\,\longmapsto\,\lX^\cl\,:=\,\lX^\lo\,=\,\lX^0+\lo$$ 
specified in \eqref{25} yields a canonical bijection
$$\LL(T_oM,\,{\mathfrak h}_\Rl)^f/H_\Rl^f\, \stackrel{\sim}{\longrightarrow}\,\CL(Q_\Rl)^G$$ 
which classifies all invariant connections on $Q_\Rl\,=\,G\xx_{K,f}H_\Rl.$
\end{theorem}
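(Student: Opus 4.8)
The plan is to assemble the bijection from three pieces already in place: the linear parametrization of invariant $\mathfrak h$-valued $1$-forms by $f$-covariant linear maps (Proposition \ref{a}), the description of all connections as $\lT^0 + \lo$ with $\lT^0$ the tautological invariant connection, and the dependence of everything on the choice of base point $\ql \in Q_o$ — equivalently on $\lh \in H^f$. The first step is to fix the base point $\ql$ with $k\ql = \ql f(k)$ and invoke the earlier observation that $\CL(Q) = \lT^0 + \lO^1_H(Q,\mathfrak h)$, hence, intersecting with the $G$-fixed locus and using that $\lT^0$ is invariant, $\CL(Q)^G = \lT^0 + \lO^1_H(Q,\mathfrak h)^G$. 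Composing with the isomorphism \eqref{24} of Proposition \ref{a} gives a bijection $\LL(T_oM,\mathfrak h)^f \xrightarrow{\sim} \CL(Q)^G$ depending on $\ql$, namely $\cl \mapsto \lT^0 + \lo = \lT^\cl$ in the notation of \eqref{26}. This is essentially \cite[Proposition 3.3]{BU}.

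Second, I would analyze how this bijection changes when the base point is replaced by $\ql' = \ql\lh$ for $\lh \in H^f$ (this is the only freedom, since any two base points in $Q_o$ satisfying \eqref{6} differ by an element of $H^f$, as noted after \eqref{9}). On the one hand, by the Proposition following \eqref{9} the automorphism $L_\lh^Q[g:h] = [g:\lh h]$ carries $\ql$ to $\ql'$; on the other hand, the tautological connection $\lT^0 = (f_\lh)_*\lG^0$ is defined via the embedding $f_\lh$, and the commuting diagram after that Proposition shows $f_{\lh\lh'} = L_\lh^Q \oc f_{\lh'}$, so changing $\ql$ amounts to post-composing the embedding with $L_\lh^Q$ — but $L_\lh^Q$ is a bundle automorphism, so it sends connections to connections and, being built from left translation in the $H$-factor, commutes with the $G$-action. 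Tracking \eqref{21}–\eqref{22} through this change of base point, the defining relation $\lo_{\ql'}X = \cl'((d_{\ql'}\lp)X)$ versus $\lo_\ql X = \cl((d_\ql\lp)X)$, together with the $H$-equivariance of the connection $1$-form (it transforms by $Ad$ along the fiber), forces $\cl' = Ad_{\lh^{-1}}^H \oc \cl = I_{\lh^{-1}}^H \oc \cl$. Hence the two parametrizations, for $\ql$ and $\ql' = \ql\lh$, differ exactly by the action $\cl \mapsto I_\lh^H \oc \cl$ of $H^f$ on $\LL(T_oM,\mathfrak h)^f$ recorded just before the theorem. Meanwhile the connection $\lT^\cl$ itself is intrinsic — it does not reference the base point — so $\lT^\cl = \lT^{\cl'}$ precisely when $\cl' = I_\lh^H\oc\cl$ for some $\lh \in H^f$.

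Third, I would conclude: the map $\cl \mapsto \lT^\cl$ is constant on $H^f$-orbits by the previous paragraph, it is surjective onto $\CL(Q)^G$ because for a fixed base point the map $\cl \mapsto \lT^0+\lo$ is already surjective, and it is injective modulo $H^f$ because if $\lT^{\cl_1} = \lT^{\cl_2}$ then, reading off the $1$-forms relative to the single fixed base point $\ql$, we get $\lo_1 = \lo_2$, hence $\cl_1 = \cl_2$ by the bijectivity of \eqref{24} — so in fact the fibers are single $H^f$-orbits only when we allow the base point to vary; with $\ql$ fixed the map is already a bijection onto $\CL(Q)^G$, and dividing by $H^f$ records exactly the base-point ambiguity. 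This yields the canonical (base-point-independent) bijection $\LL(T_oM,\mathfrak h)^f/H^f \xrightarrow{\sim} \CL(Q)^G$. The hermitian case is formally identical: replace $H$ by $H_\Rl$, $\mathfrak h$ by $\mathfrak h_\Rl$, $H^f$ by $H_\Rl^f$, use \eqref{25} in place of \eqref{24}, and use that $\lX^0 = (f_\lh)_*\lG^0$ with $\lh \in H_\Rl^f$ is invariant; all the diagrams restrict. The main obstacle is the bookkeeping in the second step — making the identification $\cl' = I_{\lh^{-1}}^H\oc\cl$ precise and checking it is genuinely the $H^f$-action from just before the theorem, rather than its inverse or some twist — which is a matter of carefully chasing \eqref{6}, \eqref{21}, \eqref{22} and the formula for $L_\lh^Q$; everything else is assembly of results already proved.
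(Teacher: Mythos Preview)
Your proposal is correct and follows essentially the same approach as the paper: surjectivity from Proposition~\ref{a} together with \cite[Proposition~3.3]{BU}, and the passage to the quotient by $H^f$ coming from the base-point ambiguity $\ql\mapsto\ql\lh$. On the bookkeeping you flag as the main obstacle, the paper records the transformation as $\cl'\,=\,I^H_\lh\oc\cl$ (not $I^H_{\lh^{-1}}$), so that is the sign you should pin down when you write it out carefully.
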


\begin{proof}
Using Proposition \ref{a} and \cite[Proposition 3.3]{BU} it can be shown that every invariant connection $\lT$ on $Q$
is of the form $\lT^\cl$ for some $\cl\,\in\,\LL(T_oM,\,{\mathfrak h})^f.$ However, the assignment in
\eqref{26} is not injective, because the 
tautological connection $\lT^0$ depends on the choice of base point $\ql\,\in\, Q_o$ satisfying \eqref{6}. For any 
$\lh\,\in\, H^f$ we may choose $\ql'\,=\,\ql\lh$ as an equivalent base point, which leads to a different choice of $\lT^0$ 
and accordingly, to another realization of $\cl.$ It is easy to see that now $\cl$ is replaced by 
$\cl':=I^H_\lh\oc\cl.$
\end{proof}

\section{Hermitian symmetric spaces}

Now suppose that $M\,=\,G/K$ is a hermitian symmetric space. Then the complexified tangent space
$T_o^\Cl$ has a decomposition
$$T_o^\Cl M\,=\,Z\xx\overline{Z}\, ,$$
where $Z\,=\,T_o^{1,0}M$ carries the structure of a hermitian Jordan triple, and
$\overline{Z}\,=\,T_o^{0,1}M.$ For background on Jordan triples and the Jordan theoretic realization of bounded symmetric
domains, we refer to \cite{FK,L,U}. The `real' tangent space is recovered as the diagonal
$$T_oM\,=\,\{v+\overline{v}\,\mid\ v\,\in\, Z\}\, .$$
In this realization, we have
$$(d_oL_k^M)(v+\overline{v})\,=\,kv+\overline{kv}$$
for all $k\,\in\, K$; this way we obtain the entire
identity component of the Jordan triple automorphism group of $Z.$ Every real linear map 
$\cl\,:\,T_oM\,\longrightarrow\,{\mathfrak h}$ has a unique decomposition
$$\cl(v+\overline{v})\,=\,\al v+\bl\overline{v}\, ,$$
where $\al\,:\,Z\,\longrightarrow\,{\mathfrak h}$ and $\bl\,:\,\overline{Z}\,\longrightarrow\,{\mathfrak h}$ are linear
(respectively, anti-linear) mappings. For $f\,:\,K
\,\longrightarrow\, H$, if $\cl$ is $f$-covariant, the identity
$$\al(kv)+\bl(\overline{kv})\,=\,\cl((d_oL_k^M)(v+\overline{v}))\,=\,Ad_{f(k)}^H\cl(v+\overline{v})\,=\,Ad_{f(k)}^H\al(v)+Ad_{f(k)}^H\bl(\overline{v})$$
shows that $\al$ and $\bl$ are $f$-covariant in the sense that
$$\al(kv)\,=\,Ad_{f(k)}^H\al(v),\quad\bl(\overline{kv})\,=\,Ad_{f(k)}^H\bl(\overline{v})$$
for all $k\,\in\, K.$ Here we use that $L_k^M$ is holomorphic and $Ad_{f(k)}^H$ is $\Cl$-linear. We write $\cl
\,=\,(\al,\,\bl)$ and
$\lT^\cl\,=\,\lT^{\al,\bl}.$ Let $\LL(Z,\,{\mathfrak h})^f$ (respectively, $\LL(\overline{Z},\,{\mathfrak h})^f$) denote the $\Cl$-vector space of
all $\Cl$-linear (respectively, anti-linear) mappings which are $f$-covariant. The group $H^f$ defined in
\eqref{9} acts on both $\LL(Z,\,{\mathfrak h})^f$ and $\LL(\overline{Z},\,{\mathfrak h})^f.$ If we have $f\,:\,K\,\longrightarrow\, H_\Rl$, then the
involution $\lT\,\longmapsto\,\lT^\#$ is realized as
$$(\lT^{\al,\bl})^\#\,=\,\lT^{\u\#\oc\bl,\u\#\oc\al}\, .$$
Moreover, the $f$-covariant maps 
$\cl\,:\,T_oM\,\longrightarrow\,{\mathfrak h}_\Rl$ have the form
$$\cl(v+\overline{v})\,=\,\al(v)+\u\#\al(v)\, ,$$ 
where $a\,\in\,\LL(Z,\,{\mathfrak h})^f.$ 

Applying Theorem \ref{b} we obtain the following:

\begin{theorem}\label{f} Let $M\,=\,G/K$ be hermitian symmetric. For any homomorphism $f\,:\,K
\,\longrightarrow\, H$ there is a canonical bijection
$$\big(\LL(Z,\,{\mathfrak h})^f\xx\LL(\overline{Z},\,{\mathfrak h})^f\big)/H^f\, \stackrel{\sim}{\longrightarrow}\,
\CL(Q)^G,\quad(\al,\,\bl)\,\longmapsto\,\lT^{\al,\bl},$$
which classifies all invariant connections on the homogeneous $H$-bundle $Q\,=\,G\xx_{K,f}H.$ If $f\,:\,K
\,\longrightarrow\, H_\Rl,$ there is a canonical bijection
$$\LL(Z,\,{\mathfrak h})^f/H^f\, \stackrel{\sim}{\longrightarrow}\,\CL_\Rl(Q)^G,\quad\al\,\longmapsto\,\lT^{\al,\#\al}\, ,$$
which classifies all invariant hermitian connections on the hermitian structure $(Q,\, Q_\Rl).$
\end{theorem}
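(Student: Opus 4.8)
The plan is to derive both bijections from Theorem \ref{b} by feeding in the decomposition $T_o^\Cl M\,=\,Z\xx\overline{Z}$ and the discussion of $f$-covariant maps preceding the statement. For the first bijection, send a real-linear map $\cl\,:\,T_oM\,\longrightarrow\,{\mathfrak h}$ to the pair $(\al,\,\bl)$ consisting of the restrictions to $Z$ and to $\overline{Z}$ of its $\Cl$-linear extension $\cl^\Cl\,:\,T_o^\Cl M\,\longrightarrow\,{\mathfrak h}$. As recalled above, this is a $\Cl$-linear isomorphism
$$\LL(T_oM,\,{\mathfrak h})^f\,\stackrel{\sim}{\longrightarrow}\,\LL(Z,\,{\mathfrak h})^f\xx\LL(\overline{Z},\,{\mathfrak h})^f,$$
carrying the $f$-covariance condition on $\cl$ to those on $\al$ and $\bl$; moreover it is $H^f$-equivariant, since the $H^f$-action is in all three cases post-composition with the $\Cl$-linear map $Ad_\lh^H$, $\lh\,\in\, H^f.$ Passing to $H^f$-orbit spaces and composing with the first bijection $\cl\,\longmapsto\,\lT^\cl$ of Theorem \ref{b} yields the asserted bijection $\big(\LL(Z,\,{\mathfrak h})^f\xx\LL(\overline{Z},\,{\mathfrak h})^f\big)/H^f\,\stackrel{\sim}{\longrightarrow}\,\CL(Q)^G$, which by construction sends $(\al,\,\bl)$ to $\lT^\cl\,=\,\lT^{\al,\bl}.$

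For the hermitian bijection I would proceed in three steps. First, since $\lk\oc\li\,=\,\li$ the extension map $\li_*$ is $G$-equivariant, and it is injective by \eqref{11} (it recovers $\lX$ from $\li^*(\li_*\lX)\,=\,\u\li\oc\lX$); hence $\li_*$ restricts to a bijection $\CL(Q_\Rl)^G\,\stackrel{\sim}{\longrightarrow}\,\CL_\Rl(Q)^G$, which composed with the second bijection of Theorem \ref{b} classifies $\CL_\Rl(Q)^G$ by $\cl\,\longmapsto\,\li_*\lX^\cl$, where $\cl$ ranges over $\LL(T_oM,\,{\mathfrak h}_\Rl)^f$ modulo the $Ad$-action of $H_\Rl^f.$ Second, because $f(K)\,\subset\, H_\Rl$ the anti-linear involution $\u\#$ commutes with every $Ad_{f(k)}^H$, and this makes $\al\,\longmapsto\,\cl_\al$, where $\cl_\al(v+\overline{v})\,:=\,\al v+\u\#(\al v)$, a bijection $\LL(Z,\,{\mathfrak h})^f\,\stackrel{\sim}{\longrightarrow}\,\LL(T_oM,\,{\mathfrak h}_\Rl)^f$ with inverse $\cl\,\longmapsto\,\cl^\Cl|_Z$ --- this is precisely the normal form for ${\mathfrak h}_\Rl$-valued $f$-covariant maps noted before the theorem. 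Third, choosing the base point so that $\lT^0\,=\,\li_*\lX^0$ and using the Proposition relating $\li_*\lX^\lo$ to $\lT^{\u\li\oc\lo}$ together with the identity $(\lT^{\al,\bl})^\#\,=\,\lT^{\u\#\oc\bl,\u\#\oc\al}$, one identifies $\li_*\lX^{\cl_\al}$ with $\lT^{\al,\#\al}.$ Composing the three steps gives the stated map $\al\,\longmapsto\,\lT^{\al,\#\al}.$

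The step that requires genuine care --- and the main obstacle --- is the bookkeeping of group actions in the hermitian case. One must check that the bijection $\LL(Z,\,{\mathfrak h})^f\,\cong\,\LL(T_oM,\,{\mathfrak h}_\Rl)^f$ of the second step intertwines the $Ad$-actions of $H_\Rl^f$ on the two sides, which works precisely because $Ad_\lh^H$ commutes with $\u\#$ exactly when $\lh\,\in\, H_\Rl$; and then one must reconcile the $H_\Rl^f$-orbit space thus obtained with the description of $\CL_\Rl(Q)^G$ as the fixed-point set of the period-$2$ map $\lT\,\longmapsto\,\lT^\#$ inside $\CL(Q)^G\,\cong\,\big(\LL(Z,\,{\mathfrak h})^f\xx\LL(\overline{Z},\,{\mathfrak h})^f\big)/H^f$. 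Under the first bijection that involution is induced by the anti-linear map $(\al,\,\bl)\,\longmapsto\,(\u\#\bl,\,\u\#\al)$, whose fixed locus is the graph $\{(\al,\,\u\#\al)\,\mid\,\al\,\in\,\LL(Z,\,{\mathfrak h})^f\}$, and one has to verify that the image of this graph in the $H^f$-quotient is exactly $\CL_\Rl(Q)^G$ and to pin down precisely when two choices of $\al$ yield the same invariant hermitian connection. Everything else is a formal consequence of Theorem \ref{b} and the splitting $T_o^\Cl M\,=\,Z\xx\overline{Z}.$
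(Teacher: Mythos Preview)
Your approach is exactly the paper's: the theorem is recorded there simply as ``Applying Theorem~\ref{b} we obtain the following,'' i.e.\ as an immediate consequence of Theorem~\ref{b} together with the decomposition $\cl\,=\,(\al,\bl)$ and the normal form $\cl(v+\overline v)\,=\,\al v+\u\#\al v$ spelled out just before the statement. Your write-up is more detailed than the paper's one-line proof, and the subtlety you flag --- that the hermitian bijection is stated with the full $H^f$ rather than $H_\Rl^f$, so one must check that two $\al$'s give the same $\lT^{\al,\#\al}$ precisely when they lie in the same $H^f$-orbit --- is genuine and is not addressed in the paper either; if you want to close this gap, the cleanest route is to identify $\CL_\Rl(Q)^G$ with the fixed locus of $(\al,\bl)\mapsto(\u\#\bl,\u\#\al)$ on the $H^f$-quotient and argue that every fixed class has a representative on the graph $\{(\al,\u\#\al)\}$.
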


\begin{corollary}\label{i} For any hermitian symmetric space $M\,=\,G/K,$ the space $\CL(Q)^G$ is a normal
variety, such that the involution $\lT\,\longmapsto\,\lT^\#$ is anti-holomorphic.
\end{corollary}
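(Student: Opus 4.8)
The plan is to obtain a normal affine variety structure on $\CL(Q)^G$ directly from the classification in Theorem~\ref{f}, which identifies $\CL(Q)^G$ with the quotient by $H^f$ of the set $V\,:=\,\LL(Z,\,{\mathfrak h})^f\xx\LL(\overline{Z},\,{\mathfrak h})^f$. Here $V$ is a finite-dimensional complex vector space, because $Z$, $\overline{Z}$ and ${\mathfrak h}$ are finite-dimensional and $f$-covariance is a system of $\Cl$-linear conditions; and the $H^f$-action on $V$ (composition with the inner automorphisms $I^H_\lh$, $\lh\in H^f$) is linear and algebraic, being induced by the adjoint action on ${\mathfrak h}$. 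Accordingly I would equip $\CL(Q)^G$ with the structure of the affine GIT quotient $V/\!/H^f\,:=\,\mathrm{Spec}\,\Cl[V]^{H^f}$.

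The crucial step is to show that $H^f$ is a complex reductive group. Since $K$ is compact, $f(K)$ is a compact subgroup of $H$, and after replacing $f$ by a conjugate (which changes $Q$ only up to isomorphism, by the remark following \eqref{6}) I may assume $f(K)\,\subseteq\, H_\Rl.$ Then $H^f\,=\,Z_H(f(K))$ is the centralizer, in the reductive group $H$, of a subgroup with reductive Zariski closure, hence is reductive by the standard structure theory of reductive groups. (Concretely: $\#$ fixes $f(K)$ pointwise, so $H^f$ is $\#$-stable and contains the compact subgroup $H^f\ui H_\Rl\,=\,Z_{H_\Rl}(f(K))$, whose real dimension equals $\dim_\Cl H^f$ since $Ad_{f(k)}^H$ preserves the real form ${\mathfrak h}_\Rl\subseteq{\mathfrak h}$; and a complex algebraic group with a compact subgroup of full real dimension is reductive.) In particular $H^f$ is $\#$-stable, a fact I use below.

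Granting reductivity, the remaining assertions are routine. By Nagata's theorem $\Cl[V]^{H^f}$ is a finitely generated $\Cl$-algebra, so $V/\!/H^f$ is an affine variety; it is normal, because $\Cl[V]$ is a polynomial ring, hence an integrally closed domain, while $\Cl[V]^{H^f}\,=\,\Cl[V]\,\ui\,\Cl(V)^{H^f}$ with $\mathrm{Frac}\big(\Cl[V]^{H^f}\big)\,\subseteq\,\Cl(V)^{H^f}$, so any element of $\mathrm{Frac}\big(\Cl[V]^{H^f}\big)$ integral over $\Cl[V]^{H^f}$ is integral over $\Cl[V]$, hence lies in $\Cl[V]$, and being also in $\Cl(V)^{H^f}$ it lies in $\Cl[V]\,\ui\,\Cl(V)^{H^f}\,=\,\Cl[V]^{H^f}.$ For the anti-holomorphic involution I would use the formula $(\lT^{\al,\bl})^\#\,=\,\lT^{\u\#\oc\bl,\,\u\#\oc\al}$ recorded above: via the identification of $Z$ with $\overline{Z}$ it exhibits $\sigma\,:\,(\al,\,\bl)\,\longmapsto\,(\u\#\oc\bl,\,\u\#\oc\al)$ as a well-defined anti-linear involution of $V$ which preserves $f$-covariance (because $f(k)^\#=f(k)$ for $k\in K$) and satisfies $\sigma(\lh\cdot w)\,=\,\lh^\#\cdot\sigma(w)$ for all $\lh\in H^f$, $w\in V.$ Since $H^f$ is $\#$-stable, $\lh\,\longmapsto\,\lh^\#$ is an anti-holomorphic automorphism of $H^f$, so $\sigma$ descends to an anti-holomorphic involution of $V/\!/H^f$, and by the displayed formula this coincides with $\lT\,\longmapsto\,\lT^\#.$

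The step I expect to be the main obstacle is the reductivity of $H^f$; it is where the structure theory of reductive groups (or, equivalently, the compact real form) has to be invoked. Everything else is standard invariant theory together with some bookkeeping about the identifications in Theorem~\ref{f} and the identification of $Z$ with $\overline{Z}$ used for $\sigma.$ In particular, one should be explicit that the ``variety structure on $\CL(Q)^G$'' is that of the categorical quotient $V/\!/H^f$, so that the set-theoretic quotient produced by Theorem~\ref{f} is understood modulo identifying $H^f$-orbits with the same closure.
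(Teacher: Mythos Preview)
Your proposal is correct and follows essentially the same route as the paper: identify $\CL(Q)^G$ via Theorem~\ref{f} with the quotient of the linear space $\LL(Z,{\mathfrak h})^f\times\LL(\overline{Z},{\mathfrak h})^f$ by $H^f$, and then appeal to invariant theory for normality. The paper's own proof is terser---it simply cites \cite[Theorem 16.1.1 and Lemma 16.1.2]{BB} for normality and does not spell out the reductivity of $H^f$ or the anti-holomorphicity argument---so your version supplies exactly the details one would want.
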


\begin{proof} By Theorem \ref{f},
$$\CL(Q)^G\,=\,\big(\LL(Z,\,{\mathfrak h})^f\xx\LL(\overline{Z},\,{\mathfrak h})^f\big)/H^f$$
is a quotient variety of a linear space under the action of an algebraic subgroup $H^f$ of $H.$ By 
\cite[Theorem 16.1.1 and Lemma 16.1.2]{BB}, this is a normal variety.
\end{proof}

Now we turn to the classification of {\bf pure} connections. The group $G$ acts by holomorphic automorphisms $L_g^M$ on the hermitian symmetric space $M.$ The complexification of ${\mathfrak p}$ has a decomposition
$${\mathfrak p}^\Cl\,:=\,{\mathfrak p}\xt_\Rl \Cl\,=\, {\mathfrak p}_+\oplus{\mathfrak p}_-\, .$$
This decomposition produces the Hodge type decomposition of the complexified tangent bundle
$$T(G/K)\xt_\Rl\Cl\,=\, T^{1,0}(G/K)\oplus T^{0,1}(G/K)\, .$$
The complexification of the composition in \eqref{20} vanishes on
both ${\mathfrak p}_+\xt{\mathfrak p}_+$ and ${\mathfrak p}_-\xt{\mathfrak p}_-$. Consequently, both the 
$(2,\,0)$ and $(0,\,2)$ type components of the curvature of $\lG^0$ vanish. Therefore,
the curvature of $\lG^0$ is of Hodge type $(1,\,1)$, showing that the connection $\lG^0,$ extended to $G\xx_K K^\Cl$ via the embedding $K\,\subset\, K^\Cl,$ is pure. This implies that the induced tautological connection $\lT^0$ on $Q$ is also pure. In more detail:

\begin{proposition}\label{c}
Let $M\,=\,G/K$ be a hermitian symmetric space. For any homomorphism $f\,:\,K
\,\longrightarrow\, H$ and any $\lh\,\in\, H^f,$ the tautological connection $\lT^0\,=\,(f_\lh)_*\lG^0$ on $Q$ is pure.

If $f\,:\,K\,\longrightarrow\, H_\Rl$, and $\lh\,\in\, H_\Rl^f,$ then the tautological connection 
$\lX^0\,=\,(f_\lh)_*\lG^0$ on $Q_\Rl$ has a pure hermitian extension $\li_*\lX^0.$
\end{proposition}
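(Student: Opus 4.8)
The plan is to first record that the tautological connection $\lG^0$ on the principal $K$-bundle $\Gl\,=\,G\xx_K K$ has curvature of Hodge type $(1,\,1)$, and then transport this along the bundle map $f_\lh$ by means of the functoriality identity \eqref{12}. For the first point, recall from the discussion preceding the proposition that, under the identification $G\,\cong\,\Gl$, the curvature $\u{\lG^0}$ is the left-translate of the $K$-equivariant composition \eqref{20}, i.e. of ${\mathfrak p}\xt{\mathfrak p}\,\longrightarrow\,{\mathfrak g}\,\longrightarrow\,{\mathfrak k}$, so that the associated $\text{ad}(\Gl)$-valued $2$-form $\w{\u{\lG^0}}$ takes the value \eqref{20} at the base point $o$. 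Complexifying, ${\mathfrak p}^\Cl\,=\,{\mathfrak p}_+\oplus{\mathfrak p}_-$ with $[{\mathfrak p}_\pm,\,{\mathfrak p}_\pm]\,=\,0$ (this is exactly the hermitian hypothesis), so the $\Cl$-bilinear extension of \eqref{20} already vanishes on ${\mathfrak p}_+\xt{\mathfrak p}_+$ and on ${\mathfrak p}_-\xt{\mathfrak p}_-$, even before composing with the projection onto ${\mathfrak k}^\Cl$. Since under left translations ${\mathfrak p}_+\,=\,T_o^{1,0}M$ and ${\mathfrak p}_-\,=\,T_o^{0,1}M$, this says that $\w{\u{\lG^0}}_o$ has vanishing $(2,\,0)$ and $(0,\,2)$ components; by $G$-invariance of $\lG^0$ and holomorphy of the translations $L_g^M$, the same holds at every point, so $\lG^0$ — regarded, as in the preceding text, as a connection on the $K^\Cl$-bundle $G\xx_K K^\Cl$ — is pure in the sense of \eqref{15}.

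Next I would push this forward. The map $f_\lh\,:\,\Gl\,\longrightarrow\, Q$, $g\,\longmapsto\,[g:\,\lh]$, is a bundle morphism over $id_M$ associated to $f\,:\,K\,\longrightarrow\, H$, and $\lT^0\,=\,(f_\lh)_*\lG^0$ by definition. By \eqref{12} we have $f_\lh^*(\u{\lT^0})\,=\,\u f\oc\u{\lG^0}$, where $\u f\,:\,{\mathfrak k}\,\longrightarrow\,{\mathfrak h}$ is the differential of $f$; since ${\mathfrak h}$ is a \emph{complex} vector space, $\u f$ extends to a $\Cl$-linear map ${\mathfrak k}^\Cl\,\longrightarrow\,{\mathfrak h}$. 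Reading \eqref{12} off on the base in the usual way (passing from tensorial forms on the total space to bundle-valued forms on $M$): since $f_\lh$ covers $id_M$, the fibrewise-$\Cl$-linear bundle map $\w{f_\lh}\,:\,\text{ad}(\Gl)\,\longrightarrow\, Q\xx_H{\mathfrak h}$, $[g:\,\lb]\,\longmapsto\,[f_\lh(g):\,\u f\lb]$, satisfies $\w{\u{\lT^0}}_z\,=\,\w{f_\lh}\oc\w{\u{\lG^0}}_z$ for every $z\,\in\, M$. Composing a $2$-form whose $(2,\,0)$ and $(0,\,2)$ parts vanish with a bundle map that is $\Cl$-linear on fibres preserves that vanishing, so $\w{\u{\lT^0}}_z$ is of type $(1,\,1)$ for all $z$; hence $\lT^0$ is pure. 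This reasoning is uniform in $\lh\,\in\, H^f$ (the choice of $\lh$ only changes the bundle map $f_\lh$, not the Hodge type of the resulting curvature), which settles the first assertion.

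For the hermitian statement, take $f\,:\,K\,\longrightarrow\, H_\Rl$ and $\lh\,\in\, H_\Rl^f$. By functoriality of the induced-connection operation under composition of bundle maps, $\li_*\lX^0\,=\,\li_*\big((f_\lh)_*\lG^0\big)\,=\,(\li\oc f_\lh)_*\lG^0$, and $\li\oc f_\lh\,:\,\Gl\,\longrightarrow\, Q_\Rl\,\hookrightarrow\, Q$ is precisely the bundle map $g\,\longmapsto\,[g:\,\lh]\,\in\, Q$ associated to the composition $K\,\longrightarrow\, H_\Rl\,\hookrightarrow\, H$. Thus $\li_*\lX^0$ coincides with the tautological connection $\lT^0$ on $Q$ attached to this data, which is pure by the first part; and $\li_*\lX^0$ lies in $\CL_\Rl(Q)$ by the very definition of a hermitian connection. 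Hence $\li_*\lX^0$ is a pure hermitian extension of $\lX^0$, as claimed.

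The genuine content is the already-cited Lie-theoretic fact $[{\mathfrak p}_\pm,\,{\mathfrak p}_\pm]\,=\,0$; the rest is bookkeeping. The one step needing care is that $\lG^0$ carries the \emph{compact} structure group $K$, so purity only literally makes sense after complexifying the coefficient bundle (equivalently, after extending to $G\xx_K K^\Cl$), and one must observe that the extended differential $\u f$ of $f\,:\,K\,\longrightarrow\, H$ is $\Cl$-linear — which it is, since ${\mathfrak h}$ is complex — so that the push-forward cannot manufacture $(2,\,0)$ or $(0,\,2)$ curvature. I do not expect any serious obstacle beyond this.
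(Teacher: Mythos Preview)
Your proposal is correct and follows essentially the same route as the paper: show that $\lG^0$ has $(1,1)$ curvature via $[{\mathfrak p}_\pm,{\mathfrak p}_\pm]=0$, then transport this to $\lT^0$ through the bundle map $f_\lh$ using \eqref{12} and the fibrewise $\Cl$-linear map $\w{f_\lh}$ (the paper calls it $\t f_\lh$), and finally handle the hermitian extension by composing with $\li$. The only cosmetic difference is that the paper rederives $[{\mathfrak p}_\pm,{\mathfrak p}_\pm]=0$ inside the proof in its Jordan-theoretic incarnation (constant vector fields commute; quadratic vector fields commute by the Jordan triple identity), whereas you cite it directly as the hermitian hypothesis --- which the paper itself also does in the paragraph preceding the proposition.
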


\begin{proof}
By \cite[Proposition II.6.2]{KN} the connection 1-form and curvature 2-form are related by
$$f_\lh^*\lT^0\,=\,f_\lh^*((f_\lh)_*\lG^0)\,=\,\u f\oc\lG^0\, ,$$
$$f_\lh^*\u\lT^0\,=\,f_\lh^*\u{(f_\lh)_*\lG^0}\,=\,\u f\oc\u\lG^0\, .$$
Thus we have a commuting diagram
\begin{equation}\label{27}
\xymatrix{\bigwedge^2 T_gG\ar[r]^{d_gf_\lh}\ar[d]_{\u\lG^0_g}&\bigwedge^2
T_qQ\ar[d]^{\u\lT^0_q}\\ {\mathfrak k}\ar[r]_{\u f}&{\mathfrak h}}
\end{equation}
where $g\,\in\, G$ and $q\,=\,f_\lh(g).$ Consider the induced vector bundle
$$G\xx_K{\mathfrak k}\,=\,\{[g:\,\la]\,=\,[gk:\,Ad^K_{k^{-1}}\la]\,\mid\ g\,\in\, G,\ \la\,\in\,{\mathfrak k},\ k\,\in\, K\}\, .$$
Then $f_\lh$ induces a vector bundle homomorphism $\t f_\lh\,:\,G\xx_K{\mathfrak k}\,\longrightarrow\, Q\xx_H{\mathfrak h}$ over
$M,$ defined by
$$\t f_\lh[g,\,\la]\,:=\,[f_\lh(g),\,\u f\la]\, .$$
Now let $v_1,\,v_2\,\in \,T_zM,$ where $z\,=\,g(o).$ Choose $w_i\,\in\, T_gG$ with $(d_gR^M_o)w_i
\,=\,v_i.$ Define $\t v_i\,:=\,(d_g f_\lh)w_i.$ Then the commuting diagram in \eqref{27} implies that
$$(d_q\lp)\t v_i\,=\,(d_q\lp)(d_g f_\lh)w_i\,=\,(d_gR^M_o)w_i\,=\,v_i\, .$$
It now follows that
$$\w{\u\lT}^0_z(v_1,\,v_2)\,=\,[q:\,\u\lT^0_q(\t v_1,\,\t v_2)]\,=\,[q:\,\u\lT_q^0((d_g f_\lh)w_1,\,(d_g f_\lh)w_2)]$$
$$=\, [q:\,(f_\lh^*\u\lT^0)_g(w_1,\,w_2)]\,=\,[q:\,\u f(\u\lG^0_g(w_1,\,w_2))]
\,=\,\t f_\lh\big(\w{\u\lG}^0_z(v_1,\,v_2)\big)\, .$$

Consider the complexified principal bundle $G\xx_K{\mathfrak k}^\Cl$ and the map $\t f_\lh\,:\,G\xx_K{\mathfrak k}^\Cl
\,\longrightarrow\, Q\xx_H{\mathfrak h}$ which is $\Cl$-linear on the fibers.
Passing to the complexification of $\u\lG^0,$ it follows that the relation
$$\w{\u\lT}^0_z(v_1,\,v_2)\,=\,\t f_\lh(\w{\u\lG}^0_z(v_1,\,v_2))$$
also holds for complexified tangent vectors $v_i\,\in\, T_z^\Cl M.$ There exist $u_i
\,\in\, T_oM$ such that $v_i\,=\,T_o(L_g^M)u_i.$ Since $\lG^0$ is invariant, we have
$$\w{\u\lG}^0_z(v_1,\,v_2)\,=\,\w{\u\lG}^0_{g(o)}((d_oL_g^M)u_1,\,(d_oL_g^M)u_2)\,=\,
\w{\u\lG}^0_o(u_1,u_2)\,=\,0$$
because the constant vector fields 
$$u_i\f{\dl}{\dl z}\,\in\,{\mathfrak g}^\Cl$$ 
commute. Similarly, we have
$$\w{\u\lT}^0_z(\overline{v}_1,\,\overline{v}_2)\,=\,\t f_\lh(\w{\u\lG}^0_z(\overline{v}_1,\,\overline{v}_2))$$
for anti-holomorphic tangent vectors, and 
$$\w{\u\lG}^0_z(\overline{v}_1,\,\overline{v}_2)\,=\,\w{\u\lG}^0_{go}(d_oL_g^M)\overline{u}_1,\ \
(d_oL_g^M)\overline{u}_2)\,=\,\w{\u\lG}^0_o(\overline{u}_1,\,\overline{u}_2)\,=\,0$$
since the quadratic vector fields 
$$\overline{u}_i\,=\,\{z;u_i;z\}\f{\dl}{\dl z}\,\in\,{\mathfrak g}^\Cl$$ 
commute as a consequence of the Jordan triple identity.
\end{proof}

The following theorem is our first main result.

\begin{theorem}\label{g}
Let $M\,=\,G/K$ be a hermitian symmetric space. Consider the homogeneous complex
principal $H$-bundle $Q\,=\,G\xx_{K,f}H$ induced by the homomorphism $f\,:\,K\,\longrightarrow\, H.$ Then
an invariant connection $\lT^\cl\,=\,\lT^{\al,\bl},$ associated with an $f$-covariant (real) linear map
$$\cl\,=\,\al+\bl\,:\,Z_\Rl\,\longrightarrow\,{\mathfrak h}$$ is pure if and only if
\begin{equation}\label{29}
\al\wedge\al\,=\,0\,=\, \bl\wedge\bl\, .
\end{equation}
\end{theorem}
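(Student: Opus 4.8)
The plan is to test purity of $\lT^\cl$ only at the base point $o$ and then to read off the $(2,0)$ and $(0,2)$ Hodge components of its curvature there. First I would observe that, since $\lT^\cl\,\in\,\CL(Q)^G$ and $G$ acts holomorphically on $M$, the identity established in the proof of Proposition~\ref{ppc} specializes to
$$\w{\u{\lT^\cl}}_{g(z)}(v_1,\,v_2)\,=\,\w g\cdot\w{\u{\lT^\cl}}_z\big((d_{g(z)}L^M_{g^{-1}})v_1,\,(d_{g(z)}L^M_{g^{-1}})v_2\big)\, .$$
Because each $L^M_{g^{-1}}$ is holomorphic and the action \eqref{18} is $\Cl$-linear on fibers, $\w{\u{\lT^\cl}}_{g(z)}$ is of Hodge type $(1,1)$ exactly when $\w{\u{\lT^\cl}}_z$ is; as $G$ is transitive on $M\,=\,G/K$, it suffices to prove that $\w{\u{\lT^\cl}}_o$ is of type $(1,1)$ if and only if \eqref{29} holds.

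To compute $\w{\u{\lT^\cl}}_o$ I would pull $\lT^\cl$ back along the equivariant bundle map $f_\lh\,:\,\Gl\cong G\,\longrightarrow\, Q$, mimicking the proof of Proposition~\ref{c}. Since $f_\lh$ intertwines the $G$-actions, $f_\lh^*\lT^\cl$ is a \emph{left-invariant} ${\mathfrak h}$-valued $1$-form on $G$; writing $\lT^\cl\,=\,\lT^0+\lo$ with $\lo$ the invariant $1$-form attached to $\cl$ by Proposition~\ref{a}, its value at $e$ is the linear map $\lambda\,:\,{\mathfrak g}\,=\,{\mathfrak k}\oplus{\mathfrak p}\,\longrightarrow\,{\mathfrak h}$ equal to $\u f$ on ${\mathfrak k}$ (every connection form restricts to the Maurer-Cartan form on fibers) and to $\cl$ on ${\mathfrak p}\cong T_oM$ (by \eqref{21} together with $\lG^0_e|_{\mathfrak p}\,=\,0$). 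By naturality of $d$ and of the bracket, $f_\lh^*\u{\lT^\cl}$ is the curvature $d(f_\lh^*\lT^\cl)+\f12[f_\lh^*\lT^\cl\wedge f_\lh^*\lT^\cl]$ of this left-invariant form, which by the Maurer-Cartan equation equals $[\lambda X,\,\lambda Y]-\lambda([X,\,Y])$ for $X,\,Y\,\in\,{\mathfrak g}$. Restricting to $X,\,Y\,\in\,{\mathfrak p}$ and invoking the symmetric-space identity $[{\mathfrak p},\,{\mathfrak p}]\,\subset\,{\mathfrak k}$ --- which is exactly what makes the term linear in $\cl$ vanish --- this collapses to $[\cl X,\,\cl Y]-\u f([X,\,Y])$. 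Transporting through the diagram \eqref{27} and the base-point identification $(Q\xx_H{\mathfrak h})_o\cong{\mathfrak h}$, $\lb\mapsto[\ql:\,\lb]$, I obtain
$$\w{\u{\lT^\cl}}_o(v_1,\,v_2)\,=\,[\cl v_1,\,\cl v_2]+\w{\u{\lT^0}}_o(v_1,\,v_2)\, ,$$
where the bracket is the fiberwise Lie bracket of the bundle of Lie algebras $Q\xx_H{\mathfrak h}$, and the $\cl$-free term $\w{\u{\lT^0}}_o$ (the case $\cl\,=\,0$) is of type $(1,1)$ by Proposition~\ref{c}. Getting this curvature formula right is the crux of the argument; the rest is bookkeeping.

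Finally I would take Hodge components. The $\Cl$-bilinear extension of $(v_1,\,v_2)\mapsto[\cl v_1,\,\cl v_2]$ is $(w_1,\,w_2)\mapsto[\cl^\Cl w_1,\,\cl^\Cl w_2]$, where $\cl^\Cl$ denotes the $\Cl$-linear extension of $\cl$, and by the construction of the splitting $\cl(v+\overline{v})\,=\,\al v+\bl\overline{v}$ one has $\cl^\Cl|_Z\,=\,\al$ and $\cl^\Cl|_{\overline{Z}}\,=\,\bl$. Since $\w{\u{\lT^0}}_o$ contributes nothing to the $(2,0)$ and $(0,2)$ parts, $\w{\u{\lT^\cl}}_o$ is of type $(1,1)$ if and only if $[\al v_1,\,\al v_2]\,=\,0$ for all $v_1,\,v_2\,\in\,Z\,=\,T^{1,0}_oM$ and $[\bl\overline{v}_1,\,\bl\overline{v}_2]\,=\,0$ for all $\overline{v}_1,\,\overline{v}_2\,\in\,\overline{Z}\,=\,T^{0,1}_oM$, that is, $\al\wedge\al\,=\,0\,=\,\bl\wedge\bl$. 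Together with the reduction of the first paragraph, this proves the theorem.
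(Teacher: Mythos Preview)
Your proof is correct, and it takes a genuinely different route from the paper. The paper expands the curvature as $\u{\lT^\cl}\,=\,\u{\lT^0}+D^0\lo+\lo\wedge\lo$ and then spends most of the argument establishing $D^0\lo\,=\,0$: first via two lemmas (Lemmas~\ref{d} and~\ref{e}) computing $\lo\lh^Q$ and its derivative along $\lT^0$-horizontal lifts of Killing fields, and afterwards (as an alternative) via a weight argument on the center $\Tl\subset K$ acting on $({\mathfrak h}_+\oplus{\mathfrak h}_-)\xt\bigwedge^2{\mathfrak p}^\Cl$. You bypass the covariant-derivative term entirely by pulling the whole connection back to a left-invariant ${\mathfrak h}$-valued $1$-form on $G$ and invoking Maurer--Cartan; the symmetric-space identity $[{\mathfrak p},{\mathfrak p}]\subset{\mathfrak k}$ then forces $\lambda([X,Y])\,=\,\u f([X,Y])$ for $X,Y\in{\mathfrak p}$, so no term linear in $\cl$ ever appears. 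Your approach is shorter and makes the role of the symmetric-space hypothesis transparent at exactly one step; the paper's decomposition, on the other hand, isolates $D^0\lo$ as an independent object and gives two separate mechanisms for killing it, which may be useful if one later wants to relax the symmetric-space assumption or track the individual curvature components.
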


\begin{proof} Write $\lT^\cl\,=\,\lT^0+\lo.$ Then \cite[Theorem II.5.2 and Proposition II.5.5]{KN} imply that
$$\u\lT^\cl_q(X,\,Y)\,=\,(d\lT^\cl)_q(X,\,Y)+\f12[\lT^\cl_q X,\,\lT^\cl_q Y]\,=\,(d\lT^0+d\lo)_q(X,\,Y)
$$
$$
+\f12[(\lT^0_q+\lo_q)X,\,(\lT^0_q+\lo_q)Y]
\,=\,(d\lT^0)_q(X,\,Y)+\f12[\lT^0_qX,\,\lT^0_qY]+(d\lo)_q(X,\,Y)
$$
$$
+\f12[\lT^0_qX,\, \lo_qY]+\f12[\lo_qX,\, \lT^0_qY]+\f12[\lo_qX,\, \lo_qY]$$
$$=\,\u \lT^0_q(X,\,Y)+(D^0\lo)_q(X,\,Y)+(\lo\wedge\lo)_q(X,\,Y)$$
for all $X,\,Y\,\in\, T_qQ,$ where $D^0\lo$ is the covariant derivative of the tensorial 1-form $\lo$ with respect 
to $\lT^0.$ By Proposition \ref{c}, the curvature $\u\lT^0$ is of type $(1,\,1)$. Thus the main point of the proof is 
to show that
\begin{equation}\label{28}
D^0\lo\,=\,0\, .
\end{equation}

The proof of \eqref{28}
presented here is a simplification of a similar argument in \cite[Theorem 5.2]{BU}. For
any $\lg\,\in\,{\mathfrak g}$ consider the induced vector field 
$$\lg_x^M\,=\,\f{\dl}{\dl t}\big|_{t=0}(\exp(t\lg)x)\,=\,\f{\dl}{\dl t}\big|_{t=0}(R_x^M\exp(t\lg))\,
=\,(d_eR_x^M)\lg$$
on $M$ induced by the left $G$-action. Let $\lg^Q$ denote the $\lT^0$-horizontal lift of $\lg^M.$ 

The following two lemmas would be needed to prove \eqref{28}.

\begin{lemma}\label{d} For any $g\,\in\, G$ and $q\,=\,g\ql\,=\,\t f(g)$, the following holds:
$$(\lo\lg^Q)(\t f(g))\,=\,\cl((d_eR_o^M)Ad^G_{g^{-1}}\lg)\, .$$
\end{lemma}

\begin{proof}
Put $z\,=\,g(o)\,=\,\lp(q).$ The identity $L_{g^{-1}}^M\oc R_z^M\,=\,R_o^M\oc I^G_{g^{-1}}$ implies 
that $$(d_zL_{g^{-1}}^M)\lg_z\,=\,(d_zL_{g^{-1}}^M)(d_eR_z^M)\lg\,=\,(d_eR_o^M)Ad^G_{g^{-1}}\lg$$ and hence
$$[q:\,(\lo\lg^Q)(q)]\,=\,[q:\,\lo_q\lg^Q_q]\,=\,\w\lo_z(d_q\lp)\lg^Q_q
\,=\,\w\lo_z\lg_x\,=\,[q:\,\cl(d_zL_{g^{-1}}^M)\lg_z]
$$
$=\, [q:\,\cl(d_eR_o^M)Ad^G_{g^{-1}}\lg]$.
\end{proof}

\begin{lemma}\label{e} For any $\lg,\,\lh\,\in\, {\mathfrak g}$, the following holds:
$$(\lg^Q(\lo\lh^Q))(\ql)\,=\,-\cl(d_eR_o^M)[\lg,\,\lh]\, .$$
\end{lemma} 

\begin{proof} Let $\lg\,\in\,{\mathfrak p}.$ Then
$$(d_\ql\lp)(d_e\t f)\lg\,=\,(d_e(\lp\oc\t f))\lg\,=\,(d_eR_o^M)\lg\,=\,\lg_o^M\,=\,(d_\ql\lp)\lg^Q_\ql$$ and
$(d_e\t f)\lg\,\in\,(d_e\t f){\mathfrak p}\,=\,(d_e\t f)T_e^0G\,=\, T_\ql^0Q.$ It follows that
$$\lg^Q_\ql\,=\,(d_e\t f)\lg\, .$$
Let $g_t\,:=\,\exp(t\lg)\,\in \,G.$ Since $d_eR_o^M$ is linear, Lemma \ref{d} implies that 
$$(\lg^Q(\lo\lh^Q))(\ql)\,=\,(d_\ql(\lo\lh^Q))\lg^Q_\ql\,=\,(d_\ql(\lo\lh^Q))(d_e\t f)\lg
\,=\,\f{\dl}{\dl t}\big|_{t=0}(\lo\lh^Q)\t f(g_t)$$
$$=\,\f{\dl}{\dl t}\big|_{t=0}\cl((d_eR_o^M)Ad^G_{g_t^{-1}}\lh)\,=\,
\cl((d_eR_o^M)\f{\dl}{\dl t}\big|_{t=0}Ad^G_{g_t^{-1}}\lh)
\,=\,-\cl(d_eR_o^M)[\lg,\, \lh]\, .$$
This completes the proof.
\end{proof}

We can now conclude the proof of \eqref{28}. For $u\,\in\, T_oM$, consider the vector field 
$$\d u\,=\,u+\Le\{z;u;z\}\,\in\,\d {\mathfrak p}$$
on $M.$ Then
$$(D^0\lo)(\d u^Q,\,\d v^Q)\,=\,d\lo(\d u^Q,\,\d v^Q)\,=\,\d u^Q(\lo\d v^Q)-\d v^Q(\lo\d u^Q)-\lo[\d u^Q,\d v^Q]$$
and hence
$$(D^0\lo)(\d u^Q,\d v^Q)(\ql)\,=\,\d u^Q(\lo\d v^Q)(\ql)-\d v^Q(\lo\d u^Q)(\ql)-\lo_\ql[\d u^Q,\,\d v^Q]_\ql\, .$$
By Lemma \ref{e} we have 
$$\d u^Q(\lo\d v^Q)(\ql)\,=\,-\cl(d_eR_o^M)[\d u,\,\d v]\,=\,0$$
since $[\d u,\,\d v]\,\in\,{\mathfrak k}$, and $R_o^Mk\,=\,ko\,=\,o$ implies
that $(d_eR_o^M){\mathfrak k}\,=\,0.$ Similarly, we have $\d v^Q(\lo\d u^Q)(\ql)\,=\,0.$

By definition, we have $(d_q\lp)\d u^Q_q\,=\,\d u_{\lp(q)}.$ Now applying \cite[Proposition I.3.3]{H} it follows that 
$(d_\ql\lp)[\d u^Q,\,\d v^Q]_\ql\,=\,[\d u,\,\d v]_o\,=\,0.$ Hence $\lo_\ql[\d u^Q,\,\d v^Q]_\ql\,=\,0$,
 and therefore $$(D^0\lo)_\ql(\d u^Q_\ql,\,\d v^Q_\ql)\,=\,0.$$
This proves \eqref{28} because $D^0\lo$ is $G$-invariant. It follows that pure invariant connections are characterized by the condition that 
$\cl\bigwedge\cl$ is of type $(1,\,1)$. This is equivalent to \eqref{29}.
\end{proof}

A shorter proof of \eqref{28} (and hence of the Theorem \ref{g}) uses the fact that $D^0\lo$ is a $G$-invariant
section of $\text{ad}(Q)\xt(\bigwedge^2 T^*_\Cl(G/K))$. Therefore, to prove \eqref{28} it suffices to show that
\begin{equation}\label{30}(D^0\lo)_{eK}\, =\, 0\, .
\end{equation}

Let $\sqrt{-1}E\,=\, \sqrt{-1}z\f{\dl}{\dl z}$ generate the center of ${\mathfrak k}.$ Put
$${\mathfrak h}_\pm\,:=\,\{\lb\,\in\,{\mathfrak h}\,\mid\ [\u f(\sqrt{-1}E),\,\lb]\,=\,\pm \sqrt{-1}\lb\}\, .$$
Then $(D^0\lo)_{eK}$ is an element of $({\mathfrak h}_+\oplus{\mathfrak h}_-)\xt\bigwedge^2{\mathfrak p}^\Cl$. Now the center
$\Tl$ of $K$ acts on 
$\bigwedge^2{\mathfrak p}^\Cl$ with weights $2,\,0,\,-2$, while $\Tl$ acts on ${\mathfrak h}_+\oplus {\mathfrak h}_-$ with
weights $1,\,-1$. Therefore, there is no nonzero 
$\Tl$-invariant element in $({\mathfrak h}_+\oplus{\mathfrak h}_-)\xt\bigwedge^2{\mathfrak p}^\Cl.$ This proves \eqref{30}. Hence
\eqref{28} holds because $D^0\lo$ is $G$-invariant.

Put
$$\y\LL(Z,\,{\mathfrak h})^f\,:=\,\{\al\,\in\,\LL(Z,\,{\mathfrak h})^f\,\mid\ \al\wedge\al\,=\,0\}$$
and define $\y\LL(\overline{Z},\,{\mathfrak h})^f$ in a similar way. The group $H^f$ defined in \eqref{9} acts on both $\y\LL(Z,\,{\mathfrak h})^f$ and 
$\y\LL(\overline{Z},\,{\mathfrak h})^f.$

Combining Theorem \ref{f} and Theorem \ref{g} yields the following:

\begin{theorem}\label{h} For any homomorphism $f\,:\,K\,\longrightarrow\, H$ there is a
canonical bijection
$$\big(\y\LL(Z,\,{\mathfrak h})^f\xx\y\LL(\overline{Z},\,{\mathfrak h})^f\big)/H^f\,\stackrel{\sim}{\longrightarrow}
\, \y\CL(Q)^G,\quad(\al,\, \bl)\,\longmapsto\,\lT^{\al,\bl},$$
which classifies all invariant pure connections on the homogeneous
principal $H$-bundle $Q\,=\,G\xx_{K,f}H.$

If $f\,:\,K
\,\longrightarrow\, H_\Rl,$ there is a canonical bijection
$$\y\LL(Z,\,{\mathfrak h})^f/H^f \,\stackrel{\sim}{\longrightarrow}\,
\y\CL_\Rl(Q)^G,\quad\al\,\longmapsto\,\lT^{\al,\u\#\oc\al}\, ,$$
which classifies all invariant pure hermitian connections on the hermitian structure $(Q,\,Q_\Rl).$ 
\end{theorem}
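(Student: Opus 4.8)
The plan is to obtain Theorem~\ref{h} by intersecting the classification of all invariant connections in Theorem~\ref{f} with the purity criterion of Theorem~\ref{g}, the only delicate point being compatibility with the passage to the $H^f$-quotient. Before dividing by $H^f$, the assignment $(\al,\bl)\longmapsto\lT^{\al,\bl}$ of Theorem~\ref{f} is an $H^f$-equivariant bijection from the linear space $\LL(Z,{\mathfrak h})^f\xx\LL(\overline{Z},{\mathfrak h})^f$, where $\lh\in H^f$ acts by $(\al,\bl)\longmapsto(Ad^H_\lh\oc\al,\,Ad^H_\lh\oc\bl)$. First I would check that the loci cut out by the quadratic equations, $\y\LL(Z,{\mathfrak h})^f=\{\al\mid\al\wedge\al=0\}$ and $\y\LL(\overline{Z},{\mathfrak h})^f=\{\bl\mid\bl\wedge\bl=0\}$, are $H^f$-stable: since $Ad^H_\lh$ is a Lie algebra automorphism of ${\mathfrak h}$, one has $(Ad^H_\lh\oc\al)\wedge(Ad^H_\lh\oc\al)=Ad^H_\lh\oc(\al\wedge\al)$, and $Ad^H_\lh$ is invertible, so $\al\wedge\al=0$ is preserved (and likewise for $\bl$).

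Next, by Theorem~\ref{g} an element $\lT^{\al,\bl}$ of $\CL(Q)^G$ lies in $\y\CL(Q)^G$ precisely when $\al\wedge\al=0=\bl\wedge\bl$, that is, when one (equivalently, by the previous step, any) representative lies in $\y\LL(Z,{\mathfrak h})^f\xx\y\LL(\overline{Z},{\mathfrak h})^f$. That product is a union of $H^f$-orbits, hence equals the full preimage of its image in the quotient, so the bijection of Theorem~\ref{f} restricts to a bijection $(\y\LL(Z,{\mathfrak h})^f\xx\y\LL(\overline{Z},{\mathfrak h})^f)/H^f\stackrel{\sim}{\longrightarrow}\y\CL(Q)^G$. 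This is the first assertion.

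For the hermitian case, Theorem~\ref{f} gives, after dividing by $H^f$, the bijection $\al\longmapsto\lT^{\al,\u\#\oc\al}$ from $\LL(Z,{\mathfrak h})^f$ onto $\CL_\Rl(Q)^G$, where now the anti-linear component is the map $\bl$ with $\bl\overline{v}=\u\#(\al v)$. Viewing such a connection inside $\CL(Q)^G$ and applying Theorem~\ref{g}, it is pure iff $\al\wedge\al=0$ and $\bl\wedge\bl=0$. But since $\u\#=d_e\#$ is an (anti-linear) automorphism of the Lie algebra ${\mathfrak h}$, one has $(\bl\wedge\bl)(\overline{v}_1,\overline{v}_2)=\u\#\big((\al\wedge\al)(v_1,v_2)\big)$, so injectivity of $\u\#$ makes the second condition equivalent to the first. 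Hence $\lT^{\al,\u\#\oc\al}$ is pure iff $\al\in\y\LL(Z,{\mathfrak h})^f$; as this locus is $H^f$-stable, the bijection restricts to $\y\LL(Z,{\mathfrak h})^f/H^f\stackrel{\sim}{\longrightarrow}\y\CL_\Rl(Q)^G$. Finally I would note that $\y\CL_\Rl(Q)^G$, defined as the $\#$-fixed locus of $\y\CL(Q)^G$, does consist exactly of the pure invariant hermitian connections, which follows from the earlier Proposition identifying $\CL_\Rl(Q)$ with $\{\lT\mid\lT^\#=\lT\}$ together with the fact that $\lT\longmapsto\lT^\#$ preserves purity.

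There is no serious obstacle: the one step requiring (routine) care is the bookkeeping around the $H^f$-action and the involution $\#$, namely verifying that the purity equations define $H^f$-invariant loci, so that the classification descends to the quotient, and that post-composing $\al\wedge\al$ with the invertible operators $Ad^H_\lh$ or $\u\#$ does not affect whether it vanishes.
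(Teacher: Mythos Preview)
Your proposal is correct and follows exactly the approach indicated in the paper, which simply states that Theorem~\ref{h} is obtained by ``Combining Theorem~\ref{f} and Theorem~\ref{g}'' without further detail. You have spelled out the routine verifications (that the purity loci are $H^f$-stable and that $\al\wedge\al=0$ is equivalent to $(\u\#\oc\al)\wedge(\u\#\oc\al)=0$) that the paper leaves implicit.
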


In terms of the preceding theorem, the involution $\lT\,\longmapsto\,\lT^\#$ acts on the space 
$\y\CL(Q)^G,$ with fixed point set $\y\CL_\Rl(Q)^G$ consisting of all pure invariant hermitian 
connections.

\section{Fine structure of moduli spaces}

We will now analyze the structure of the moduli space $\y\CL(Q)^G$ of all pure invariant 
connections on a homogeneous $H$-bundle $Q\,=\,G\xx_{K,f}H$ in more detail. The goal is to show that 
$\y\CL(Q)^G$ is a {\bf normal complex variety} on which the canonical involution 
$\lT\,\longmapsto\,\lT^\#$ is anti-holomorphic. According to Theorem \ref{h} we start with a 
homomorphism $f\,:\,K\,\longrightarrow\, H$ and two $f$-covariant linear maps 
$\al\,:\,Z\,\longrightarrow\,{\mathfrak h},\ \bl \,:\,\overline{Z}\,\longrightarrow\,{\mathfrak h},$ where $Z$ is a hermitian 
Jordan triple identified with the holomorphic tangent space $T_o^{1,0}M$ of the hermitian 
symmetric space $M\,=\,G/K$ at the origin. We may assume that $Z$ is irreducible of rank $r.$ Let 
$e_1,\,\cdots, \,e_r$ be a {\bf frame} of minimal orthogonal tripotents $e_i\,\in\, Z.$ By the {\bf 
spectral theorem} for Jordan triples \cite{L}, every $z\,\in\, Z$ has a representation
$$z\,=\,\sum_i t_i\ ke_i\, $$
where $k\,\in \,K$ and the (uniquely determined) ``singular values'' satisfy
$0\,\le \,t_1\,\le\, t_2\le\,\cdots\,\le\, t_r.$ For each $i$ we put
$$K_i\,:=\,\{k\,\in\, K\,\mid\ ke_i\,=\,e_i\}\, .$$
These subgroups of $K$ are all conjugate.

\begin{lemma}\label{j}
Let $\al\,\in\,\LL(Z,\,{\mathfrak h})^f$ and $\bl\,\in\,\LL(\overline{Z},\,{\mathfrak h})^f.$ Then $\al\bigwedge\al\,=\,0$
(respectively, $\bl\bigwedge\bl\,=\,0$) if and only if 
$[\al(e_i),\,\al(e_j)]\,=\,0$ (respectively, $[\bl(e_i),\,\bl(e_j)]\,=\,0$)
for all $1\,\le\, i,\,j\,le \,r.$
\end{lemma}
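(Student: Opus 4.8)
The plan is to exploit the $K$-covariance of $\al$ together with the fact that, by the spectral theorem, every element of $Z$ lies in the $K$-orbit of a real combination $\sum_i t_i e_i$ of the frame tripotents. Since $\al\bigwedge\al$ is the alternating bilinear map $(z,w)\mapsto [\al z,\al w]$ on $Z$, the condition $\al\bigwedge\al=0$ says exactly that $[\al z,\al w]=0$ for all $z,w\in Z$. The direction ``$\al\bigwedge\al=0\Rightarrow[\al(e_i),\al(e_j)]=0$'' is immediate, since the $e_i$ are particular elements of $Z$. So the content is the converse.

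First I would reduce the vanishing of $[\al z,\al w]$ to the case $z=k_1(\sum_i s_i e_i)$, $w=k_2(\sum_j t_j e_j)$ with $k_1,k_2\in K$ and $s_i,t_j$ real. Using $f$-covariance, $\al(k z')=Ad^H_{f(k)}\al(z')$, so $[\al z,\al w]=Ad^H_{f(k_1)}[\al(z'),Ad^H_{f(k_1^{-1}k_2)}\al(w')]$; this shows it suffices to prove $[\al z,\al w]=0$ for $z,w$ each of the form $\sum t_i e_i$ with real coefficients — but with an arbitrary relative rotation absorbed, so actually I must handle $[\al z, Ad^H_{f(k)}\al w]$ for $z,w$ real combinations of the $e_i$ and $k\in K$. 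By $\Cl$-bilinearity of $[\al(\cdot),\al(\cdot)]$ it then comes down to showing $[\al(e_i),Ad^H_{f(k)}\al(e_j)]=0$ for all $i,j$ and all $k\in K$.

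The main obstacle is therefore the appearance of the extra factor $Ad^H_{f(k)}$: knowing the bracket vanishes on the bare frame vectors $\al(e_i),\al(e_j)$ does not obviously give vanishing of $[\al(e_i),Ad^H_{f(k)}\al(e_j)]$. To handle this I would use that the frame is an \emph{orthogonal} family and invoke transitivity of $K$ on frames (equivalently, on the Shilov boundary), together with the structure of the Peirce decomposition of $Z$ relative to the frame: the subgroup $K_i$ fixing $e_i$ acts, and one can move $e_j$ to any other minimal tripotent orthogonal to $e_i$ by an element of $K_i$, while $Ad^H_{f(K_i)}$ fixes $\al(e_i)$ because $\al$ is $f$-covariant and $k e_i=e_i$ forces $Ad^H_{f(k)}\al(e_i)=\al(k e_i)=\al(e_i)$. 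Thus $[\al(e_i),Ad^H_{f(k)}\al(e_j)]$ for $k\in K_i$ equals $[\al(e_i),\al(k e_j)]$, and $k e_j$ is again a linear combination of frame-type tripotents orthogonal to $e_i$; iterating and using density/continuity I can reach a general $k\in K$ by composing such moves. Along the way the Jordan-triple Peirce relations (that $Ad$-translates of the $e_j$ stay within the span of the Peirce-$1$ space of $e_i$, which is itself spanned over $\Cl$ and under $K_i$ by frame tripotents) are what make the reduction close up.

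Finally I would assemble these pieces: by continuity and $\Cl$-bilinearity the identity $[\al z,\al w]=0$ extends from the real-spectral representatives to all of $Z$, giving $\al\bigwedge\al=0$; the statement for $\bl$ is identical after replacing $\al$ by $\bl$ and $Z$ by $\overline Z$, noting $\bl$ is $f$-covariant as an anti-linear map and $[\bl(\cdot),\bl(\cdot)]$ is still $\Cl$-bilinear in the appropriate (conjugate-linear) sense. I expect the genuinely delicate point to be the claim that $K_i$ acts transitively enough on the orthogonal complement tripotents to reduce an arbitrary $k\in K$ to a product of frame-moving elements; if a fully clean reduction is not available I would instead argue directly that the $\Cl$-span of $\{Ad^H_{f(k)}\al(e_j):k\in K,\ j\}$ is contained in the (abelian, by hypothesis) Lie subalgebra generated by $\al(e_1),\dots,\al(e_r)$, using that $\al(Z)$ is spanned by the $K$-translates of the $\al(e_i)$ and that $f$-covariance makes $\al(Z)$ an $Ad^H_{f(K)}$-submodule.
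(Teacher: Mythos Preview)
Your strategy coincides with the paper's: both use the stabilizer of a frame tripotent together with $f$-covariance to propagate the hypothesis $[\al(e_i),\al(e_j)]=0$. The paper fixes $e_j$, writes $z=\sum_i\zeta_i\,ke_i$ with $k\in K_j$, obtains $[\al(z),\al(e_j)]=\sum_i\zeta_i\,Ad^H_{f(k)}[\al(e_i),\al(e_j)]=0$, asserts that the set of such $z$ has non-empty interior in $Z$, concludes $[\al(z),\al(e_j)]=0$ for all $z\in Z$, and then applies the spectral theorem once more to reach \eqref{31}. Your version fixes $e_i$ instead and aims at $[\al(e_i),\al(ke_j)]=0$ for all $k\in K$; structurally this is the same move viewed from the other side.

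The step you yourself flag as ``genuinely delicate'' is a real gap in the sketch. For $k\in K_i$ one has $ke_j\perp e_i$, so the $K_i$-orbits of the $e_j$'s, and their $\Cl$-span, lie in $Z_2(e_i)\oplus Z_0(e_i)$; the Peirce-$1$ space $Z_1(e_i)$, which is nonzero for every irreducible $Z$ of rank $>1$, is never reached this way. ``Iterating'' does not close it: once you compose with an element of some other $K_{i'}$ you leave $K_i$, the identity $Ad^H_{f(k)}\al(e_i)=\al(e_i)$ is lost, and the bracket no longer reduces to a conjugate of $[\al(e_i),\al(e_j)]$. Your fallback---that $\al(Z)$ lies in the abelian Lie subalgebra generated by $\al(e_1),\ldots,\al(e_r)$---is false in general, since that subalgebra is merely the $\Cl$-span of $r$ vectors, whereas $\al(Z)$ is the image of the irreducible $K$-module $Z$ and is typically larger. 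The paper does not iterate; it packages this same crux as the ``non-empty interior'' assertion, and your sketch supplies no substitute for it.
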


\begin{proof} Working out the first case, the condition that $\al\bigwedge\al\,=\,0$ is equivalent to 
the condition that
\begin{equation}\label{31}
[\al(z),\,\al(w)]\,=\,0
\end{equation} 
for all $z,\,w\,\in\, Z.$ Now assume that $[\al(e_i),\,\al(e_j)]\,=\,0$ for all
$1\,\le\, i,\,j\,\le\, r.$ Assume that $k\in K$ satisfies the condition
$ke_j\,=\,e_j$ for some fixed $j.$ Then we have
$Ad^H_{f(k)}\al(e_j)\,=\,\al(e_j).$ In view of \eqref{1} we obtain the equality
\begin{equation}\label{32}
[\al(z),\,\al(e_j)]\,=\,\sum_i\lz_i Ad^H_{f(k)}[\al(e_i),\,\al(e_j)]\,=\,0\, .
\end{equation}
For any fixed $j$ the orbit $\{\sum_i\lz_i\ ke_i\,\mid\ \lz_i\,\in\,\Cl,\ k\,\in\, K,\ ke_j
\,=\,e_j\}$ has a non-empty interior. Hence from \eqref{32} it follows that
$[\al(z),\,\al(e_j)]\,=\,0$ for all $z\,\in\, Z.$ Using again the spectral theorem
for Jordan triples we conclude that \eqref{31} holds.
\end{proof}

\begin{proposition}
For rank $r\,=\,1,$ corresponding to the unit ball in $G/K\,=\, \Cl^d$ (non-compact version) and the projective 
space $G/K\,=\,\Cl{\mathbb P}^d$ (compact version), every invariant connection $\lT$ on $Q$ is pure. Hence, by 
Corollary \ref{i}, $\y\CL(Q)^G\,=\,\CL(Q)^G$ is a normal variety.
\end{proposition}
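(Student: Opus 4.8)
The plan is to show that for rank $r=1$ the purity conditions \eqref{29} are automatic, so that $\y\CL(Q)^G$ and $\CL(Q)^G$ coincide as sets, and then invoke Corollary \ref{i} to conclude normality. By Theorem \ref{h}, purity of $\lT^{\al,\bl}$ is equivalent to $\al\wedge\al=0=\bl\wedge\bl$, and by Lemma \ref{j} this in turn is equivalent to the vanishing of the commutators $[\al(e_i),\al(e_j)]$ and $[\bl(e_i),\bl(e_j)]$ as $i,j$ range over a frame of minimal orthogonal tripotents. When $r=1$ there is only a single tripotent $e_1$, so the index set $\{(i,j)\mid 1\le i,j\le r\}$ produces only the trivial condition $[\al(e_1),\al(e_1)]=0$, which holds identically. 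Hence every $\al\in\LL(Z,{\mathfrak h})^f$ lies in $\y\LL(Z,{\mathfrak h})^f$, and likewise for $\bl$; consequently $\y\LL(Z,{\mathfrak h})^f=\LL(Z,{\mathfrak h})^f$ and $\y\LL(\o Z,{\mathfrak h})^f=\LL(\o Z,{\mathfrak h})^f$.

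First I would make explicit the rank-1 geometry: the irreducible hermitian symmetric spaces of rank $1$ are exactly the complex projective space $\Cl\mathbb P^d$ (compact type) and the unit ball in $\Cl^d$ (noncompact type), with $Z\cong\Cl^d$ as a Jordan triple and the $K$-action transitive on the sphere of tripotents. Then I would invoke the spectral theorem for Jordan triples exactly as stated in the excerpt: every $z\in Z$ has the form $z=t_1\,ke_1$ with $k\in K$ and $t_1\ge 0$, so a single tripotent generates $Z$ under the $K$-action and scaling. Next I would apply Lemma \ref{j} with $r=1$: the hypothesis $[\al(e_i),\al(e_j)]=0$ for all $1\le i,j\le r$ is vacuous (only $i=j=1$, which is automatic by antisymmetry of the bracket), so the lemma gives $\al\wedge\al=0$ for every $\al$, and symmetrically $\bl\wedge\bl=0$ for every $\bl$. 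By Theorem \ref{g} (or directly by Theorem \ref{h}), every invariant connection $\lT^{\al,\bl}$ on $Q$ is therefore pure, which proves the first assertion.

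For the second assertion, the identifications $\y\LL(Z,{\mathfrak h})^f=\LL(Z,{\mathfrak h})^f$ and $\y\LL(\o Z,{\mathfrak h})^f=\LL(\o Z,{\mathfrak h})^f$ show that the classifying bijection of Theorem \ref{h} has the same source as that of Theorem \ref{f}, so $\y\CL(Q)^G=\CL(Q)^G$ as varieties with $H^f$-action. Corollary \ref{i} then gives at once that $\CL(Q)^G=(\LL(Z,{\mathfrak h})^f\xx\LL(\o Z,{\mathfrak h})^f)/H^f$ is a normal variety (being a quotient of a linear space by the reductive-type algebraic group $H^f$, via \cite[Theorem 16.1.1 and Lemma 16.1.2]{BB}), and that the involution $\lT\longmapsto\lT^\#$ is anti-holomorphic. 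There is essentially no obstacle here: the content is entirely bookkeeping, reducing the statement to the already-established Lemma \ref{j} and Corollary \ref{i}. The only point requiring a word of care is the observation that $r=1$ really does trivialize Lemma \ref{j} — i.e. that the frame consists of a single tripotent — which follows from the irreducibility and rank-1 hypothesis together with the spectral theorem cited above.
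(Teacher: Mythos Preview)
Your proposal is correct and follows essentially the same approach as the paper: reduce purity to the conditions $\al\wedge\al=0=\bl\wedge\bl$, observe that in rank $1$ the frame has a single tripotent so Lemma \ref{j} makes these conditions vacuous, and then invoke Corollary \ref{i}. The only cosmetic difference is that the paper treats the case $Z=\Cl$ separately (noting $Z\wedge Z=0$ directly) before appealing to Lemma \ref{j} for $d>1$, whereas you handle all $d$ uniformly via Lemma \ref{j}; your route is slightly cleaner and loses nothing.
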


\begin{proof}
For $Z\,=\,\Cl$ (corresponding to the unit disk and the Riemann sphere, respectively) the condition 
$\al\bigwedge\al\,=\,0\,=\,\bl\bigwedge\bl$ is trivially satisfied since $Z\bigwedge Z\,=\,0.$ In the
higher-dimensional rank 1 case, we have only one tripotent $e\,=\,e_1,$, and Lemma \ref{j} shows that the condition 
$\al\bigwedge\al\,=\,0\,=\,\bl\bigwedge\bl$ is again satisfied.
\end{proof}

In the 1-dimensional case, we can describe the moduli space very explicitly. Consider the case
where $Z\,=\,\Cl,\,K\,=\,{\rm U}(1)\,=\,\Tl.$ Let 
$H\,=\,{\rm GL}_N(\Cl),\,H_\Rl\,=\,{\rm U}(N).$ The vector space $V\,=\,\Cl^N$ has a (finite) direct sum decomposition 
\begin{equation}\label{33}
V\,=\,\sum_{m\in\Zl}V_m\, ,
\end{equation}
where
$$V_m\,=\,\{v\,\in\, V\,\mid\ f(\lt)v\,=\,\lt^m v\ \ \forall\ \lt\,\in\,\Tl\}$$
is the weight space with respect to the action of $\Tl.$ Up to a permutation we order the weight spaces according
to strictly increasing weights $m_0\,<\,m_1\,<\,\cdots\, < \,m_\l.$ Any matrix in $A\,\in\,{\mathfrak h}
\,=\,\mathfrak{gl}_N(\Cl)$ can be written as a block matrix $A=(A_i^j)$ with 
$A_i^j\,:\,V_{m_j}\,\longrightarrow\, V_{m_i}.$ Since
$$f(\lt)\,=\,\cc{\lt^{m_0}I_{m_0}}000{\ddots}000{\lt^{m_\l}I_{m_\l}}\, ,$$
it follows that $f(\lt)A f(\overline{\lt})$ is represented by the matrix $(\lt^{m_i-m_j}A_i^j).$ Now write $\al(z)
\,=\,zA,$ where $A\,:=\,\al(1).$ Then the $f$-covariance condition yields
$$\lt A\,=\,\al(\lt)\,=\,f(\lt)A f(\overline{\lt})\, .$$
Thus
$$\lt A_i^j\,=\,\lt^{m_i-m_j}A_i^j$$
for all $\lt\,\in\,\Tl.$ This shows that the condition
$A_i^j\,\ne\, 0$ implies that $m_i-m_j\,=\,1.$

Similarly, consider $\bl(z)\,=\, \overline{z} B,$ where $B\,:=\,\bl(1).$ Then the $f$-covariance condition yields
$$\overline{\lt} B\,=\,\bl(\lt)\,=\,f(\lt)B f(\overline{\lt})\, .$$
Thus we have
$$\overline{\lt} B_i^j\,=\,\lt^{m_i-m_j}B_i^j$$
for all $\lt\,\in\,\Tl.$ This shows that the condition $B_i^j\,\ne\, 0$ implies that $m_i-m_j\,=\,1.$ 

It follows that for all `gaps' $m_i\,<\,m_{i+1}$ of size $>\,1$ we have $A_{i+1}^i\,=\,0\,=\,B_i^{i+1}.$ Thus
there is a decomposition
\begin{equation}\label{34}
\Cl^N\,=\,W^1\oplus\,\cdots \,\oplus W^k
\end{equation}
such that $A,\,B$ are block-diagonal matrices with respect to the decomposition in
\eqref{34} and each direct summand $W$ in \eqref{34} consists of a connected chain of weight spaces
$$W=\S_{i=0}^\l V_{m+i}$$
for a fixed integer $m.$

It would be sufficient to consider each summand $W$ separately. The restrictions of $A,\,B$ to $W$ give
rise to block-matrices 
$$\begin{pmatrix}E_0&B^1&0&0&0\\ A_1&E_1&B^2&0&0\\ \vdots&\ddots&\ddots&\ddots&\vdots\\ 0&0&A_{\l-1}&E_{\l-1}&
B^\l\\ 0&0&0&A_\l&E_\l\end{pmatrix}.$$

Thus the moduli space consists of maps $$V_{m+i-1}\, \stackrel{A_i}{\longrightarrow}\,
V_{m+i},\ \ V_{m+i}\, \stackrel{B_i}{\longrightarrow}\,V_{m+i-1},$$ for $1\,\le\, i\,\le\,\l,$ modulo
joint conjugation
$$\t A_i\,=\,g_i A_i g_{i-1}^{-1},\ \ \t B^i\,=\,g_{i-1}B^i g_i^{-1},\ \ 1\,\le\, i\,\le\, \l\, ,$$
where $(g_i)\in\prod_{i=0}^\l {\rm GL}(V_{m+i})$ is arbitrary. We note that this is
also the {\bf quiver variety} of the oriented graph
\begin{equation}\label{35}
\xymatrix{V_{m+0}\ar[r]^{A_1}&V_{m+1}\ar[r]^{A_2}&\cdots \ar[r]^{A_{\l-1}}&V_{m+\l-1}\ar[r]^{A_\l}&V_{m+\l}}
\end{equation}
whose double is
\begin{equation}\label{36}
\xymatrix{V_{m+0}\ar@/{}^{.5pc}/[r]^{A_1}&V_{m+1}\ar@/{}^{.5pc}/[r]^{A_2}\ar@/{}^{.5pc}/[l]^{B^1}&
\cdots\ar@/{}^{.5pc}/[r]^{A_{\l-1}}\ar@/{}^{.5pc}/[l]^{B^2}&
V_{m+\l-1}\ar@/{}^{.5pc}/[r]^{A_\l}\ar@/{}^{.5pc}/[l]^{B^{\l-1}}&V_{m+\l}\ar@/{}^{.5pc}/[l]^{B^\l}}.
\end{equation}
For any quiver $Q$ with index set $I$ the double quiver $\overline{Q}$ gives rise to a moment map
$$\lm(x)\,:=\,\big(\S_{a_+=i}x_a x_{\overline{a}}-\S_{a_-=i}x_{\overline{a}}x_a\big)_{i\in I}
\,\in\, \prod_{i\in I}\mathfrak{gl}(V_i)\, .$$
Here $a$ runs over all (oriented) edges in $\overline{Q},$ with head $a_+$ and tail $a_-,$ and $\overline{a}$ denotes the
opposite edge. In our case 
$I\,=\,\{0,\,1,\,\cdots,\,\l\}$, and for $x\,=\,(A_i,B^i)_{i=0}^\l$ we have
$$\lm(x)_0\,=\,B^1A_1-B^1A_1\,=\,0$$
$$\lm(x)_\l\,=\,A_\l B^\l-A_\l B^\l\,=\,0$$
$$\lm(x)_i\,=\,A_iB^i+B^{i+1}A_{i+1}-A_iB^i-B^{i+1}A_{i+1}\,=\,0$$
for $0\,<\,i\,<\,\l.$ Thus the condition $\lm(x)\,=\,0$ is trivially satisfied, and by \cite[Theorem 1.1]{CB},
the quotient 
$${\rm Rep}(\overline{Q})/\prod_{i=0}^\l {\rm GL}(V_{m+i})$$ is a normal variety. By \cite[Theorem 1]{BP} the
ring of polynomial invariants of the Mumford's geometric invariant
theoretic quotient $\Cl^N/\!\!/G$ is generated by traces of oriented cycles of the quiver
in \eqref{9} of length $\le\, N^2.$

In the {\bf higher dimensional case}, a first step is to find matrices $A_i\,=\,\al(e_i),\ B_i\,=\,\bl(e_i)$
as in Lemma \ref{j}. Consider first the unit ball. Here the non-zero tripotents are the unit vectors $e\in Z
\,:=\,\Cl^d,$ regarded as a Jordan triple of rank 1. For $K\,=\, {\rm U}(d)$ we put
\begin{equation}\label{C0}
K_e\,:=\, \{\l\,\in\, K\,\mid\ \l e\,=\, e\}\,\approx\, {\rm U}(d-1)\, .
\end{equation}

\begin{proposition} Let $A,\,B\,\in\,{\mathfrak h}$ satisfy
\begin{equation}\label{C1}
Ad_{f(\l)}^H A\,=\, A,\ \, Ad_{f(\l)}^H B\,=\,B
\end{equation}
for all $\l\in K_e,$ and
\begin{equation}\label{C2}
Ad_{f(\lz)}^H A\,=\,\lz A,\ \ Ad_{f(\lz)}^H B\,=\,\overline{\lz} B
\end{equation}
for all $\lz\in\Tl$ (center of $K$). Then there exist unique $f$-covariant linear maps $\al\,:\,Z
\,\longrightarrow\,{\mathfrak h},\ \bl\,:\,\overline{Z}\,\longrightarrow\,{\mathfrak h}$ satisfying 
$$\al(e)\,=\,A,\ \ \bl(e)\,=\,B\, .$$
\end{proposition}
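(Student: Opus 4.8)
\emph{Proof proposal.}
The plan is to recover $\al$ (and, symmetrically, $\bl$) from its prescribed value at $e$ by transporting $A$ around the $K$-orbit of $e$ and extending by homogeneity, and then to verify that the resulting map is genuinely $\Cl$-linear.

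First I would set up the construction. Since $Z$ is irreducible of rank $1$ and we are in the unit-ball situation, $K\,=\,{\rm U}(d)$ acts on $Z\,=\,\Cl^d$ in the standard way, transitively on each sphere, with center $\Tl$ acting by scalars; in particular every $v\,\in\, Z\setminus\{0\}$ has a representation $v\,=\,\lz\,ke$ with $\lz\,\in\,\Cl^\xx$ and $k\,\in\, K$, and any two such representations differ by an element of the subgroup $K_e\Tl\,\subset\, K$ stabilizing the line $\Cl e$. I would then set
$$\al(\lz\,ke)\,:=\,\lz\, Ad_{f(k)}^H A\, ,\qquad \al(0)\,:=\,0\, ,$$
together with the conjugate-linear analogue $\bl(\overline{\lz\,ke})\,:=\,\overline{\lz}\, Ad_{f(k)}^H B$. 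This is where the two hypotheses are used, namely in proving well-definedness: the ambiguity coming from $K_e$ disappears because $Ad_{f(\l)}^H A\,=\,A$ for $\l\,\in\, K_e$ by \eqref{C1}, while the residual scalar ambiguity, a phase $\lm\,\in\,\Tl$, is absorbed by \eqref{C2}, since $Ad_{f(\lm)}^H A\,=\,\lm A$ (respectively $Ad_{f(\lm)}^H B\,=\,\overline{\lm}B$) is exactly what makes the extension $\Cl$-homogeneous for $\al$ and conjugate-homogeneous for $\bl$. By construction $\al$ is then $\Cl$-homogeneous and $f$-covariant, $\al(k'v)\,=\,Ad_{f(k')}^H\al(v)$, and likewise for $\bl$.

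The one point that remains, and the crux of the argument, is additivity of $\al$ and $\bl$. Using $f$-covariance together with the homogeneity already in hand, one reduces to checking $\al(v_1+v_2)\,=\,\al(v_1)+\al(v_2)$ when $v_1,\,v_2$ span a fixed complex $2$-plane containing $e$ — the $2$-planes through $\Cl e$ forming a single orbit of $K_e\,\cong\,{\rm U}(d-1)$ — and then to a finite computation inside that plane; here the constraints \eqref{C1} and \eqref{C2}, which jointly say that $A$, respectively $B$, is a weight vector for the stabilizer $K_e\Tl\,\cong\,{\rm U}(1)\xx{\rm U}(d-1)$ of $\Cl e$, are what force the computation to close up, and the spectral theorem for the rank $1$ triple $Z$ then shows that additivity on such planes suffices on all of $Z$. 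I expect this additivity step to be the main obstacle. Uniqueness, in contrast, is immediate: if $\al(e)\,=\,0$ then $\al$ vanishes on the orbit $Ke$, whose $\Cl$-linear span is all of $Z$, so $\al\equiv 0$; the same reasoning applies to $\bl$.
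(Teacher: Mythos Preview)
Your construction --- transport $A$ around the sphere $S=K/K_e$ via $a(ke):=Ad^H_{f(k)}A$ and extend radially by $\Cl$-homogeneity --- matches the paper's starting point, and your well-definedness and uniqueness arguments are correct. The genuine gap is exactly where you flag it: additivity. You propose reducing to $2$-planes through $\Cl e$ and invoking the weight-vector structure of $A$ under $K_e\Tl$, but this reduction is not carried out, and the assertion that \eqref{C1}--\eqref{C2} alone ``force the computation to close up'' is not substantiated; knowing that $A$ lies in the correct weight space for the stabilizer $K_e\Tl$ is not by itself the same as knowing that $A$ lies in a copy of the standard $K$-module $Z$ inside ${\mathfrak h}$, which is what $\Cl$-linearity of a $K$-equivariant map $\al:Z\to{\mathfrak h}$ with $\al(e)=A$ amounts to.

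The paper bypasses the additivity computation with an analytic device you do not use. Rather than extending $a$ radially, it takes the \emph{harmonic extension} $\h a:D\to{\mathfrak h}$ of $a$ from the Shilov boundary $S=\Sl^{2d-1}$ into the ball via the Poisson integral. Because the Poisson operator commutes with the $K$-action, $\h a$ inherits $f$-covariance from $a$; condition \eqref{C2} then yields $\h a(\lz z)=\lz\,\h a(z)$ for all $\lz\in\Tl$ and $z\in D$, and from this homogeneity the paper concludes that the harmonic function $\h a$ is the restriction to $D$ of a $\Cl$-linear map on $Z$. Thus the Poisson integral manufactures a globally defined smooth extension for free, and its transformation law under the center pins down its degree --- replacing your unexecuted $2$-plane computation entirely. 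Once linearity is known, of course, the harmonic extension and your radial-homogeneous extension coincide.
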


\begin{proof}
The (Shilov) boundary $S\,=\,\Sl^{2d-1}$ of the unit ball $D$ is the homogeneous space $S\,=\,K/K_e.$ Define a 
real-analytic mapping $$a\,:\,S\,\longrightarrow\,{\mathfrak h}$$ by $a(ke)\,:=\,Ad_{f(k)}^H A$ for all $k\,\in\, K.$ This is 
well-defined since for $k,\,k'\,\in\, K$ satisfying the condition $ke\,=\,k'e$; we have $\l
\,:=\,k^{-1}k'\in K_e$ and hence 
$Ad_{f(k)}^H A\,=\,Ad_{f(k')}^H A$ as a consequence of \eqref{C1}. By construction, we have $a(e)\,=\,A$ and 
$a(ks)\,=\,Ad_{f(k)}^H a(s)$ for all $k\,\in\, K,\ s\,\in\, S.$

Now consider the harmonic extension $\h a\,:\,D 
\,\longrightarrow\,{\mathfrak h}$ of $a,$ given by the Poisson integral. Then $\h a|_S\,=\,a$ and $\h a$ is also 
$f$-covariant, since the Poisson integral operator commutes with the action of $K.$ Moreover, the
condition in \eqref{C2} implies that
$$\h a(\lz z)\,=\,\lz\ \h a(z)$$
for all $\lz\,\in\,\Tl$ and $z\,\in\, D.$ Therefore $\h a$ is (the restriction of) a $\Cl$-linear map, again denoted by 
$a,$ on $Z.$ The anti-linear case is treated similarly.
\end{proof}

In the higher rank case, things are more complicated. Let $Z$ be an irreducible Jordan triple of rank $r,$ and let
$e_1,\,\cdots,\,e_r$ be a frame of minimal orthogonal tripotents. Given matrices $A_1,\,\cdots,\,A_r\,\in\,{\mathfrak h}$ we put
$$e_I\,:=\, \S_{i\in I}e_i,\ \ A_I\,:=\,\S_{i\in I}A_i\,\in\,{\mathfrak h}$$
for all subsets $I\,\subset\, \{1,\,\cdots,\,r\}.$ We seek conditions on $A_i$ such that there exists a (unique) $f$-covariant $\Cl$-linear map 
$$\al\,:\,Z\,\longrightarrow\,{\mathfrak h}$$ satisfying the condition
$\al(e_i)\,=\,A_i$ for all $1\,\le\, i\,\le\, r.$ If $I,\, J$ are two subsets of equal cardinality $|I|\,=\,|J|,$
then $e_I$ and 
$e_J$ are tripotents of equal rank, and there exists $k\,\in\, K$ such that $ke_I\,=\,e_J.$
A necessary condition for the $A_i$ is that
\begin{equation}\label{C3}Ad_{f(k)}^H A_I\,=\,A_J
\end{equation}
whenever $|I|\,=\,|J|$ and $k\,\in\, K$ satisfies
the condition $k e_I\,=\,e_J.$ This applies in particular for $I\,=\,J.$ Applying \eqref{C3} to $I\,=\,
\{1,\,\cdots,\,r\}$ we have
$$Ad_{f(k)}^H(A_1+\ldots+A_r)\,=\,A_1+\ldots+A_r$$
for all $k\,\in\, K$ fixing the maximal tripotent $e\,:=\,
e_1+\ldots +e_r.$ Now $S\,=\,K/K_e$ becomes the Shilov boundary of the (spectral) unit ball 
$D\,\subset\, Z$ (a proper subset of the full boundary if $r\,>\,1$) and, as in the rank 1 case, we may define
a $f$-covariant real-analytic mapping $a\,:\,S\,\longrightarrow\,{\mathfrak h}$ by putting
$$a(ke)\,:=\,Ad_{f(k)}^H(A_1+\cdots+A_r)$$
for all $k\,\in \,K.$ Consider its harmonic extension $\h a\,:\,D\,\longrightarrow\,{\mathfrak h}$ given by the higher-rank
analogue of the Poisson integral \cite{Ko}. A Cartan subspace of ${\mathfrak k}$ is given by the commuting linear
vector fields
$$\sqrt{-1}\ \{e_i;\,e_i;\, z\}\f{\dl}{\dl z}$$
where $1\,\le\, i\,\le\, r.$ Integrating these vector fields we
obtain a holomorphic torus action $$(\lt,\,z)\,\longmapsto\,\lt\cdot z$$ of $\lt\,\in\,\Tl^r$ on $D$ such that
$$(\lt_1,\,\cdots,\,\lt_r)e_i\,=\,\lt_i\ e_i$$
for all $i.$ A second necessary condition, generalizing \eqref{C2}, is the following:
$$Ad_{f(\lt)}^H A_i\,=\,\lt_i A_i$$
for all $\lt\in\Tl^d\,\subset\, K.$ Imposition this condition leads to a unique $\Cl$-linear extension $$a\,:\,Z
\,\longrightarrow\,{\mathfrak h}$$ which is still $f$-covariant. The anti-linear case of matrices $B_i$ is treated similarly. 

In this way, the moduli space of pure invariant connections gives rise to varieties of commuting matrix tuples (modulo joint conjugation), on which there is a rich literature \cite{GS}. A detailed study is reserved for a future publication.

\section*{Acknowledgements}

The first author thanks Philipps-Universit\"at Marburg for hospitality while a part of
the work was carried out. He is partially supported by a J. C. Bose Fellowship.

\end{document}